\newtheorem{definition}{Definition}[section]
\newtheorem{theorem}[definition]{Theorem}
\newtheorem{lemma}[definition]{Lemma}
\newtheorem{corollary}[definition]{Corollary}
\newtheorem{remark}[definition]{Remark}
\newtheorem{proposition}[definition]{Proposition}
\begin{document} 

\title{\bf Raising and lowering maps \\
for tridiagonal pairs
}
\author{
Paul Terwilliger
}
\date{}
\maketitle
\begin{abstract} Let $V$ denote a nonzero finite-dimensional vector space. A tridiagonal pair on $V$ is an ordered pair $A, A^*$ of maps in ${\rm End}(V)$
such that (i) each of $A, A^*$ is diagonalizable; (ii) there exists an ordering $\lbrace V_i \rbrace_{i=0}^d$ of the eigenspaces of $A$ such that
$A^* V_i \subseteq V_{i-1} + V_i + V_{i+1}$ $(0 \leq i \leq d)$, where $V_{-1} =0$ and $V_{d+1}=0$;
(iii) there exists an ordering $\lbrace V^*_i \rbrace_{i=0}^\delta$ of the eigenspaces of $A^*$ such that
$A V^*_i \subseteq V^*_{i-1} + V^*_i + V^*_{i+1}$ $(0 \leq i \leq \delta)$, where $V^*_{-1} =0$ and $V^*_{\delta+1}=0$;
(iv) there does not exist a subspace $W \subseteq V$ such that $W \not=0$, $W\not=V$, $A W \subseteq W$, $A^*W \subseteq W$.
Assume that $A, A^*$ is a tridiagonal pair on $V$. It is known that $d=\delta$.
For $0 \leq i \leq d$ let $\theta_i$ (resp. $\theta^*_i$) denote the eigenvalue of $A$ (resp. $A^*$) for $V_i$ (resp. $V^*_i$).
By construction, there exist $R,F,L \in {\rm End}(V)$ such that $A=R+F+L$ and $R V^*_i \subseteq V^*_{i+1}$, $F V^*_i \subseteq V^*_i$, $LV^*_i \subseteq V^*_{i-1}$ $(0 \leq i \leq d)$.
For $0 \leq i \leq d$ define $U_i = (V^*_0 + V^*_1 + \cdots + V^*_i ) \cap (V_i + V_{i+1} + \cdots + V_d)$. It is known that the sum $V=\sum_{i=0}^d U_i$
is direct. By construction, there exists $\mathcal R, \mathcal L \in {\rm End}(V)$ such that $\mathcal R=A - \theta_i I$ and $\mathcal L= A^*-\theta^*_i I$ on $U_i$ $(0 \leq i \leq d)$.
 It is known that $\mathcal R U_i \subseteq U_{i+1}$ and $\mathcal L U_i \subseteq U_{i-1}$ $(0 \leq i \leq d)$, where $U_{-1}=0$ and $U_{d+1}=0$.
 In this paper, our main goal is to describe how $R,F,L,\mathcal R, \mathcal L$ are related. We also give some results concerning  injectivity/surjectivity and $R, L$.
 \bigskip

\noindent
{\bf Keywords}.  Leonard pair; Leonard system; split decomposition; tridiagonal relations.
\hfil\break
\noindent {\bf 2020 Mathematics Subject Classification}.
Primary: 15A21. Secondary: 05E30.
 \end{abstract}
 
\section{Introduction}  This paper is about a linear algebraic object called a tridiagonal pair. The tridiagonal pair concept was formally introduced in \cite{TD00},
although it informally shows up earlier in connection with 
 the subconstituent algebra of a $Q$-polynomial distance-regular graph \cite{tSub1, tSub2,tSub3}.
Before we state our purpose, let us recall the definition of a tridiagonal pair.
\medskip

\noindent Let $\mathbb  F$ denote a field, and let $V$ denote a nonzero finite-dimensional vector space over $\mathbb F$. 
Let the algebra ${\rm End}(V)$ consist of the $\mathbb F$-linear maps from $V$ to $V$.
\begin{definition}
\label{def:TDP} 
\rm (See \cite[Definition~1.1]{TD00}.)
A {\it tridiagonal pair on $V$} is an ordered pair $A, A^*$ of maps in ${\rm End}(V)$
that satisfy the following conditions {\rm (i)--(iv)}:
\begin{enumerate}
\item[\rm (i)] each of $A, A^*$ is diagonalizable; 
\item[\rm (ii)] there exists an ordering $\lbrace V_i \rbrace_{i=0}^d$ of the eigenspaces of $A$ such that
\begin{align}
A^* V_i \subseteq V_{i-1} + V_i + V_{i+1}\qquad \qquad (0 \leq i \leq d),
\label{eq:TDA}
\end{align}
 where $V_{-1} =0$ and $V_{d+1}=0$;
\item[\rm (iii)] there exists an ordering $\lbrace V^*_i \rbrace_{i=0}^\delta$ of the eigenspaces of $A^*$ such that
\begin{align}
A V^*_i \subseteq V^*_{i-1} + V^*_i + V^*_{i+1}\qquad \qquad (0 \leq i \leq \delta),
\label{eq:TDAs}
\end{align}
 where $V^*_{-1} =0$ and $V^*_{\delta+1}=0$;
\item[\rm (iv)]
 there does not exist a subspace $W \subseteq V$ such that $W \not=0$, $W\not=V$, $A W \subseteq W$, $A^*W \subseteq W$.
\end{enumerate}
\end{definition}
\noindent For the rest of this section, let $A,A^*$ denote a tridiagonal pair on $V$ as in Definition \ref{def:TDP}. By \cite[Lemma~4.5]{TD00} the parameters $d, \delta$ from
Definition \ref{def:TDP} are equal;  
 their common value is called the {\it diameter} of the pair. 
 For $0 \leq i \leq d$ let $\theta_i$ (resp. $\theta^*_i$) denote the eigenvalue of $A$ (resp. $A^*$) for the eigenspace $V_i$ (resp. $V^*_i$).
 \medskip

\noindent 
In the study of $A, A^*$ there are several types of maps in ${\rm End}(V)$ that get called a raising map or lowering map.
Some examples are described in the next two paragraphs.
\medskip

\noindent  By Definition \ref{def:TDP}(iii), there exist $R,F,L \in {\rm End}(V)$ such that $A=R+F+L$
and 
\begin{align*}
R V^*_i \subseteq V^*_{i+1}, \qquad \quad F V^*_i \subseteq V^*_i, \qquad \quad LV^*_i \subseteq V^*_{i-1} \qquad \qquad (0 \leq i \leq d).
\end{align*}
It is natural to call $R,F,L$ a raising map, flat map, and lowering map, respectively. These maps come up naturally in the context of
distance-regular graphs \cite{uniform1, uniform2,dickie, dickie2, jtgo, uniformMT, worawannotai}.
In this context, the equation $A=R+F+L$ is sometimes called the quantum decomposition of $A$; see \cite[Definition~2.24]{hora} and \cite[Section~4]{zitnik}.
\medskip

\noindent  For $0\leq i \leq d$ define
\begin{align*}
U_i = (V^*_0 + V^*_1 + \cdots + V^*_i) \cap (V_i + V_{i+1} + \cdots + V_d).
\end{align*}
By \cite[Theorem~4.6]{TD00}  the sum $V=\sum_{i=0}^d U_i$ is direct. This sum is often called a split decomposition of $V$.
By \cite[Theorem~4.6]{TD00} we have
\begin{align}
&(A-\theta_i I ) U_i \subseteq U_{i+1} \qquad \quad (0 \leq i \leq d-1), \qquad \quad (A-\theta_d I) U_d =0, \label{eq:Araise}
\\
&(A^*-\theta^*_i I ) U_i \subseteq U_{i-1} \qquad \quad (1 \leq i \leq d), \qquad \quad (A^*-\theta^*_0 I) U_0 =0. \label{eq:Aslower}
\end{align}
By construction, there exist $\mathcal R, \mathcal L \in {\rm End}(V)$ such that for $0 \leq i \leq d$ the following hold on $U_i$:
\begin{align*}
\mathcal R = A - \theta_i I,\qquad \qquad \mathcal L = A^*- \theta^*_i I.
\end{align*}
By this and \eqref{eq:Araise}, \eqref{eq:Aslower} we have
\begin{align*}
&\mathcal R U_i \subseteq U_{i+1} \qquad \quad (0 \leq i \leq d-1), \qquad \quad \mathcal R U_d =0, 
\\
& \mathcal L U_i \subseteq U_{i-1} \qquad \quad (1 \leq i \leq d), \qquad \quad \mathcal L U_0 =0. 
\end{align*}
It is natural to call $\mathcal R$ (resp. $\mathcal L$) a raising map (resp. lowering map).
These maps have been used to (i)  obtain bounds on the shape vector of $A, A^*$ \cite{shape, nomShape}; (ii) turn $V$ into a module for the quantum group $U_q({\widehat{\mathfrak{sl}}}_2)$ 
\cite{tdanduqsl2hat, tdqrac};
(iii) construct the Drinfeld polynomial of $A, A^*$ \cite{TerDrinfeld, augIto}; (iv) construct the Bockting double-down operator $\psi$ \cite{twocom, bockting2, bockTer,bockting3}.
\medskip

\noindent In this paper, our main goal is to describe how $R,F,L, \mathcal R, \mathcal L$ are related. This description has the following features. In Theorem \ref{thm:ggee}, we give
some relations that involve $R,F,L$. In Theorems \ref{thm:m1}--\ref{thm:m3}, we use commuting diagrams to describe how $R,F,L$ are related to $\mathcal R, \mathcal L$.
In Theorems \ref{thm:rest}, \ref{thm:restDual} we give some relations that involve $\mathcal R, \mathcal L$.
\medskip

\noindent  We also have some results concerning  injectivity/surjectivity and $R, L$. To motivate these results we cite some prior work.  By \cite[Lemma~6.5]{TD00} the
following hold for $0 \leq i\leq j\leq d$:
\begin{enumerate}
\item[$\bullet$] the map ${\mathcal R}^{j-i}: U_i \to U_j$ is injective if $i+j\leq d$, bijective if $i+j=d$, and surjective if $i+j\geq d$;
\item[$\bullet$]  the map ${\mathcal L}^{j-i}: U_j \to U_i$ is injective if $i+j\geq d$, bijective if $i+j=d$, and surjective if $i+j\leq d$.
\end{enumerate}
In Theorem \ref{thm:bij2}, we show that the following hold for $0 \leq i\leq j\leq d$:
\begin{enumerate}
\item[$\bullet$] the map $R^{j-i}: V^*_i \to V^*_j$ is injective if $i+j\leq d$, bijective if $i+j=d$, and surjective if $i+j\geq d$;
\item[$\bullet$]  the map $L^{j-i}: V^*_j \to V^*_i$ is injective if $i+j\geq d$, bijective if $i+j=d$, and surjective if $i+j\leq d$.
\end{enumerate}
Near the end of the paper, we examine two special cases in extra detail. 
In the first special case, we  assume that $V_i$ and $V^*_i$ have dimension one for $0 \leq i \leq d$. In this case, the pair $A, A^*$ is
called a Leonard pair. In the second case, we assume that $\theta_i = d-2i$ and $\theta^*_i = d-2i$ for $0 \leq i \leq d$. In this case,
the pair $A, A^*$ is said to have Krawtchouk type.
\medskip

\noindent The paper is organized as follows. Section 2 contains some preliminaries.
In Section 3, we recall the concept of a tridiagonal system $\Phi$ and give some basic facts about it.
In Section 4, we define the maps $R,F,L$ and describe their basic properties.
In Section 5, we describe how $R,F,L$ are related.
In Section 6, we recall the $\Phi$-split decomposition of $V$.
In Section 7, we use the $\Phi$-split decomposition to define the maps $\mathcal R, \mathcal L$.
In Section 8, we describe how $R,F,L$ are related to $\mathcal R, \mathcal L$.
In Section 9, we describe how $\mathcal R, \mathcal L$ are related.
In Section 10, we give some results involving injectivity/surjectivity and $R, L$.
In Section 11, we  describe what our earlier
results become under the assumption that $\Phi$ is a Leonard system.
In Section 12, we  describe what our earlier
results become under the assumption that $\Phi$ has Krawtchouk type.

\section{Preliminaries}
We now begin our formal argument. Throughout the paper, we will use
the following concepts and notational conventions.
 For an integer $d\geq 0$ a sequence $x_0, x_1, \ldots, x_d$ will be denoted by $\lbrace x_i \rbrace_{i=0}^d$.
Let $\mathbb F$ denote a field. 
Every vector space that we encounter is understood to be over $\mathbb F$. Every algebra that we encounter is 
understood to be associative, over $\mathbb F$, and have a multiplicative identity.
A subalgebra has the same multiplicative identity as the parent algebra.
Throughout the paper, $V$ denotes a nonzero  finite-dimensional vector space. The algebra ${\rm End}(V)$ 
consists of the $\mathbb F$-linear maps from $V$ to $V$. Let $I \in {\rm End}(V)$ denote the identity map.
An element $A \in {\rm End}(V)$ is said to be {\it diagonalizable} whenever
$V$ is spanned by the eigenspaces of $A$. Assume that $A$ is diagonalizable, and let $\lbrace V_i \rbrace_{i=0}^d$ 
denote an ordering of the eigenspaces of $A$. By linear algebra, the sum $V=\sum_{i=0}^d V_i$ is direct. For $0 \leq i \leq d$ let $\theta_i$ denote the eigenvalue of $A$ for $V_i$.
For $0 \leq i \leq d$ define $E_i \in {\rm End}(V)$ such that
$(E_i-I) V_i=0$ and $E_iV_j=0$ if $j \not=i$ $(0 \leq j \leq d)$. Thus $E_i$ is the projection from $V$ onto $V_i$. We call $E_i$ the {\it primitive idempotent} of $A$ associated with $V_i$ (or $\theta_i$).
We have:
(i) $E_i E_j = \delta_{i,j} E_i$ $ (0 \leq i,j\leq d)$;
(ii) $I = \sum_{i=0}^d E_i$; 
(iii) $V_i = E_iV$ $ (0 \leq i \leq d)$; 
(iv) $A = \sum_{i=0}^d \theta_i E_i$; 
(v) $AE_i = \theta_i E_i = E_iA$ $(0 \leq i \leq d)$. Moreover
\begin{align*}
  E_i=\prod_{\stackrel{0 \leq j \leq d}{j \neq i}}
          \frac{A-\theta_jI}{\theta_i-\theta_j} \qquad \qquad (0 \leq i \leq d).
\end{align*}
Let $M$ denote the subalgebra of ${\rm End}(V)$ generated by $A$. 
The vector space $M$ has a basis $\lbrace A^i \rbrace_{i=0}^d $ and a basis $\lbrace E_i \rbrace_{i=0}^d$.
For elements $X,Y$ in any algebra, their commutator is $\lbrack X,Y \rbrack = XY-YX$.

\section{Tridiagonal systems}
In our discussion of tridiagonal pairs, it is convenient to bring in the concept of a tridiagonal system. We will define this concept shortly. 
 Let $A, A^*$ denote a tridiagonal pair on $V$.
An ordering of the eigenspaces of $A$ (resp. $A^*$)
is said to be
{\it standard} whenever it satisfies
(\ref{eq:TDA}) (resp. 
(\ref{eq:TDAs})). 
Let 
 $\lbrace V_i \rbrace_{i=0}^d$ denote a standard ordering
of the eigenspaces of $A$.
By \cite[Lemma~2.4]{TD00}, the inverted ordering 
 $\lbrace V_{d-i} \rbrace_{i=0}^d$ 
is also standard and no further ordering is standard.
A similar result holds for the 
 eigenspaces of $A^*$. An ordering of the primitive idempotents of $A$ (resp. $A^*$) is said to be {\it standard} whenever
 the corresponding ordering of the eigenspaces of $A$ (resp. $A^*$) is standard.

\begin{definition} \rm (See  \cite[Definition~2.1]{TD00}, \cite[Definition~2.1]{nomsharp}.)
 \label{def:TDsystem} 
A  {\it tridiagonal system on $V$} is a sequence
\begin{align*}
 \Phi=(A;\lbrace E_i\rbrace_{i=0}^d;A^*;\lbrace E^*_i\rbrace_{i=0}^d)
\end{align*}
that satisfies the following three conditions:
\begin{enumerate}
\item[\rm (i)]
$A,A^*$ is a tridiagonal pair on $V$;
\item[\rm (ii)]
$\lbrace E_i\rbrace_{i=0}^d$ is a standard ordering
of the primitive idempotents of $A$;
\item[\rm (iii)]
$\lbrace E^*_i\rbrace _{i=0}^d$ is a standard ordering
of the primitive idempotents of $A^*$.
\end{enumerate}
\end{definition}

\noindent For the rest of this paper, we fix a tridiagonal system $\Phi = ( A; \lbrace E_i \rbrace_{i=0}^d; A^*; \lbrace E^*_i \rbrace_{i=0}^d )$ on $V$.
To avoid trivialities, we always assume that $d\geq 1$.
\medskip

\noindent For the rest of this section, we review some basic facts about $\Phi$. More information can be found in \cite[Chapter~6.2]{bbit} and \cite{class,TD00,augIto,nomTB, nomTowards, qSerre, madrid, terAlt, int}.
\begin{lemma} {\rm (See \cite[Lemma~3.6]{terAlt}.)}
\label{lem:EAEp}
The following hold for $0 \leq i,j \leq d$:
\begin{enumerate}
\item[\rm (i)]  $E^*_i AE^*_j = \begin{cases} 0 & {\rm if}\; \vert i - j \vert >1, \\
                                                                      \not=0 & {\rm if} \; \vert i - j \vert = 1
                                                                      \end{cases}$
\item[\rm (ii)]   $E_i A^* E_j = \begin{cases} 0 & {\rm if}\; \vert i - j \vert >1, \\
                                                                      \not=0 & {\rm if} \; \vert i - j \vert = 1.
                                                                      \end{cases}$
\end{enumerate}
\end{lemma}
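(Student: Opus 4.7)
The plan is to treat the vanishing claim and the nonvanishing claim separately, and then observe that part (ii) follows from part (i) by duality (swapping the roles of $A,A^*$ and of starred/unstarred objects; the definition of tridiagonal pair is symmetric in this exchange). So the real work is in part (i).

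For the vanishing direction of (i), I would just unpack the tridiagonal inclusion from Definition \ref{def:TDP}(iii). Since $E^*_j V = V^*_j$, we have $A E^*_j V \subseteq V^*_{j-1} + V^*_j + V^*_{j+1}$. Because $E^*_i$ is the projection onto $V^*_i$ along the sum of the other $V^*_k$, it annihilates $V^*_{j-1}+V^*_j+V^*_{j+1}$ whenever $i \notin \{j-1,j,j+1\}$, that is, whenever $|i-j|>1$. This is immediate and requires no further input.

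For the nonvanishing direction, the key tool is the irreducibility condition, Definition \ref{def:TDP}(iv). I would argue by contradiction, splitting into two symmetric cases according to whether one supposes $E^*_{i+1} A E^*_i = 0$ for some $0 \le i \le d-1$, or $E^*_{i-1} A E^*_i = 0$ for some $1 \le i \le d$. In the first case, set
\begin{align*}
W = V^*_0 + V^*_1 + \cdots + V^*_i.
\end{align*}
Then $W$ is $A^*$-invariant since each summand is an eigenspace of $A^*$. To see $AW \subseteq W$, use Definition \ref{def:TDP}(iii): for $j<i$ we have $AV^*_j \subseteq V^*_{j-1}+V^*_j+V^*_{j+1} \subseteq W$, while for $j=i$ the inclusion $AV^*_i \subseteq V^*_{i-1}+V^*_i+V^*_{i+1}$ combined with the assumption $E^*_{i+1}A E^*_i = 0$ (which is exactly the statement that the $V^*_{i+1}$-component of $AV^*_i$ is zero) yields $AV^*_i \subseteq V^*_{i-1}+V^*_i \subseteq W$. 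Since $0 \neq W$ and $W \neq V$ (because $V^*_{i+1} \neq 0$ is not contained in $W$), this violates (iv). The second case is analogous, using $W = V^*_i + V^*_{i+1} + \cdots + V^*_d$.

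The main obstacle is the nonvanishing direction; the vanishing direction is formal. The crux is recognizing that the hypothesis $E^*_{i+1}AE^*_i = 0$ (respectively $E^*_{i-1}A E^*_i = 0$) is precisely what is needed to upgrade the $A^*$-invariant flag piece $V^*_0+\cdots+V^*_i$ (respectively $V^*_i+\cdots+V^*_d$) into an $A$-invariant subspace, so that condition (iv) can be invoked. Once this observation is made, the argument is short.
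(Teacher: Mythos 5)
Your proof is correct: the vanishing part follows from the tridiagonal inclusion \eqref{eq:TDAs} exactly as you say, and the nonvanishing part via the $A$- and $A^*$-invariant subspaces $V^*_0+\cdots+V^*_i$ and $V^*_i+\cdots+V^*_d$ contradicting Definition \ref{def:TDP}(iv) is the standard argument used in the cited source. The paper itself gives no proof here (it defers to \cite[Lemma~3.6]{terAlt}), but your argument is essentially the one found there, including the reduction of (ii) to (i) by the $A\leftrightarrow A^*$ symmetry of the definition.
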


\noindent We recall the shape of $\Phi$. By \cite[Corollary~5.7]{TD00}, for $0 \leq i \leq d$ the subspaces $E_iV$, $E_{d-i}V$, $E^*_iV$, $E^*_{d-i}V$ have the same dimension; 
we denote this common dimension by $\rho_i$. By construction $\rho_i \not=0$ and $\rho_i = \rho_{d-i}$. By  \cite[Corollary~6.6]{TD00} we have
\begin{align}
\rho_{i-1} \leq \rho_i \qquad \qquad  (1 \leq i \leq d/2). \label{eq:unimodal}
\end{align}
The sequence $\lbrace \rho_i \rbrace_{i=0}^d$ is called the {\it shape} of $\Phi$.
\medskip

\begin{definition}
\label{def:eigenval}
\rm (See \cite[Definition~3.1]{TD00}.)
 For $0 \leq i \leq d$ let $\theta_i$ (resp. $\theta^*_i$) denote the eigenvalue of $A$ (resp. $A^*$) for $E_i$ (resp. $E^*_i$).
 By construction, the scalars $\lbrace \theta_i \rbrace_{i=0}^d$ are contained in $\mathbb F$ and mutually distinct.
 Similarly, the scalars $\lbrace \theta^*_i \rbrace_{i=0}^d$ are contained in $\mathbb F$ and mutually distinct.
 We call the sequence $\lbrace \theta_i \rbrace_{i=0}^d$ (resp. $\lbrace \theta^*_i \rbrace_{i=0}^d$) the
 {\it eigenvalue sequence} (resp. {\it dual eigenvalue sequence}) of $\Phi$. 
\end{definition}

\noindent By construction,
\begin{align}
&A  = \sum_{i=0}^d \theta_i E_i, \qquad \qquad A^*= \sum_{i=0}^d \theta^*_i E^*_i.
\label{eq:AAs}
\end{align}

\begin{lemma} 
{\rm (See \cite[Theorem~11.1]{TD00}.)}
The scalars
\begin{align*}
\frac {\theta_{i-2}-\theta_{i+1}}{\theta_{i-1}-\theta_i}, \qquad \qquad 
 \frac{\theta^*_{i-2}-\theta^*_{i+1}}{\theta^*_{i-1}-\theta^*_i}
\end{align*}
are equal and independent of $i$ for $2 \leq i \leq d-1$.
\end{lemma}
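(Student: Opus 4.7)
My plan is to deduce the identity from a common three-term linear recurrence satisfied by both $\lbrace \theta_i \rbrace_{i=0}^d$ and $\lbrace \theta^*_i \rbrace_{i=0}^d$. I would first establish that there exist scalars $\beta, \gamma, \gamma^* \in \mathbb{F}$ such that
\begin{align*}
\theta_{i-1} - \beta\theta_i + \theta_{i+1} &= \gamma, \\
\theta^*_{i-1} - \beta\theta^*_i + \theta^*_{i+1} &= \gamma^*,
\end{align*}
for $1 \leq i \leq d-1$, with the \emph{same} coefficient $\beta$ in both. The standard route is via the tridiagonal (Askey--Wilson-type) relations: there exist scalars $\beta, \gamma, \gamma^*, \varrho, \varrho^*$ such that
\begin{align*}
\lbrack A,\ A^2A^* - \beta A A^* A + A^* A^2 - \gamma(AA^* + A^*A) - \varrho A^* \rbrack &= 0, \\
\lbrack A^*,\ A^{*2}A - \beta A^* A A^* + A A^{*2} - \gamma^*(A^*A + AA^*) - \varrho^* A \rbrack &= 0.
\end{align*}
Extracting the $E_{i-1}(\cdot)E_i$ and $E_{i+1}(\cdot)E_i$ block components of the first relation and using Lemma \ref{lem:EAEp}(ii) to cancel the nonzero factors $E_{i \pm 1} A^* E_i$, one obtains the symmetric quadratic identity $\theta_j^2 - \beta\theta_j\theta_i + \theta_i^2 - \gamma(\theta_j + \theta_i) - \varrho = 0$ for $j = i \pm 1$. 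Viewed as a quadratic in $x$ with parameter $\theta_i$, its two roots are thus $\theta_{i-1}$ and $\theta_{i+1}$ (distinct, since the $\theta_j$ are mutually distinct), and Vieta's formula gives the first recurrence. The dual calculation, starting from the second tridiagonal relation, gives the $\theta^*$-recurrence with the same $\beta$.

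Once the recurrences are in hand, the remainder is one line of algebra. For $2 \leq i \leq d-1$, subtract the first recurrence at index $i-1$ from the same recurrence at index $i$:
\[ (\theta_{i-1} - \beta\theta_i + \theta_{i+1}) - (\theta_{i-2} - \beta\theta_{i-1} + \theta_i) = \gamma - \gamma = 0. \]
Regrouping gives $\theta_{i-2} - \theta_{i+1} = (\beta+1)(\theta_{i-1} - \theta_i)$, and since the $\theta_j$ are mutually distinct (Definition \ref{def:eigenval}) the denominator is nonzero, so
\[ \frac{\theta_{i-2} - \theta_{i+1}}{\theta_{i-1} - \theta_i} = \beta + 1. \]
Repeating verbatim with the dual recurrence yields $\frac{\theta^*_{i-2} - \theta^*_{i+1}}{\theta^*_{i-1} - \theta^*_i} = \beta + 1$. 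Both ratios equal the constant $\beta + 1$, so they agree with each other and are independent of $i$.

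The main obstacle is the existence of the common-$\beta$ tridiagonal relations. This is the nontrivial content established in \cite{TD00}; its proof uses the irreducibility condition Definition \ref{def:TDP}(iv) in an essential way, and the fact that the scalar $\beta$ is shared between the two relations is precisely what links the $A$-spectrum to the $A^*$-spectrum and forces the two ratios in the statement to agree. Everything after that---the quadratic-to-linear step via Vieta's and the telescoping subtraction---is elementary.
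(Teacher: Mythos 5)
Your proof is correct. The paper itself gives no argument for this lemma---it is quoted from \cite[Theorem~11.1]{TD00}---and your derivation (extracting the $E_{i\pm 1}(\cdot)E_i$ blocks of the tridiagonal relations, cancelling the nonzero factors $E_{i\pm1}A^*E_i$ via Lemma \ref{lem:EAEp}(ii), using Vieta to get the recurrences $\theta_{i-1}-\beta\theta_i+\theta_{i+1}=\gamma$ and $\theta^*_{i-1}-\beta\theta^*_i+\theta^*_{i+1}=\gamma^*$ with a common $\beta$, and telescoping to obtain the common value $\beta+1$) is exactly the content of that cited result, which the paper records as Lemma \ref{lem:beta}; the only ingredient you take on faith is the existence of the tridiagonal relations themselves (Lemma \ref{lem:fiveP}), which the paper likewise cites without proof.
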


\noindent Next we recall the tridiagonal relations.
\begin{lemma} \label{lem:fiveP}
{\rm (See  \cite[Theorem~10.1]{TD00}.)} 
There exists a sequence $\beta, \gamma, \gamma^*, \varrho, \varrho^*$ of scalars in $\mathbb F$ such that
\begin{align}
0 &=\lbrack A,A^2A^*-\beta AA^*A + 
A^*A^2 -\gamma (AA^*+A^*A)-\varrho A^*\rbrack,           \label{eq:TD1}
\\
0 &= \lbrack A^*,A^{*2}A-\beta A^*AA^* + AA^{*2} -\gamma^* (A^*A+AA^*)-
\varrho^* A\rbrack. \label{eq:TD2}
\end{align}
The sequence  $\beta, \gamma, \gamma^*, \varrho, \varrho^*$ is unique if $d\geq 3$.
\end{lemma}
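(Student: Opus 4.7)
The plan is to derive the tridiagonal relations from Lemma~\ref{lem:EAEp} and the preceding lemma on constancy of the eigenvalue ratio. I will treat \eqref{eq:TD1} in detail; the relation \eqref{eq:TD2} follows symmetrically after swapping the roles of $A$ and $A^*$ and invoking Lemma~\ref{lem:EAEp}(i) in place of (ii). Set $X := A^2A^* - \beta AA^*A + A^*A^2 - \gamma(AA^* + A^*A) - \varrho A^*$. Sandwiching $X$ between primitive idempotents $E_i, E_j$ of $A$ and using $AE_k = \theta_k E_k = E_k A$, every power of $A$ collapses to a scalar and I obtain
\begin{align*}
E_i [A, X] E_j = (\theta_i - \theta_j)\, p(\theta_i, \theta_j)\, E_i A^* E_j,
\end{align*}
where $p(x,y) = x^2 + y^2 - \beta xy - \gamma(x + y) - \varrho$. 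Since $I = \sum_i E_i$, the relation \eqref{eq:TD1} is equivalent to the vanishing of these blocks for all $i, j$. By Lemma~\ref{lem:EAEp}(ii), $E_i A^* E_j = 0$ for $|i-j| > 1$ and is nonzero for $|i-j| = 1$, while $(\theta_i - \theta_j)$ kills the diagonal. So \eqref{eq:TD1} reduces to the $d$ scalar conditions $p(\theta_{i-1}, \theta_i) = 0$ for $1 \leq i \leq d$.

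Next I solve this system for $\beta, \gamma, \varrho$. For $d \leq 2$ there are at most two equations in three unknowns and existence is trivial. For $d \geq 3$, subtracting consecutive equations and dividing by the nonzero $\theta_{i-1} - \theta_{i+1}$ gives $\beta \theta_i + \gamma = \theta_{i-1} + \theta_{i+1}$ for $1 \leq i \leq d-1$, and one more subtraction yields
\begin{align*}
(\beta + 1)(\theta_i - \theta_{i+1}) = \theta_{i-1} - \theta_{i+2} \qquad (1 \leq i \leq d-2).
\end{align*}
The preceding lemma says the resulting ratio is $i$-independent, so $\beta$ is forced; then $\gamma := \theta_{i-1} + \theta_{i+1} - \beta\theta_i$ and $\varrho := \theta_{i-1}^2 + \theta_i^2 - \beta\theta_{i-1}\theta_i - \gamma(\theta_{i-1} + \theta_i)$ are each independent of $i$ and give a solution, the $i$-independence being verified by running the subtraction calculation backwards. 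The same analysis applied to \eqref{eq:TD2} produces $\gamma^*, \varrho^*$ and, via the $\theta^*$-analog of the constant ratio, forces exactly the same $\beta$; this equality of the two ratios is precisely the content of the previous lemma, so the scalar $\beta$ is shared between \eqref{eq:TD1} and \eqref{eq:TD2} as the statement requires.

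For uniqueness when $d \geq 3$: if two triples $(\beta, \gamma, \varrho)$ satisfy the scalar system, their difference lies in the kernel of the homogeneous $d \times 3$ matrix with rows $(\theta_{i-1}\theta_i,\ \theta_{i-1} + \theta_i,\ 1)$ for $1 \leq i \leq d$; any three such rows give a Vandermonde-style $3 \times 3$ determinant that factors into a nonzero product of differences of the pairwise distinct $\theta_k$, so the kernel is trivial and the difference vanishes. The same argument handles $\gamma^*, \varrho^*$. The one real obstacle in this plan is the consistency of the overdetermined scalar system when $d \geq 3$: the constant-ratio lemma quoted just above is precisely the input that averts inconsistency and simultaneously identifies the shared parameter $\beta$ in the two relations; every other step is routine linear algebra.
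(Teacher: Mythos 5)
Your argument is correct, and it is worth noting that the paper itself offers no proof of this lemma at all: it simply cites \cite[Theorem~10.1]{TD00}. What you have done is supply a self-contained derivation from the paper's earlier stated results, and it holds up. The block computation $E_i[A,X]E_j=(\theta_i-\theta_j)p(\theta_i,\theta_j)E_iA^*E_j$ with $p(x,y)=x^2+y^2-\beta xy-\gamma(x+y)-\varrho$ is right, and combined with Lemma \ref{lem:EAEp}(ii) it correctly reduces \eqref{eq:TD1} to the $d$ conditions $p(\theta_{i-1},\theta_i)=0$; this is in effect a proof of the equivalence that the paper records afterward as Lemma \ref{lem:beta}, so your route amounts to proving Lemma \ref{lem:beta} first and then reading off Lemma \ref{lem:fiveP} from the constant-ratio lemma (which supplies both the existence of $\beta$ for $d\geq 3$ and the fact that the same $\beta$ serves in \eqref{eq:TD1} and \eqref{eq:TD2}). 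The consecutive-subtraction bookkeeping and the verification that $\gamma$ and $\varrho$ are $i$-independent are all correct. One small nitpick on uniqueness: you assert that \emph{any} three rows $(\theta_{i-1}\theta_i,\ \theta_{i-1}+\theta_i,\ 1)$ give a nonzero determinant, which you have not actually checked for non-consecutive choices; but the argument only needs one good choice, and for three consecutive rows the determinant visibly factors as $(\theta_{i+1}-\theta_{i-1})(\theta_{i+1}-\theta_i)(\theta_i-\theta_{i+2})\neq 0$, so uniqueness of $(\beta,\gamma,\varrho)$, and likewise of $(\beta,\gamma^*,\varrho^*)$, follows. The only external input you use beyond linear algebra is the preceding lemma on the equality and constancy of the eigenvalue ratios, which the paper also takes from \cite{TD00}; granting that, your proof is complete.
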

\noindent The relations \eqref{eq:TD1}, \eqref{eq:TD2} are called the {\it tridiagonal relations}; see \cite[Section~3]{qSerre}.

\begin{remark}
\label{rem:expand}
\rm The tridiagonal relations \eqref{eq:TD1}, \eqref{eq:TD2} look as follows in expanded form:
\begin{align}
\begin{split} 
&A^3 A^* - (\beta+1) A^2 A^* A + (\beta+1) A A^* A^2 - A^* A^3  \\
& \quad = \gamma (A^2 A^*-A^* A^2) + \varrho (A A^*-A^* A), 
\end{split} \label{eq:TD1Ex} \\
\begin{split}
& A^{*3} A - (\beta+1) A^{*2} A A^* + (\beta+1) A^* A A^{*2} - A A^{*3} \\
& \quad = \gamma^* (A^{*2} A-A A^{*2}) + \varrho^* (A^* A - A A^*).
 \end{split} \label{eq:TD2Ex}
\end{align}
\end{remark}

\noindent In the next result, we explain how the sequence $\beta, \gamma, \gamma^*, \varrho, \varrho^*$ from Lemma \ref{lem:fiveP} is
related to the eigenvalue sequence and dual eigenvalue sequence of $\Phi$.

\begin{lemma} \label{lem:beta} 
{\rm (See \cite[Theorem~11.1]{TD00}.)}
Let $\beta, \gamma, \gamma^*, \varrho, \varrho^* \in \mathbb F$.
The sequence  $\beta, \gamma, \gamma^*, \varrho, \varrho^* $ satisfies 
 \eqref{eq:TD1}, \eqref{eq:TD2}
 if and only if the following {\rm (i)--(iii)} hold.
\begin{enumerate}
\item[\rm (i)] For $2 \leq i \leq d-1$,
\begin{align}
\beta+1&=\frac {\theta_{i-2}-\theta_{i+1}}{\theta_{i-1}-\theta_i}= 
 \frac{\theta^*_{i-2}-\theta^*_{i+1}}{\theta^*_{i-1}-\theta^*_i}.       \label{eq:beta}
 \end{align}
 \item[\rm (ii)] For $1 \leq i \leq d-1$,
\begin{align} 
\gamma &= \theta_{i-1}-\beta \theta_i + \theta_{i+1},  \label{eq:gamma}
\\
\gamma^* &= \theta^*_{i-1}-\beta \theta^*_i + \theta^*_{i+1}.  \label{eq:gammas}
\end{align}
\item[\rm (iii)]
For $1 \leq i \leq d$,
\begin{align*}
\varrho&= \theta^2_{i-1}-\beta \theta_{i-1}\theta_i+\theta_i^2-\gamma (\theta_{i-1}+\theta_i),       
\\
\varrho^*&= \theta^{*2}_{i-1}-\beta \theta^*_{i-1}\theta^*_i+\theta_i^{*2}-
\gamma^* (\theta^*_{i-1}+\theta^*_i).           
\end{align*}
\end{enumerate}
\end{lemma}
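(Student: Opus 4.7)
My plan is to sandwich the tridiagonal relations in their expanded form \eqref{eq:TD1Ex}, \eqref{eq:TD2Ex} between primitive idempotents of $A$ and $A^*$, and exploit Lemma \ref{lem:EAEp} to convert the operator identities into scalar identities on the eigenvalues. Concretely, for \eqref{eq:TD1Ex} I would compute $E_i \cdot (\text{LHS}-\text{RHS}) \cdot E_j$ using $AE_i=\theta_iE_i$ and $E_jA=\theta_jE_j$, which collapses to
\[
\Bigl(\theta_i^3-(\beta+1)\theta_i^2\theta_j+(\beta+1)\theta_i\theta_j^2-\theta_j^3-\gamma(\theta_i^2-\theta_j^2)-\varrho(\theta_i-\theta_j)\Bigr)\,E_iA^*E_j.
\]
The polynomial factor equals $(\theta_i-\theta_j)\bigl(\theta_i^2-\beta\theta_i\theta_j+\theta_j^2-\gamma(\theta_i+\theta_j)-\varrho\bigr)$.

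For the forward direction, I assume \eqref{eq:TD1}, \eqref{eq:TD2} and use $\sum_iE_i=I$ to expand each side of \eqref{eq:TD1Ex} as $\sum_{i,j}E_i\cdot E_j$. By Lemma \ref{lem:EAEp}(ii) only the terms with $|i-j|\le 1$ survive, and for such $(i,j)$ the factor $E_iA^*E_j$ is nonzero when $|i-j|=1$. Combined with the fact that the $\theta_i$ are mutually distinct (Definition \ref{def:eigenval}), this forces
\[
\theta_i^2-\beta\theta_i\theta_j+\theta_j^2=\gamma(\theta_i+\theta_j)+\varrho \quad\text{whenever }|i-j|=1.
\]
Taking $j=i-1$ (with $1\le i\le d$) yields (iii) for $\varrho$. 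Taking the difference of this identity for $j=i+1$ and $j=i-1$ and dividing by the nonzero factor $\theta_{i+1}-\theta_{i-1}$ gives (ii) for $\gamma$. Finally, writing (ii) at index $i$ and at index $i-1$ and subtracting yields $\theta_{i-2}-\theta_{i+1}=(\beta+1)(\theta_{i-1}-\theta_i)$, which is (i) on the $\theta$ side. Applying the parallel argument to \eqref{eq:TD2Ex}, with the roles of $A,A^*$ swapped and Lemma \ref{lem:EAEp}(i) in place of (ii), produces the $\theta^*$ versions of (ii), (iii) and the matching $\theta^*$ form of (i); equating the two expressions for $\beta+1$ gives the second equality in \eqref{eq:beta}.

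For the converse, I assume (i)--(iii) and read the computation backwards. Condition (iii) is exactly the scalar identity above for $|i-j|=1$, so for every pair $(i,j)$ with $0\le i,j\le d$ the sandwiched factor $E_i(\text{LHS}-\text{RHS})E_j$ vanishes: it is automatic when $|i-j|\ge 2$ by Lemma \ref{lem:EAEp}(ii), automatic when $i=j$ because of the $\theta_i-\theta_j$ factor, and follows from (iii) when $|i-j|=1$. Summing over $i,j$ using $I=\sum_iE_i$ recovers \eqref{eq:TD1Ex}, and the dual computation gives \eqref{eq:TD2Ex}. Along the way I should verify that (ii) guarantees the right-hand side of (iii) is in fact independent of $i$, so that $\varrho$ (and likewise $\varrho^*$) is well-defined: subtracting the expression at $i$ from the one at $i+1$ produces $(\theta_{i-1}-\theta_{i+1})(\theta_{i-1}+\theta_{i+1}-\beta\theta_i-\gamma)$, which is zero by (ii).

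The main obstacle is bookkeeping rather than a deep difficulty: one must be careful that (i) is only asserted for $2\le i\le d-1$, (ii) for $1\le i\le d-1$, and (iii) for $1\le i\le d$, and check that these ranges are exactly what the spectral argument produces and consumes. A second subtlety is that the distinctness of the $\theta_i$ (used to cancel the prefactor $\theta_i-\theta_j$ and to cancel $\theta_{i+1}-\theta_{i-1}$) is essential, and I would cite Definition \ref{def:eigenval} explicitly for this; the same distinctness for $\theta^*_i$ handles the dual side.
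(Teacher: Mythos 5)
Your proof is correct: the idempotent-sandwiching computation, the extraction of (iii) from the pairs $|i-j|=1$ via Lemma \ref{lem:EAEp} and the distinctness of the eigenvalues, the differencing that yields (ii) and then (i), and the reversal for the converse all check out, with the index ranges handled properly. The paper itself gives no proof of this lemma (it only cites \cite[Theorem~11.1]{TD00}), and your argument is essentially the standard one used in that reference, so there is nothing to compare beyond noting that you have supplied the details the paper omits.
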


\noindent For the rest of the paper, we fix a sequence $\beta, \gamma, \gamma^*, \varrho, \varrho^*$ of scalars in $\mathbb F$
that satisfies  \eqref{eq:TD1}, \eqref{eq:TD2}.
\medskip

\noindent The following definition is for notational convenience.

\begin{definition}
\label{def:extend}
\rm Define the scalars $\theta_{-1}$, $\theta_{d+1}$, $\theta^*_{-1}$, $\theta^*_{d+1}$ such that  \eqref{eq:gamma}, \eqref{eq:gammas} hold at $i=0$ and $i=d$.
\end{definition}

\section{The maps $R,F,L$}
We continue to discuss the tridiagonal system $\Phi = ( A; \lbrace E_i \rbrace_{i=0}^d; A^*; \lbrace E^*_i \rbrace_{i=0}^d )$ on $V$.
In this section, we use $\Phi$ to define some maps $R, F, L \in {\rm End}(V)$. We describe some basic properties of $R,F,L$.
\medskip

\noindent The following definition is motivated by \cite[Section~1.5]{dickie}.

\begin{definition}
\label{def:RFL} 
\rm Define
\begin{align*}
R = \sum_{i=0}^{d-1} E^*_{i+1} A E^*_i, \qquad \quad
F= \sum_{i=0}^d E^*_i A E^*_i, \qquad \quad
L = \sum_{i=1}^d E^*_{i-1} A E^*_i.
\end{align*}
We call $R,F, L$ the {\it $\Phi$-raising map}, the {\it $\Phi$-flat map}, and the {\it $\Phi$-lowering map}, respectively.
\end{definition}

\begin{lemma}
\label{lem:ARFL}
We have
\begin{align*}
A = R + F + L.
\end{align*}
\end{lemma}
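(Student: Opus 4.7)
The plan is to obtain the identity $A = R + F + L$ by decomposing $A$ using the resolution of the identity provided by the primitive idempotents $\{E^*_i\}_{i=0}^d$ of $A^*$, and then invoking Lemma \ref{lem:EAEp}(i) to discard all terms with $|i-j|>1$.

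First, I would recall from the preliminaries that $I = \sum_{i=0}^d E^*_i$ and $E^*_i E^*_j = \delta_{i,j} E^*_i$ for $0 \le i,j \le d$. Multiplying $A$ on the left and right by this resolution of the identity yields
\begin{align*}
A \;=\; I A I \;=\; \sum_{i=0}^d \sum_{j=0}^d E^*_i A E^*_j.
\end{align*}

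Next, I would apply Lemma \ref{lem:EAEp}(i), which tells us that $E^*_i A E^*_j = 0$ whenever $|i-j|>1$. This collapses the double sum to the three bands $j=i$, $j=i-1$, and $j=i+1$:
\begin{align*}
A \;=\; \sum_{i=0}^d E^*_i A E^*_i \;+\; \sum_{i=0}^{d-1} E^*_{i+1} A E^*_i \;+\; \sum_{i=1}^d E^*_{i-1} A E^*_i.
\end{align*}
Matching each of these three sums against Definition \ref{def:RFL} identifies them as $F$, $R$, and $L$ respectively, giving $A = R + F + L$.

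This argument is essentially a one-liner once Lemma \ref{lem:EAEp}(i) is in hand, so there is no real obstacle; the only thing to be careful about is the bookkeeping of indices (the shift by $1$ in the definitions of $R$ and $L$), which must be checked to confirm that $R$ collects exactly the $j=i+1$ band and $L$ collects exactly the $j=i-1$ band and that the endpoint conventions $E^*_{-1} = 0 = E^*_{d+1}$ are implicit in the stated index ranges.
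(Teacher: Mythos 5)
Your proof is correct and is exactly the argument the paper intends: the paper's proof is the one-line citation ``By Lemma \ref{lem:EAEp} and Definition \ref{def:RFL},'' and your expansion of $A = IAI$ over the resolution of the identity, followed by discarding the bands with $|i-j|>1$, is precisely what that citation abbreviates.
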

\begin{proof} By Lemma
\ref{lem:EAEp}
and Definition \ref{def:RFL}.
\end{proof}

\begin{lemma}
\label{lem:RFLmeaning}
We have
\begin{align*}
&R E^*_iV \subseteq E^*_{i+1}V \quad (0 \leq i \leq d-1), \qquad \quad R E^*_dV=0, \\
&F E^*_iV \subseteq E^*_iV \quad (0 \leq i \leq d), \\
&L E^*_iV \subseteq E^*_{i-1}V \quad (1 \leq i \leq d), \qquad \quad LE^*_0V=0.
\end{align*}
\end{lemma}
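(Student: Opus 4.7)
The plan is to read off everything directly from Definition \ref{def:RFL} together with the orthogonality of the primitive idempotents $E^*_0, \ldots, E^*_d$. Recall that $E^*_j E^*_i = \delta_{j,i} E^*_i$ and that the image of $E^*_i$ is $E^*_i V$.

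For the first line, fix $v \in E^*_i V$, so $v = E^*_i v$. Using the definition
\[
R = \sum_{j=0}^{d-1} E^*_{j+1} A E^*_j,
\]
I would compute $Rv = \sum_{j=0}^{d-1} E^*_{j+1} A E^*_j E^*_i v$. By orthogonality, only the $j=i$ term survives, and it survives only when $0 \leq i \leq d-1$; in that case $Rv = E^*_{i+1} A E^*_i v$, which lies in the image of $E^*_{i+1}$, i.e.\ in $E^*_{i+1} V$. When $i = d$, no term contributes, so $R E^*_d V = 0$.

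The arguments for $F$ and $L$ are structurally identical: applying the defining sums to $v = E^*_i v$ and invoking $E^*_j E^*_i = \delta_{j,i} E^*_i$ collapses each sum to a single term (or to zero in the boundary case $i = 0$ for $L$), and the leftmost factor $E^*_k$ then forces the output into $E^*_k V$ for the appropriate $k \in \{i, i-1\}$. There is no real obstacle here; the only thing to watch is that the summation ranges in Definition \ref{def:RFL} are exactly what is needed to produce the stated boundary behavior $R E^*_d V = 0$ and $L E^*_0 V = 0$.
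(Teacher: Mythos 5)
Your proposal is correct and is exactly the argument the paper intends: the paper's proof is simply ``By Definition \ref{def:RFL},'' and your expansion—writing $v=E^*_iv$, using $E^*_jE^*_i=\delta_{j,i}E^*_i$ to collapse each defining sum to one term (or to zero at the boundaries), and noting the leftmost idempotent forces the image into the correct eigenspace—is the standard way to fill in that citation.
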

\begin{proof} By Definition 
\ref{def:RFL}.
\end{proof}

\begin{lemma} \label{lem:RLnil}
For $0 \leq i \leq d$ we have
\begin{align*}
  R^{d-i+1} E^*_i =0, \qquad \qquad         L^{i+1} E^*_i =0.
\end{align*}
\noindent Moreover,
\begin{align*}
R^{d+1}=0,\qquad \qquad L^{d+1}=0.
\end{align*}
\end{lemma}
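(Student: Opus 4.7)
The plan is to derive both statements by straightforward iteration of the containments supplied by Lemma \ref{lem:RFLmeaning}, combined with the fact that $A^*$ is diagonalizable so that $I = \sum_{j=0}^d E^*_j$.

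First I would establish, by induction on $k$, the containment
\begin{align*}
R^{k} E^*_i V \subseteq E^*_{i+k} V \qquad (0 \leq i \leq d,\ 0 \leq k \leq d-i).
\end{align*}
The base case $k=0$ is trivial, and the inductive step is immediate from the first line of Lemma \ref{lem:RFLmeaning}. Taking $k = d-i$ yields $R^{d-i} E^*_i V \subseteq E^*_d V$, and applying $R$ once more and using $R E^*_d V = 0$ from the same lemma gives $R^{d-i+1} E^*_i = 0$. The dual argument, using the last line of Lemma \ref{lem:RFLmeaning}, shows $L^{k} E^*_i V \subseteq E^*_{i-k} V$ for $0 \leq k \leq i$, and then $L^{i+1} E^*_i V \subseteq L E^*_0 V = 0$.

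For the moreover part, note that since $A^*$ is diagonalizable we have $I = \sum_{j=0}^d E^*_j$, so
\begin{align*}
R^{d+1} = \sum_{j=0}^d R^{d+1} E^*_j = \sum_{j=0}^d R^{j}\bigl(R^{d-j+1} E^*_j\bigr) = 0,
\end{align*}
using the first part, and an identical computation gives $L^{d+1} = 0$.

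There is really no obstacle here, only bookkeeping; the only point that might be mistyped is the splitting $R^{d+1} = R^j \cdot R^{d-j+1}$ so that the already-established nilpotency factor $R^{d-j+1} E^*_j = 0$ can be applied, and the analogous splitting $L^{d+1} = L^{d-j}\cdot L^{j+1}$ for the lowering map.
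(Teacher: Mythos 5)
Your proof is correct and follows exactly the route the paper intends: the paper's own proof is simply the citation ``By Lemma \ref{lem:RFLmeaning},'' and your argument is the straightforward expansion of that citation, iterating the containments and using $I=\sum_{j=0}^d E^*_j$ for the global nilpotency. No issues.
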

\begin{proof} By Lemma \ref{lem:RFLmeaning}.
\end{proof}

\begin{lemma}
\label{lem:EsRFL}
We have
\begin{align*}
&E^*_{i+1}AE^*_i = R E^*_i = E^*_{i+1} R \qquad \qquad (0 \leq i \leq d-1), \\
&E^*_i A E^*_i = F E^*_i = E^*_i F \qquad \qquad (0 \leq i \leq d), \\
&E^*_{i-1} A E^*_i = L E^*_i = E^*_{i-1} L \qquad \qquad (1 \leq i \leq d).
\end{align*}
\end{lemma}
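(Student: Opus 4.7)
The plan is to verify each of the three displayed equalities by a direct computation that relies only on Definition~\ref{def:RFL} together with the orthogonality relation $E^*_j E^*_k = \delta_{j,k} E^*_j$ for the primitive idempotents of $A^*$. Since each of $R$, $F$, $L$ is defined as a sum of sandwiched terms $E^*_{j+1}AE^*_j$, $E^*_jAE^*_j$, or $E^*_{j-1}AE^*_j$, multiplying by an appropriate $E^*_i$ on either side will collapse the sum to a single term by orthogonality.

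First I would handle the $R$ identities. Right-multiply the definition $R = \sum_{j=0}^{d-1} E^*_{j+1} A E^*_j$ by $E^*_i$; since $E^*_j E^*_i = \delta_{j,i} E^*_i$, only the $j=i$ term survives (valid for $0 \le i \le d-1$), giving $RE^*_i = E^*_{i+1} A E^*_i$. Left-multiply the definition by $E^*_{i+1}$; since $E^*_{i+1} E^*_{j+1} = \delta_{j,i} E^*_{i+1}$, again only the $j=i$ term survives, giving $E^*_{i+1} R = E^*_{i+1} A E^*_i$. Combining the two yields the first line. The same pattern, applied to $F = \sum_{j=0}^{d} E^*_j A E^*_j$ with multipliers $E^*_i$ on either side, yields the middle line, and applied to $L = \sum_{j=1}^{d} E^*_{j-1} A E^*_j$ with $E^*_i$ on the right and $E^*_{i-1}$ on the left, yields the third line.

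There is no real obstacle here; the only thing to be careful about is the range of the index $i$ in each identity, which must match the range of summation in the definition so that the surviving term $E^*_{i+1} A E^*_i$, $E^*_i A E^*_i$, or $E^*_{i-1} A E^*_i$ is actually one of the terms defining $R$, $F$, or $L$. With these ranges as stated in the lemma ($0 \le i \le d-1$, $0 \le i \le d$, and $1 \le i \le d$ respectively), everything matches.
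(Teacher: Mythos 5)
Your proof is correct and matches the paper's approach: the paper's own proof is simply ``By Definition \ref{def:RFL},'' and your computation (collapsing the defining sums via the orthogonality $E^*_jE^*_k=\delta_{j,k}E^*_j$) is exactly the intended verification, with the index ranges handled properly.
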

\begin{proof} By Definition 
\ref{def:RFL}.
\end{proof}

\section{How $R,F,L$ are related}
We continue to discuss the tridiagonal system $\Phi = ( A; \lbrace E_i \rbrace_{i=0}^d; A^*; \lbrace E^*_i \rbrace_{i=0}^d )$ on $V$.
In this section, we use the tridiagonal relations to obtain some relations involving $R,F,L$. 
\medskip

\noindent The following result
is motivated by \cite[Lemmas~5.5,~5.6]{tSub3}.

\begin{theorem} 
\label{thm:ggee}
The following {\rm (i)--(iii)} hold.
\begin{enumerate}
\item[\rm (i)] For $2 \leq i \leq d$ the equation
\begin{align}
g^-_i FL^2 + LFL + g^+_i L^2 F = \gamma L^2         \label{eq:LLF}
\end{align}
holds on $E^*_iV$ and the equation
\begin{align}
g^-_i R^2 F  + RFR + g^+_i FR^2  = \gamma R^2        \label{eq:RRF}
\end{align}
holds on $E^*_{i-2}V$,
where
\begin{align}
g^+_i  = \frac{\theta^*_i - \theta^*_{i+1}}{\theta^*_i - \theta^*_{i-2}}, \qquad \qquad
g^-_i = \frac{\theta^*_{i-2} - \theta^*_{i-3}}{ \theta^*_{i-2} - \theta^*_i}. 
\label{eq:gpm}
\end{align}
\item[\rm (ii)] For $1 \leq i \leq d$ the equation
\begin{align*}
e^-_i RL^2 + (\beta+2) LRL + e^+_i L^2 R + LF^2 -\beta FLF  + F^2 L = \gamma (LF+FL) + \varrho L
\end{align*}
holds on $E^*_iV$ and the equation
\begin{align*}
e^-_i R^2 L + (\beta+2) RLR + e^+_i LR^2 + F^2R -\beta FRF  + RF^2  = \gamma (FR+RF) + \varrho R
\end{align*}
holds on $E^*_{i-1}V$,
where
\begin{align}
e^+_i = \frac{\theta^*_i - \theta^*_{i+2}}{\theta^*_i - \theta^*_{i-1}} \quad  (1 \leq i \leq d-1), \qquad \quad 
e^-_i = \frac{\theta^*_{i-1}-\theta^*_{i-3}}{\theta^*_{i-1} - \theta^*_i} \quad  (2 \leq i \leq d)
\label{eq:epm}
\end{align}
and $e^+_d, e^-_1$ are indeterminates.
\item[\rm (iii)]  For $0 \leq i \leq d$ the equation
\begin{align*}
(\theta^*_i-\theta^*_{i+1}) \lbrack F, LR \rbrack = (\theta^*_{i-1} - \theta^*_i) \lbrack F, RL\rbrack
\end{align*}
holds on $E^*_iV$.
\end{enumerate}
\end{theorem}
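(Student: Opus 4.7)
The plan is to extract homogeneous "shift" components from the tridiagonal relation~\eqref{eq:TD1Ex}. By Lemma~\ref{lem:RFLmeaning}, each of $R, F, L$ has a definite shift ($+1, 0, -1$ respectively) with respect to the decomposition $V = \bigoplus_{i=0}^d E^*_iV$, so every word in $R, F, L$ has a total shift, and every shift-$s$ operator carries $E^*_iV$ into $E^*_{i+s}V$. The main technique is: apply~\eqref{eq:TD1Ex} to $v \in E^*_iV$, use $A^* E^*_iV = \theta^*_i E^*_iV$ to pull the scalar $\theta^*_j$ through each occurrence of $A^*$, substitute $A = R+F+L$, and project the resulting identity onto each summand $E^*_{i+s}V$. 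Each value of $s$ produces a separate identity among the shift-$s$ words in $R, F, L$.

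For~(i), take the shift-$(-2)$ projection on $E^*_iV$ (with $2 \le i \le d$). Only $A^3$-type terms contribute on the LHS, and the only shift-$(-2)$ degree-$3$ words in $R, F, L$ are $FL^2, LFL, L^2F$. On the RHS, $\varrho(AA^* - A^*A)$ has no shift-$(-2)$ component (as $A$ has shift at most $1$), and $\gamma(A^2A^* - A^*A^2)$ contributes exactly $\gamma(\theta^*_i - \theta^*_{i-2})L^2$. After dividing by the nonzero scalar $\theta^*_i - \theta^*_{i-2}$, the $\beta$-coefficients in front of $FL^2$ and $L^2F$ simplify, via~\eqref{eq:beta}, to $g^-_i$ and $g^+_i$ of~\eqref{eq:gpm}, producing~\eqref{eq:LLF}. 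Identity~\eqref{eq:RRF} is the symmetric shift-$(+2)$ extraction applied on $E^*_{i-2}V$.

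For~(ii), the shift-$(-1)$ projection on $E^*_iV$ (with $1 \le i \le d$) produces six degree-$3$ words on the LHS, namely $RL^2, LRL, L^2R, LF^2, FLF, F^2L$, while the RHS contributes $\gamma(\theta^*_i - \theta^*_{i-1})(LF+FL) + \varrho(\theta^*_i - \theta^*_{i-1})L$. Dividing by $\theta^*_i - \theta^*_{i-1}$, the coefficient of $LRL$ simplifies to $\beta+2$ and that of $FLF$ to $-\beta$ directly; the coefficients of $L^2R$ and $RL^2$ become $e^+_i$ and $e^-_i$ of~\eqref{eq:epm} after one further application of~\eqref{eq:beta}. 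At $i = d$ the word $L^2R$ acts as zero on $E^*_dV$ (since $RE^*_dV = 0$), and at $i = 1$ the word $RL^2$ acts as zero on $E^*_1V$ (since $LE^*_0V = 0$), which is why $e^+_d$ and $e^-_1$ are indeterminate. The $R$-version follows from the shift-$(+1)$ extraction on $E^*_{i-1}V$.

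Part~(iii) is the most delicate. The shift-$0$ projection on $E^*_iV$ in principle involves seven degree-$3$ words ($F^3$ together with the six orderings of $\{R, L, F\}$), but severe cancellations occur. The contributions from $A^3A^*$ and $A^*A^3$ are identical and cancel; the $\gamma, \varrho$ terms on the RHS both vanish at shift $0$ because the shift-$0$ parts of $A^kA^*$ and $A^*A^k$ agree on $E^*_iV$ (since $A^*$ acts there by the single scalar $\theta^*_i$); and in the remaining $(\beta+1)(AA^*A^2 - A^2A^*A)$ the coefficients of $F^3$, $LFR$, and $RFL$ each drop out. What survives is
\[
(\beta+1)\bigl\{(\theta^*_i - \theta^*_{i+1})[F, LR] + (\theta^*_i - \theta^*_{i-1})[F, RL]\bigr\} = 0
\]
on $E^*_iV$. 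Since $\beta+1 \ne 0$ (for $d \ge 3$ this follows from~\eqref{eq:beta} together with the distinctness of the $\theta^*_j$; the cases $d \le 2$ can be verified directly), dividing by $\beta+1$ yields~(iii). The principal obstacle is the bookkeeping in this last case: recognizing that the seven shift-$0$ words collapse into precisely the two commutators $[F, LR]$ and $[F, RL]$ with the claimed coefficients is the crux of the calculation.
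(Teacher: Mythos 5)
Your proposal is correct and follows essentially the same route as the paper: the paper's proof also extracts the shift-$s$ components of the expanded tridiagonal relation by sandwiching it between dual idempotents (e.g.\ computing $0=E^*_{i-2}CE^*_i$ for part (i)), replacing each $A^*$ by the scalar $\theta^*_j$ of the intermediate eigenspace, and simplifying the resulting coefficients via \eqref{eq:beta}. Your extra care in part (iii) about dividing by $\beta+1$ (automatic for $d\geq 3$, and for $d\leq 2$ handled by noting $\beta$ is then a free parameter that may be chosen $\neq -1$) is a point the paper glosses over, but it does not change the argument.
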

\begin{proof} (i) We first check \eqref{eq:LLF}. Let $C$ denote the left-hand side of \eqref{eq:TD1Ex} minus
the right-hand side of \eqref{eq:TD1Ex}. Thus $C=0$, so 
\begin{align}
0 = E^*_{i-2} C E^*_i. 
\label{eq:C}
\end{align}
We evaluate the terms in \eqref{eq:C}. Observe that 
the following hold on $E^*_iV$:
\begin{align*}
E^*_{i-2} A^3 A^* E^*_i &=  \theta^*_i FL^2  +    \theta^*_i L F L      +   \theta^*_i L^2 F,     \\
E^*_{i-2} A^2 A^* AE^*_i &=  \theta^*_{i-1} FL^2  +    \theta^*_{i-1} L F L      +   \theta^*_i L^2 F,     \\
E^*_{i-2} A A^* A^2 E^*_i &=  \theta^*_{i-2} FL^2  +    \theta^*_{i-1} L F L      +   \theta^*_{i-1} L^2 F,     \\
E^*_{i-2} A^* A^3  E^*_i &=  \theta^*_{i-2} FL^2  +    \theta^*_{i-2} L F L      +   \theta^*_{i-2} L^2 F,     \\
E^*_{i-2} A^2 A^* E^*_i &= \theta^*_i L^2, \qquad \qquad  E^*_{i-2} A^* A^2  E^*_i = \theta^*_{i-2} L^2,  \\
E^*_{i-2} A A^* E^*_i &=0, \qquad \qquad \qquad E^*_{i-2} A^* A E^*_i=0.
\end{align*}
Evaluating \eqref{eq:C} 
using the above comments,  we routinely obtain \eqref{eq:LLF}.
We obtain \eqref{eq:RRF} in a similar manner. \\
\noindent (ii), (iii) Similar to the proof of (i) above.
\end{proof}

\begin{lemma} 
\label{lem:ggee}
We have
\begin{align*}
g^+_i &\not=0 \quad  (2 \leq i \leq d-1),  \qquad \qquad 
g^-_i \not=0  \quad (3 \leq i \leq d),  \\
e^+_i &\not=0 \quad  (1 \leq i \leq d-2), \qquad \qquad 
e^-_i \not=0  \quad (3 \leq i \leq d).
\end{align*}
\end{lemma}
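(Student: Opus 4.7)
The plan is to reduce each claim to the fact, recorded in Definition \ref{def:eigenval}, that the dual eigenvalues $\theta^*_0, \theta^*_1, \ldots, \theta^*_d$ are mutually distinct. In each of the four assertions, the scalar in question is a ratio of differences of dual eigenvalues, and the issue is simply to check that the numerator is of the form $\theta^*_a - \theta^*_b$ with $a \neq b$ and both indices lying in $\{0, 1, \ldots, d\}$, so that the mutual distinctness applies.

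Concretely, I would handle the four cases in turn. For $g^+_i$ with $2 \leq i \leq d-1$, the numerator from \eqref{eq:gpm} is $\theta^*_i - \theta^*_{i+1}$, and both indices $i, i+1$ lie in $\{0, \ldots, d\}$; for $g^-_i$ with $3 \leq i \leq d$, the numerator is $\theta^*_{i-2} - \theta^*_{i-3}$ with $i-3, i-2 \in \{0, \ldots, d\}$. For $e^+_i$ with $1 \leq i \leq d-2$, the numerator from \eqref{eq:epm} is $\theta^*_i - \theta^*_{i+2}$ with $i, i+2 \in \{0, \ldots, d\}$; for $e^-_i$ with $3 \leq i \leq d$, the numerator is $\theta^*_{i-1} - \theta^*_{i-3}$ with $i-3, i-1 \in \{0, \ldots, d\}$. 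In every case the two indices are distinct and in range, so the numerator is nonzero by Definition \ref{def:eigenval}.

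One should also note that in all four ranges the corresponding denominator is likewise a difference of two $\theta^*$'s with distinct indices in $\{0, \ldots, d\}$, so the ratios are well-defined and the conclusion $\neq 0$ makes sense. There is no substantive obstacle here; the care required is purely bookkeeping, namely matching the specified bounds on $i$ against the indices appearing in the numerators of \eqref{eq:gpm} and \eqref{eq:epm} to confirm that no index escapes the range $\{0, 1, \ldots, d\}$ where distinctness is guaranteed. (In particular, the extended values $\theta^*_{-1}, \theta^*_{d+1}$ from Definition \ref{def:extend} are never invoked.)
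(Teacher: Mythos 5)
Your proof is correct and is essentially the paper's own argument: the paper's proof is precisely the observation that, by \eqref{eq:gpm}, \eqref{eq:epm}, each numerator is a difference of two dual eigenvalues with distinct indices in $\{0,\ldots,d\}$, which are mutually distinct by Definition \ref{def:eigenval}. Your index bookkeeping (and the remark that the stated ranges are exactly those avoiding the extended values $\theta^*_{-1}$, $\theta^*_{d+1}$) checks out.
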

\begin{proof} By \eqref{eq:gpm}, \eqref{eq:epm} and since $\lbrace \theta^*_\ell\rbrace_{\ell=0}^d$ are mutually distinct.
\end{proof}

\section{The $\Phi$-split decomposition  of $V$}
We continue to discuss the tridiagonal system $\Phi = ( A; \lbrace E_i \rbrace_{i=0}^d; A^*; \lbrace E^*_i \rbrace_{i=0}^d )$ on $V$.
In this section we recall the $\Phi$-split decomposition of $V$. Background information about this topic can be found in 
\cite[Sections~4--6]{TD00} and \cite{twocom, bockting2, bockTer, bockting3,  shape, tdanduqsl2hat, tdqrac, augIto, nomSplit, nomShape, nomsharp, terAlt}.
\medskip

\noindent For $0 \leq i \leq d$ define
\begin{align*}
U_i = \bigl( E^*_0V + E^*_1V+\cdots + E^*_iV \bigr) \cap \bigl( E_iV + E_{i+1}V + \cdots + E_dV\bigr).
\end{align*}
By \cite[Theorem~4.6]{TD00}  the sum $V=\sum_{i=0}^d U_i$ is direct.
Also by \cite[Theorem~4.6]{TD00}, 
\begin{align} \label{eq:usum1}
&U_0 + U_1 + \cdots + U_i = E^*_0V + E^*_1V + \cdots + E^*_iV \qquad \qquad (0 \leq i \leq d), \\
&U_i + U_{i+1} + \cdots + U_d = E_i V + E_{i+1}V + \cdots + E_dV \qquad \qquad (0 \leq i \leq d). \label{eq:usum2}
\end{align}
By \cite[Corollary~5.7]{TD00} the subspace $U_i$ has dimension $\rho_i$ for $0 \leq i \leq d$.
We call the sequence $\lbrace U_i \rbrace_{i=0}^d$ the {\it $\Phi$--split decomposition of $V$}.
By \cite[Theorem~4.6]{TD00}  we have 
\begin{align*}
&(A-\theta_i I ) U_i \subseteq U_{i+1} \qquad (0 \leq i \leq d-1), \qquad \quad (A-\theta_d I)U_d=0, \\
&(A^*-\theta^*_i I ) U_i \subseteq U_{i-1} \qquad (1 \leq i \leq d), \qquad \quad (A^*-\theta^*_0 I)U_0=0.
\end{align*}

\begin{definition}
\label{def:Fi}
\rm (See \cite[Definition~5.2]{TD00}.)
For $0 \leq i \leq d$ define $F_i \in {\rm End}(V)$ such that $(F_i - I ) U_i=0$ 
and $F_i U_j = 0$ if $j \not=i $ $(0 \leq j \leq d)$.
\end{definition}

\begin{lemma}
\label{lem:factsFi}
{\rm (See \cite[Lemma~5.3]{TD00}.)} 
We have $U_i = F_iV$ for $0 \leq i \leq d$. Moreover,
\begin{align*}
 F_i F_j = \delta_{i,j} F_i \quad (0 \leq i,j\leq d), \qquad \qquad I = \sum_{i=0}^d F_i.
\end{align*}
\end{lemma}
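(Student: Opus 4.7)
The statement is a purely formal consequence of the direct sum decomposition $V = \bigoplus_{i=0}^d U_i$ recalled in the previous paragraph from \cite[Theorem~4.6]{TD00}. The plan is to unwind Definition \ref{def:Fi} and then verify the three assertions one at a time.

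First I would note that the direct sum $V = \sum_{i=0}^d U_i$ means every $v \in V$ has a unique expression $v = \sum_{j=0}^d u_j$ with $u_j \in U_j$. The two defining conditions $(F_i - I)U_i = 0$ and $F_i U_j = 0$ for $j \neq i$ pin down $F_i$ uniquely on each summand, hence, by linear extension, on all of $V$; concretely, $F_i v = u_i$. This establishes that $F_i$ is well defined, a point that is implicit in Definition \ref{def:Fi} but worth recording.

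To prove $U_i = F_iV$, observe that for any $v = \sum_{j=0}^d u_j \in V$ we have $F_i v = u_i \in U_i$, giving $F_i V \subseteq U_i$. Conversely, if $u \in U_i$, then $(F_i - I)u = 0$ by definition, so $u = F_i u \in F_i V$, giving the reverse inclusion.

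For the relation $F_i F_j = \delta_{i,j} F_i$, take $v \in V$ and set $w = F_j v$. By the previous paragraph $w \in U_j$. If $i \neq j$, then $F_i w = 0$ since $F_i U_j = 0$; if $i = j$, then $F_i w = w = F_j v$ since $F_i$ acts as the identity on $U_i$. Hence $F_i F_j = \delta_{i,j} F_i$ as maps on $V$.

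Finally, for $I = \sum_{i=0}^d F_i$, decompose $v = \sum_{j=0}^d u_j$ with $u_j \in U_j$; then $F_i u_j = \delta_{i,j} u_j$ by the two defining conditions, so $\sum_{i=0}^d F_i v = \sum_{i,j} F_i u_j = \sum_{i=0}^d u_i = v$. The main (in fact only) obstacle is bookkeeping: making sure to cite the directness of $V = \sum_i U_i$ at the right moments so that the definition of $F_i$ is unambiguous and the expansion of $v$ into its $U_j$-components is legitimate.
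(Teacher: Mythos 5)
Your proof is correct: it is the standard verification that the maps $F_i$ are the projections associated with the direct sum $V=\bigoplus_{i=0}^d U_i$, and every step checks out. The paper itself gives no proof for this lemma, simply citing \cite[Lemma~5.3]{TD00}, and your argument is exactly the routine one that reference supplies, so nothing further is needed.
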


\noindent The next three lemmas are well known; we give short proofs for the sake of completeness.
\begin{lemma}
\label{lem:FE}
{\rm (See \cite[Lemma~5.4]{TD00}.)} We have
\begin{align*}
F_j E^*_i  = 0, \qquad \qquad E^*_j F_i=0, \qquad \qquad (0 \leq i < j \leq d).
\end{align*}
\end{lemma}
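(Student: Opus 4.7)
The plan is to derive both identities directly from the defining formula of $U_i$ together with the two summation identities \eqref{eq:usum1}, \eqref{eq:usum2} for the $\Phi$-split decomposition.

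First I would prove $F_j E^*_i = 0$ for $i < j$. Fix $v \in V$. By \eqref{eq:usum1}, we have
\begin{align*}
E^*_i V \subseteq E^*_0 V + E^*_1 V + \cdots + E^*_i V = U_0 + U_1 + \cdots + U_i,
\end{align*}
so $E^*_i v$ can be written as $\sum_{k=0}^i u_k$ with $u_k \in U_k$. Since $j > i$, the projector $F_j$ kills each $U_k$ appearing in this sum (it is the identity on $U_j$ and zero on $U_k$ for $k \neq j$ by Definition \ref{def:Fi}), so $F_j E^*_i v = 0$.

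Next I would prove $E^*_j F_i = 0$ for $i < j$. By Lemma \ref{lem:factsFi} and the definition of $U_i$,
\begin{align*}
F_i V = U_i \subseteq E^*_0 V + E^*_1 V + \cdots + E^*_i V.
\end{align*}
Thus for any $v \in V$, $F_i v = \sum_{k=0}^i E^*_k w_k$ for suitable $w_k \in V$. Since the primitive idempotents of $A^*$ satisfy $E^*_j E^*_k = \delta_{j,k} E^*_k$ and $j > i \geq k$, we obtain $E^*_j F_i v = 0$.

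The argument is essentially a bookkeeping exercise; there is no real obstacle. The only point to get right is the direction of each inclusion: for the first identity we use that $E^*_i V$ sits inside the ``low end'' of the $U$-filtration, and for the second that $F_i V = U_i$ sits inside the ``low end'' of the $E^*$-filtration. Both of these are immediate from \eqref{eq:usum1}.
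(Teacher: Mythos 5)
Your proof is correct and follows essentially the same route as the paper's: both arguments rest on the identification $E^*_0V+\cdots+E^*_iV = U_0+\cdots+U_i = F_0V+\cdots+F_iV$ from \eqref{eq:usum1} and Lemma \ref{lem:factsFi}, followed by the orthogonality of the idempotents $\lbrace F_k\rbrace$ (resp. $\lbrace E^*_k\rbrace$). No issues.
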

\begin{proof} We have $F_j E^*_i=0$ because
\begin{align*}
F_j E^*_iV & \subseteq F_j (E^*_0V+E^*_1V+\cdots + E^*_iV) \\
                 &= F_j (F_0V + F_1V + \cdots + F_iV) \\
                 &=0.
                 \end{align*}
 We have $E^*_j F_i=0$ because
\begin{align*}
 E^*_j F_iV & \subseteq E^*_j  (F_0V+ F_1V + \cdots + F_iV) \\
                 &= E^*_j (E^*_0V+E^*_1V+\cdots + E^*_iV) \\
                                 &=0.
                 \end{align*}
\end{proof}

\begin{lemma}
\label{lem:FEF}
{\rm (See \cite[Lemma~5.5]{TD00}.)} We have
\begin{align*}
F_i E^*_i F_i= F_i, \qquad \qquad E^*_i F_i E^*_i =E^*_i, \qquad \qquad (0 \leq i \leq d).
\end{align*}
\end{lemma}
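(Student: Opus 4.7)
The plan is to expand the identity map in two different ways and use the vanishing identities from Lemma \ref{lem:FE} to collapse each sum to a single term.

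First I would record the two resolutions of the identity. From Lemma \ref{lem:factsFi} we have $I = \sum_{j=0}^d F_j$, and since $\{E^*_j\}_{j=0}^d$ are the primitive idempotents of $A^*$ we also have $I = \sum_{j=0}^d E^*_j$.

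For the first identity $F_i E^*_i F_i = F_i$, I would write
\begin{align*}
F_i = F_i \cdot I \cdot F_i = \sum_{j=0}^d F_i E^*_j F_i.
\end{align*}
By Lemma \ref{lem:FE}, $F_i E^*_j = 0$ whenever $j < i$ and $E^*_j F_i = 0$ whenever $j > i$. Thus every term with $j \neq i$ vanishes, leaving only $F_i E^*_i F_i$. The second identity $E^*_i F_i E^*_i = E^*_i$ is proved symmetrically by inserting $I = \sum_{j=0}^d F_j$ between the two copies of $E^*_i$ and applying the same vanishing relations to kill all terms with $j \neq i$.

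There is no real obstacle here; the entire argument rests on Lemma \ref{lem:FE} together with the two completeness relations, and it is purely formal. The write-up will be essentially the two displays above, one per identity.
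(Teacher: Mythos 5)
Your proof is correct and is essentially identical to the paper's: both insert the resolution of the identity $I=\sum_j E^*_j$ (resp. $I=\sum_j F_j$) between the two idempotents and use Lemma \ref{lem:FE} to annihilate every off-diagonal term. Nothing further is needed.
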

\begin{proof} We first obtain $F_i E^*_i F_i = F_i$. We have
\begin{align*}
F_i = F^2_i = F_i I F_i = \sum_{j=0}^d F_i E^*_j F_i.
\end{align*}
For $0 \leq j \leq d$ with $i \not=j$,  we have $F_i E^*_j=0$ if $i>j$ and $E^*_j F_i =0$ if $j>i$. Therefore $F_i E^*_j F_i=0$.
By these comments, $F_i E^*_i F_i = F_i$. The equation $E^*_i F_i E^*_i =E^*_i$ is similarly obtained.
\end{proof}

\begin{lemma} 
\label{lem:Bij}
{\rm (See \cite[Lemma~3.15]{terAlt}.)}
The following {\rm (i)--(iii)} hold.
\begin{enumerate}
\item[\rm (i)] The map $\sum_{\ell=0}^d F_\ell E^*_\ell: V \to V$ is a bijection that sends
\begin{align*}
E^*_iV \mapsto F_i V \qquad \qquad (0 \leq i \leq d).
\end{align*}
\item[\rm (ii)] 
The map $\sum_{\ell=0}^d E^*_\ell F_\ell: V \to V$ is a bijection that sends
\begin{align*}
F_iV \mapsto E^*_i V \qquad \qquad (0 \leq i \leq d).
\end{align*}
\item[\rm (iii)] The maps 
$\sum_{\ell=0}^d F_\ell E^*_\ell $ and $\sum_{\ell=0}^d E^*_\ell F_\ell$ are inverses.
\end{enumerate}
\end{lemma}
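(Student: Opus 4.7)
The plan is to set $X=\sum_{\ell=0}^d F_\ell E^*_\ell$ and $Y=\sum_{\ell=0}^d E^*_\ell F_\ell$, first establish (iii) by a short direct computation, and then deduce (i) and (ii) by restricting $X$ and $Y$ to the relevant summands.

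For (iii), I would compute $XY$ and $YX$ as double sums and collapse them using the orthogonality relations already available. Since $E^*_\ell E^*_m=\delta_{\ell,m}E^*_\ell$ and $F_\ell F_m=\delta_{\ell,m}F_\ell$ (by Lemma \ref{lem:factsFi}), the cross terms vanish and one gets
\begin{align*}
XY=\sum_{\ell=0}^d F_\ell E^*_\ell F_\ell,\qquad YX=\sum_{\ell=0}^d E^*_\ell F_\ell E^*_\ell.
\end{align*}
Each summand is then recognized via Lemma \ref{lem:FEF}: $F_\ell E^*_\ell F_\ell=F_\ell$ and $E^*_\ell F_\ell E^*_\ell=E^*_\ell$. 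Summing and applying $\sum_\ell F_\ell=I=\sum_\ell E^*_\ell$ yields $XY=I=YX$, which proves (iii) and in particular shows that $X$ and $Y$ are both bijections of $V$.

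For (i), I would evaluate $X$ on the summand $E^*_iV$. Using $E^*_\ell E^*_i=\delta_{\ell,i}E^*_i$, the sum defining $X$ collapses on $E^*_iV$ to a single term, giving $XE^*_i=F_iE^*_i$ and hence $X(E^*_iV)=F_iE^*_iV\subseteq F_iV$. For the reverse containment I would apply $F_iE^*_iF_i=F_i$ from Lemma \ref{lem:FEF}: this shows $F_iV=F_iE^*_iF_iV\subseteq F_iE^*_iV=X(E^*_iV)$. Thus $X$ sends $E^*_iV$ onto $F_iV$, and the injectivity is immediate from (iii). Statement (ii) is proved in the mirror-image way, evaluating $Y$ on $F_iV$ to obtain $YF_i=E^*_iF_i$ and using $E^*_iF_iE^*_i=E^*_i$.

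There is no real obstacle here; the proof is essentially formal once the two identities of Lemma \ref{lem:FEF} are in hand. The only point requiring any care is the bookkeeping of the double sums in the computation of $XY$ and $YX$, where one has to be careful to use the correct orthogonality relation ($E^*$ on the inside for $XY$ and $F$ on the inside for $YX$) before invoking Lemma \ref{lem:FEF}.
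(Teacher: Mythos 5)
Your proposal is correct and follows essentially the same route as the paper: prove (iii) first by collapsing the double sums with the orthogonality of the $E^*_\ell$ and of the $F_\ell$ and then invoking Lemma \ref{lem:FEF}, and deduce (i), (ii) from (iii). The only difference is that you spell out the containments $X(E^*_iV)=F_iV$ and $Y(F_iV)=E^*_iV$ explicitly, which the paper compresses into ``by (iii) and the construction.''
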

\begin{proof} It is convenient to prove (iii) first. \\
\noindent (iii) Observe that
\begin{align*}
\Biggl( \sum_{\ell=0}^d F_\ell E^*_\ell \Biggr) \Biggl( \sum_{k=0}^d E^*_k F_k\Biggr)
= \sum_{\ell=0}^d F_\ell E^*_\ell F_\ell = \sum_{\ell=0}^d F_\ell = I
\end{align*}
and also 
\begin{align*}
\Biggl( \sum_{k=0}^d E^*_k F_k \Biggr) \Biggl( \sum_{\ell=0}^d F_\ell E^*_\ell \Biggr)
= \sum_{\ell=0}^d E^*_\ell F_\ell E^*_\ell = \sum_{\ell=0}^d E^*_\ell = I.
\end{align*}
\noindent (i), (ii) By (iii) and the construction.
\end{proof}

\begin{corollary}
\label{cor:rFE}
For $0 \leq i \leq d$ we have
\begin{align}
&{\rm rank}\, F_i E^*_i ={\rm rank}\, E^*_i F_i=\rho_i, \label{eq:rFEs} \\
&{\rm rank}\, F_i E_i  ={\rm rank}\, E_i F_i=\rho_i.   \label{eq:rFE}
\end{align}
\end{corollary}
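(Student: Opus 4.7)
The plan is to prove all four rank statements by showing each of the maps $F_iE^*_i$, $E^*_iF_i$, $F_iE_i$, $E_iF_i$ is (up to restriction) a bijection between $\rho_i$-dimensional subspaces. The upper bound ${\rm rank}\leq \rho_i$ is automatic since each map factors through one of $F_iV=U_i$, $E^*_iV$, $E_iV$, all of dimension $\rho_i$; so the content lies in the matching lower bound.

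For \eqref{eq:rFEs} I would apply Lemma \ref{lem:Bij} directly. For $v\in E^*_iV$ one has $E^*_\ell v=\delta_{\ell,i}v$, so on $E^*_iV$ the bijection $\sum_{\ell=0}^d F_\ell E^*_\ell$ of Lemma \ref{lem:Bij}(i) collapses to $F_iE^*_i$. Since that sum carries $E^*_iV$ bijectively onto $F_iV$, the image of $F_iE^*_i$ is $F_iV$, giving rank $\rho_i$. A symmetric argument using Lemma \ref{lem:Bij}(ii) handles $E^*_iF_i$: on $F_iV$ the sum $\sum_{\ell=0}^d E^*_\ell F_\ell$ collapses to $E^*_iF_i$ and maps $F_iV$ bijectively onto $E^*_iV$.

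For \eqref{eq:rFE} no analogous lemma is available, so I would argue via the direct sum properties. Using \eqref{eq:usum2} and the directness of $V=\sum_{j=0}^d U_j$, I would establish two claims: (a) $E_i$ restricts to a bijection $U_i\to E_iV$; (b) $F_i$ restricts to a bijection $E_iV\to U_i$. For (a), $u\in U_i$ with $E_iu=0$ gives $u\in E_{i+1}V+\cdots+E_dV=U_{i+1}+\cdots+U_d$, which forces $u=0$ by directness; equality of dimensions then upgrades injectivity to bijectivity. For (b), $x\in E_iV$ with $F_ix=0$ gives $x\in \sum_{j\neq i}U_j$, while $x\in E_iV\subseteq U_i+\cdots+U_d$ forces $x\in U_{i+1}+\cdots+U_d\subseteq E_{i+1}V+\cdots+E_dV$, and intersecting with $E_iV$ yields $x=0$. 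From (a), $E_iF_iV=E_iU_i=E_iV$, so ${\rm rank}\,E_iF_i=\rho_i$; from (b), $F_iE_iV=F_i(E_iV)=U_i$, so ${\rm rank}\,F_iE_i=\rho_i$. The proof reduces to bookkeeping with these decompositions and the flag identities \eqref{eq:usum1}, \eqref{eq:usum2}, so there is no real obstacle.
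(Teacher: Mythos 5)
Your proof of \eqref{eq:rFEs} is exactly the paper's: restrict the bijections of Lemma \ref{lem:Bij} to $E^*_iV$ and $F_iV$, where they collapse to $F_iE^*_i$ and $E^*_iF_i$. For \eqref{eq:rFE} you diverge: the paper simply applies \eqref{eq:rFEs} to the relative tridiagonal system $\Phi^\times = (A^*; \lbrace E^*_{d-\ell}\rbrace_{\ell=0}^d; A; \lbrace E_{d-\ell}\rbrace_{\ell=0}^d)$, whose split decomposition is $\lbrace U_{d-i}\rbrace_{i=0}^d$, and then uses $\rho_i = \rho_{d-i}$; you instead prove directly that $E_i$ restricts to a bijection $U_i \to E_iV$ and $F_i$ restricts to a bijection $E_iV \to U_i$, using the flag identity \eqref{eq:usum2}, the directness of $V = \sum_{j=0}^d U_j$ and of $V = \sum_{j=0}^d E_jV$, and a dimension count. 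Your two injectivity arguments are correct as written, so the proof goes through. The trade-off: the paper's route is a one-line reduction but tacitly relies on knowing that $\Phi^\times$ is a tridiagonal system and that its split decomposition is the reversal of that of $\Phi$ (so that $F_i^\times = F_{d-i}$); your route is longer but self-contained, needing only the directness statements and \eqref{eq:usum2}, and it yields as a by-product the explicit bijections $E_i\colon U_i \to E_iV$ and $F_i\colon E_iV \to U_i$, which are of independent interest.
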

\begin{proof} Concerning $F_iE^*_i$, 
we have $F_i E^*_i V = F_iV$ by Lemma \ref{lem:Bij}(i), so
 ${\rm dim} \, F_i E^*_i V = {\rm dim} \,F_i V = \rho_i$. Therefore, ${\rm rank}\,F_iE^*_i=\rho_i$. Similarly, we obtain  ${\rm rank}\,E^*_iF_i=\rho_i$.
 We have shown \eqref{eq:rFEs}. To get \eqref{eq:rFE}, apply \eqref{eq:rFEs} to the tridiagonal system 
$\Phi^\times = (A^*; \lbrace E^*_{d-\ell} \rbrace_{\ell=0}^d; A; \lbrace E_{d-\ell} \rbrace_{\ell=0}^d)$ and use $\rho_i = \rho_{d-i}$.
\end{proof}

\section{The maps $\mathcal R, \mathcal L$}
We continue to discuss the tridiagonal system $\Phi = ( A; \lbrace E_i \rbrace_{i=0}^d; A^*; \lbrace E^*_i \rbrace_{i=0}^d )$ on $V$.
In this section, we use the $\Phi$-split decomposition of $V$ to define some maps $\mathcal R, \mathcal L \in {\rm End}(V)$. These maps
were  introduced in \cite{TD00}. In the next section, we will describe how $R,F,L$ are related to $\mathcal R, \mathcal L$.

\begin{definition}
\label{def:RRLL}
\rm (See \cite[Definition~6.1]{TD00}.)
We define
\begin{align*}
{\mathcal R} = A - \sum_{i=0}^d \theta_i F_i, \qquad \qquad {\mathcal L} = A^*-\sum_{i=0}^d \theta^*_i F_i
\end{align*}
where $\lbrace F_i \rbrace_{i=0}^d$ are from Definition \ref{def:Fi}. We call $\mathcal R $ (resp. $ \mathcal L$) the
{\it $\Phi$-split raising map} (resp. {\it  $\Phi$-split lowering map}).
\end{definition}

\begin{lemma}
\label{lem:RRact}
{\rm (See \cite[Lemma~6.2]{TD00}.)}
For $0 \leq i \leq d$ the following hold on $F_iV$:
\begin{align*}
{\mathcal R} = A-\theta_i I, \qquad \qquad {\mathcal L} = A^*-\theta^*_i I.
\end{align*}
\end{lemma}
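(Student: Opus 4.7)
The plan is to evaluate both $\mathcal R$ and $\mathcal L$ on an arbitrary vector of $F_iV$ using the idempotent properties of $\lbrace F_j \rbrace_{j=0}^d$ from Lemma \ref{lem:factsFi}.

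First I would fix $i$ with $0 \leq i \leq d$ and pick $v \in F_iV$, writing $v = F_i w$ for some $w \in V$. Then for each $j$ with $0 \leq j \leq d$, the relation $F_j F_i = \delta_{i,j} F_i$ from Lemma \ref{lem:factsFi} gives $F_j v = F_j F_i w = \delta_{i,j} F_i w = \delta_{i,j} v$. Consequently,
\begin{align*}
\Biggl(\sum_{j=0}^d \theta_j F_j\Biggr) v = \sum_{j=0}^d \theta_j \delta_{i,j} v = \theta_i v,
\end{align*}
so $\sum_{j=0}^d \theta_j F_j$ acts as $\theta_i I$ on $F_i V$. Combining this with Definition \ref{def:RRLL} yields
\begin{align*}
\mathcal R v = A v - \Biggl(\sum_{j=0}^d \theta_j F_j \Biggr) v = (A - \theta_i I) v,
\end{align*}
which establishes the first equation. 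The argument for $\mathcal L$ on $F_i V$ is identical, using $\theta^*_j$ in place of $\theta_j$ and $A^*$ in place of $A$.

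Since both computations reduce to the orthogonality $F_j F_i = \delta_{i,j} F_i$ already established in Lemma \ref{lem:factsFi}, there is essentially no obstacle; the proof is a one-line verification once the idempotent decomposition $I = \sum_{j=0}^d F_j$ and the action of the $F_j$ on $F_iV$ are in hand.
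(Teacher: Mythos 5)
Your proof is correct and is exactly the immediate verification the result calls for: Definition \ref{def:RRLL} together with $F_jF_i=\delta_{i,j}F_i$ and $I=\sum_j F_j$ shows $\sum_j\theta_jF_j$ acts as $\theta_iI$ on $F_iV$, and likewise for the starred version. The paper simply cites \cite[Lemma~6.2]{TD00} rather than writing this out, but your argument is the standard one.
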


\begin{lemma}
\label{lem:RRmeaning}
{\rm (See \cite[Corollary~6.3]{TD00}.)} 
We have
\begin{align*}
&{\mathcal R} F_iV \subseteq F_{i+1}V \quad (0 \leq i \leq d-1), \qquad \quad {\mathcal R} F_dV = 0, \\
&{\mathcal L} F_i V\subseteq F_{i-1}V \quad (1 \leq i \leq d), \qquad \quad {\mathcal L} F_0V =0.
\end{align*}
\end{lemma}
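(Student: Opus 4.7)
The plan is to combine three things already on the table: the identification $U_i=F_iV$ from Lemma \ref{lem:factsFi}, the local formulas $\mathcal R|_{F_iV}=A-\theta_iI$ and $\mathcal L|_{F_iV}=A^*-\theta^*_iI$ from Lemma \ref{lem:RRact}, and the inclusions
\begin{align*}
(A-\theta_i I)U_i \subseteq U_{i+1} \ (0\leq i\leq d-1), \qquad (A-\theta_d I)U_d=0,\\
(A^*-\theta^*_i I)U_i \subseteq U_{i-1} \ (1\leq i\leq d), \qquad (A^*-\theta^*_0 I)U_0=0
\end{align*}
already recorded (from \cite[Theorem~4.6]{TD00}) when the $\Phi$-split decomposition was introduced. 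Once these three ingredients are in hand, the assertion is immediate.

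First I would fix $i$ with $0\leq i\leq d-1$ and pick $v\in F_iV$. By Lemma \ref{lem:factsFi} we have $F_iV=U_i$, so $v\in U_i$. By Lemma \ref{lem:RRact} applied on $F_iV$, $\mathcal R v=(A-\theta_iI)v$. The inclusion $(A-\theta_i I)U_i\subseteq U_{i+1}=F_{i+1}V$ then gives $\mathcal R v\in F_{i+1}V$, which is the first claim. For the boundary case $i=d$, the same two steps together with $(A-\theta_dI)U_d=0$ yield $\mathcal R F_dV=0$.

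The argument for $\mathcal L$ is identical, with $A$ replaced by $A^*$, $\theta_i$ by $\theta^*_i$, and the direction of the index shift reversed: for $v\in F_iV=U_i$ we have $\mathcal L v=(A^*-\theta^*_iI)v\in U_{i-1}=F_{i-1}V$ when $1\leq i\leq d$, and $\mathcal L F_0V=0$ by $(A^*-\theta^*_0I)U_0=0$.

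There is no real obstacle here; the lemma is a direct repackaging of the $\Phi$-split decomposition facts in terms of the projectors $F_i$ and the shifted operators $\mathcal R,\mathcal L$. The only thing worth flagging in the write-up is to cite Lemma \ref{lem:factsFi} explicitly when passing from $F_iV$ to $U_i$, so that the reader does not have to re-derive this equivalence.
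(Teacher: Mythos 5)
Your proof is correct. The paper itself gives no argument here (it simply cites \cite[Corollary~6.3]{TD00}), and your derivation --- combining $U_i=F_iV$ from Lemma \ref{lem:factsFi}, the action of $\mathcal R,\mathcal L$ on $F_iV$ from Lemma \ref{lem:RRact}, and the inclusions $(A-\theta_iI)U_i\subseteq U_{i+1}$, $(A^*-\theta^*_iI)U_i\subseteq U_{i-1}$ recorded in Section~6 --- is exactly the standard and intended route, with all ingredients already available in the text.
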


\begin{lemma} \label{lem:cRLnil}
For $0 \leq i \leq d$ we have
\begin{align*}
  \mathcal R^{d-i+1} F_i =0, \qquad \qquad        \mathcal L^{i+1} F_i =0.
\end{align*}
Moreover,
\begin{align*}
\mathcal R^{d+1}=0, \qquad \qquad \mathcal L^{d+1}=0.
\end{align*}
\end{lemma}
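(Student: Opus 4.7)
The plan is to derive both statements as immediate iterates of Lemma \ref{lem:RRmeaning}, together with the resolution of the identity from Lemma \ref{lem:factsFi}. There is essentially no obstacle here; the work is bookkeeping on indices.

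First I would show by induction on $k \geq 0$ that $\mathcal R^{k} F_i V \subseteq F_{i+k} V$ whenever $0 \leq i$ and $i+k \leq d$. The base case $k=0$ is trivial (and $k=1$ is exactly Lemma \ref{lem:RRmeaning}), and the inductive step is $\mathcal R^{k} F_i V = \mathcal R (\mathcal R^{k-1} F_i V) \subseteq \mathcal R F_{i+k-1} V \subseteq F_{i+k} V$. Taking $k = d-i$ gives $\mathcal R^{d-i} F_i V \subseteq F_d V$, and one more application of $\mathcal R$ kills this by the second clause of Lemma \ref{lem:RRmeaning}, yielding $\mathcal R^{d-i+1} F_i V = 0$, hence $\mathcal R^{d-i+1} F_i = 0$. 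The identity $\mathcal L^{i+1} F_i = 0$ is obtained in precisely the analogous manner using $\mathcal L F_j V \subseteq F_{j-1} V$ and $\mathcal L F_0 V = 0$.

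For the ``moreover'' clauses, I would invoke $I = \sum_{i=0}^d F_i$ from Lemma \ref{lem:factsFi} to write
\begin{align*}
\mathcal R^{d+1} = \mathcal R^{d+1} \sum_{i=0}^d F_i = \sum_{i=0}^d \mathcal R^{i}\bigl(\mathcal R^{d-i+1} F_i\bigr) = 0,
\end{align*}
where the middle step uses $d+1 = i + (d-i+1)$ and the already-established vanishing $\mathcal R^{d-i+1} F_i = 0$. The same computation with $\mathcal L$ in place of $\mathcal R$, using $d+1 = (d-i) + (i+1)$, gives $\mathcal L^{d+1} = 0$.

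The main (minor) point to watch is the index arithmetic: confirming that the exponent $d-i+1$ is correct so that the iterate just crosses the ``edge'' $F_d$ (respectively $F_0$) and lands in the zero subspace, and in the global statement that $d+1$ splits as a sum that makes each summand vanish. Beyond that, the proof is a one-line consequence of Lemma \ref{lem:RRmeaning} and Lemma \ref{lem:factsFi}.
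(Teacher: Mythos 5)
Your proof is correct and follows the same route as the paper, which simply cites Lemma \ref{lem:RRmeaning} (the iteration of the containments and the use of $I=\sum_{i=0}^d F_i$ being exactly the intended bookkeeping). The index arithmetic checks out in all cases.
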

\begin{proof} By Lemma \ref{lem:RRmeaning}.
\end{proof}

\begin{lemma}
\label{lem:RRmeaning2}
{\rm (See \cite[Lemmas~6.4, 7.1]{TD00}.)}
For $0 \leq i,j\leq d$ we have $F_i A F_j =0$ unless $i-j \in \lbrace 0,1\rbrace$.
Moreover
\begin{align*}
&F_{i+1} A F_i = {\mathcal R} F_i= F_{i+1} {\mathcal R} \quad (0 \leq i \leq d-1),  \\
& F_i A F_i = \theta_i F_i \qquad \qquad (0 \leq i \leq d).
\end{align*}
\end{lemma}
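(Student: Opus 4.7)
The plan is to reduce everything to the action of $A$ on the split decomposition $V=\bigoplus_{i=0}^d U_i$ (with $U_i = F_iV$) together with the idempotent relations $F_iF_j = \delta_{i,j}F_i$ and $I = \sum_\ell F_\ell$ from Lemma \ref{lem:factsFi}. The key input is the containment $(A-\theta_j I)U_j \subseteq U_{j+1}$ (for $0 \leq j \leq d-1$) and $(A-\theta_d I)U_d = 0$ from just before Definition \ref{def:Fi}, which rearranges to
\begin{align*}
A F_j V = A U_j \subseteq \theta_j U_j + U_{j+1} \subseteq U_j + U_{j+1} \qquad (0 \leq j \leq d),
\end{align*}
with the understanding that $U_{d+1} = 0$.

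First I would establish the vanishing $F_i A F_j = 0$ for $i - j \notin \{0,1\}$. Starting from the inclusion displayed above, apply $F_i$ on the left: since $F_i U_k = 0$ for $k \neq i$ (by Lemma \ref{lem:factsFi}), we get $F_i A F_j V \subseteq F_i(U_j + U_{j+1}) = 0$ whenever $i \notin \{j, j+1\}$.

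Next I would compute the diagonal term $F_i A F_i$. For $v \in V$, write $A F_i v = \theta_i F_i v + w$ with $w \in U_{i+1}$ (using $(A-\theta_i I)U_i \subseteq U_{i+1}$, and allowing $w=0$ when $i=d$). Applying $F_i$ and using $F_i^2 = F_i$ and $F_i U_{i+1} = 0$ yields $F_i A F_i v = \theta_i F_i v$, so $F_i A F_i = \theta_i F_i$.

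Finally I would handle the subdiagonal identity $F_{i+1} A F_i = \mathcal R F_i = F_{i+1} \mathcal R$. Unwinding Definition \ref{def:RRLL} and using the idempotent orthogonality,
\begin{align*}
\mathcal R F_i = \Bigl(A - \sum_{\ell=0}^d \theta_\ell F_\ell\Bigr)F_i = A F_i - \theta_i F_i,
\end{align*}
while the resolution $I = \sum_\ell F_\ell$ together with the vanishing from the first step gives $A F_i = \sum_\ell F_\ell A F_i = F_i A F_i + F_{i+1} A F_i = \theta_i F_i + F_{i+1} A F_i$. Substituting shows $\mathcal R F_i = F_{i+1} A F_i$. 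The identity $F_{i+1}\mathcal R = F_{i+1}AF_i$ follows symmetrically: $F_{i+1}\mathcal R = F_{i+1}A - \theta_{i+1} F_{i+1}$, and expanding $F_{i+1}A = \sum_\ell F_{i+1} A F_\ell = F_{i+1}AF_i + \theta_{i+1}F_{i+1}$ cancels the $\theta_{i+1}F_{i+1}$ term.

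I do not foresee a genuine obstacle here; the entire argument is bookkeeping once the two ingredients $AU_j \subseteq U_j + U_{j+1}$ and the $F_i$-orthogonality are in hand. The mildest subtlety is the boundary case $j=d$, which I would dispatch by invoking $U_{d+1}=0$ so that the same formulas hold uniformly.
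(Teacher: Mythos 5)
Your proof is correct. The paper does not actually prove this lemma---it is quoted from \cite[Lemmas~6.4, 7.1]{TD00}---but your argument is the standard derivation and uses exactly the ingredients the paper itself records: the containments $(A-\theta_i I)U_i \subseteq U_{i+1}$ and $(A-\theta_d I)U_d=0$ quoted in Section 6, the orthogonality $F_iF_j=\delta_{i,j}F_i$ with $I=\sum_\ell F_\ell$ from Lemma \ref{lem:factsFi}, and Definition \ref{def:RRLL}. All three parts (the vanishing of $F_iAF_j$ off the two relevant diagonals, the diagonal identity $F_iAF_i=\theta_iF_i$, and the two expressions for $F_{i+1}AF_i$) check out, including the boundary case $i=d$ via $U_{d+1}=0$.
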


\begin{lemma}
\label{lem:LLmeaning2}
{\rm (See \cite[Lemmas~6.4, 7.2]{TD00}.)}
For $0 \leq i,j\leq d$ we have $F_i A^* F_j =0$ unless $j-i \in \lbrace 0,1\rbrace$.
Moreover
\begin{align*}
&F_{i-1} A^* F_i = {\mathcal L} F_i = F_{i-1} {\mathcal L} \qquad \qquad (1 \leq i \leq d), \\
& F_i A^* F_i = \theta^*_i F_i \qquad \qquad (0 \leq i \leq d).
\end{align*}
\end{lemma}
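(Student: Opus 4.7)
The plan is to mirror the argument given (implicitly) for Lemma \ref{lem:RRmeaning2}, swapping the roles of $A$ with $A^*$ and of $\mathcal R$ with $\mathcal L$. The two key inputs are Lemma \ref{lem:RRact}, which on $F_jV$ gives the identity $A^* = \mathcal L + \theta^*_j I$, and Lemma \ref{lem:RRmeaning}, which gives $\mathcal L F_jV \subseteq F_{j-1}V$ (with $F_{-1}=0$). Combining these, $A^* F_j V \subseteq F_{j-1}V + F_jV$.

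First I would establish the vanishing assertion. Apply $F_i$ on the left of $A^* F_j$. By the previous paragraph, $F_i A^* F_j V \subseteq F_iF_{j-1}V + F_iF_jV$, and Lemma \ref{lem:factsFi} gives $F_iF_k = \delta_{i,k}F_i$. Thus this is zero unless $i \in \{j-1,j\}$, which is the condition $j-i \in \{0,1\}$.

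Next I would verify the two displayed identities. For $F_{i-1}A^* F_i = \mathcal L F_i$: on $V$, use $A^* F_i = \mathcal L F_i + \theta^*_i F_i$ (from Lemma \ref{lem:RRact}) and multiply on the left by $F_{i-1}$; the second summand dies because $F_{i-1}F_i=0$, while the first is unchanged because $\mathcal L F_iV \subseteq F_{i-1}V$ is fixed by the projector $F_{i-1}$. For the equality $F_{i-1}\mathcal L = \mathcal L F_i$: insert $I = \sum_{j=0}^d F_j$ on the right of $F_{i-1}\mathcal L$; since $\mathcal L F_jV \subseteq F_{j-1}V$ and $F_{i-1}F_{j-1}=\delta_{i,j}F_{i-1}$, only the term with $j=i$ survives, giving $F_{i-1}\mathcal L = F_{i-1}\mathcal L F_i$, and the latter equals $\mathcal L F_i$ by the same range argument as above.

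For the second displayed identity, $F_i A^* F_i = \theta^*_i F_i$, take $A^* F_i = \mathcal L F_i + \theta^*_i F_i$ once more and hit it on the left with $F_i$: the $\mathcal L F_i$ term vanishes because $\mathcal L F_i V \subseteq F_{i-1}V$ and $F_iF_{i-1}=0$, and $F_i(\theta^*_i F_i) = \theta^*_i F_i^2 = \theta^*_i F_i$ by Lemma \ref{lem:factsFi}. There is no real obstacle in this proof; the entire argument is a bookkeeping exercise with the idempotent orthogonality relations and the range/eigenvalue content of Lemmas \ref{lem:RRact} and \ref{lem:RRmeaning}, and it is the exact $A^*$-analogue of the preceding Lemma \ref{lem:RRmeaning2}.
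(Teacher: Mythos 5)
Your proof is correct: each of the four assertions follows exactly as you describe from $A^*F_j=\mathcal L F_j+\theta^*_jF_j$ (Lemma \ref{lem:RRact}), the containment $\mathcal L F_jV\subseteq F_{j-1}V$ (Lemma \ref{lem:RRmeaning}), and the orthogonality $F_iF_j=\delta_{i,j}F_i$ (Lemma \ref{lem:factsFi}). The paper itself gives no proof here, citing \cite{TD00} instead, and your argument is the standard one that the cited source uses.
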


\begin{lemma}
\label{lem:RRbij}
{\rm (See \cite[Lemma~6.5]{TD00}.)}
For $0 \leq i\leq j\leq d$ the following {\rm (i), (ii)} hold.
\begin{enumerate}
\item[\rm (i)] The map ${\mathcal R}^{j-i}: F_iV \to F_jV$ is injective if $i+j\leq d$, bijective if $i+j=d$, and surjective if $i+j\geq d$.
\item[\rm (ii)]  The map ${\mathcal L}^{j-i}: F_jV \to F_iV$ is injective if $i+j\geq d$, bijective if $i+j=d$, and surjective if $i+j\leq d$.
\end{enumerate}
\end{lemma}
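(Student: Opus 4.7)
The plan is to prove bijectivity of the ``middle'' maps $\mathcal R^{d-2i}: F_iV \to F_{d-i}V$ and $\mathcal L^{d-2i}: F_{d-i}V \to F_iV$ for $0 \le i \le d/2$, then derive the general statements by factorization. For arbitrary $0 \le i \le j \le d$ with $i+j \le d$, the identity $\mathcal R^{d-2i} = \mathcal R^{(d-i)-j} \circ \mathcal R^{j-i}$ as maps $F_iV \to F_jV \to F_{d-i}V$ shows that injectivity of the middle map forces injectivity of $\mathcal R^{j-i}$. For $i+j \ge d$, setting $i' = d-j$ gives $\mathcal R^{2j-d} = \mathcal R^{j-i} \circ \mathcal R^{i+j-d}$ as maps $F_{d-j}V \to F_iV \to F_jV$, and surjectivity of the middle map forces surjectivity of $\mathcal R^{j-i}$. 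The case $i+j = d$ is the middle itself. The same scheme handles (ii) for $\mathcal L$. Because $\dim F_iV = \rho_i = \rho_{d-i} = \dim F_{d-i}V$, in the middle case bijectivity is equivalent to injectivity and to surjectivity.

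To prove injectivity of $\mathcal R^{d-2i}: F_iV \to F_{d-i}V$, let $v \in F_iV$ with $\mathcal R^{d-2i}v = 0$. By iterating Lemma \ref{lem:RRact} along the chain $F_iV \to F_{i+1}V \to \cdots \to F_{d-i}V$ and using commutativity of polynomials in $A$,
\[
\mathcal R^{d-2i}v \;=\; \Biggl[\,\prod_{\ell=i}^{d-i-1}(A - \theta_\ell I)\Biggr] v.
\]
By \eqref{eq:usum2} we have $F_iV \subseteq E_iV + \cdots + E_dV$, so we may write $v = \sum_{k=i}^d E_k v$. The product acts on $E_k v$ by the scalar $\prod_{\ell=i}^{d-i-1}(\theta_k - \theta_\ell)$, which is nonzero precisely when $k \ge d-i$ (since the $\theta_\ell$ are mutually distinct). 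Hence $E_k v = 0$ for $d-i \le k \le d$, so $v \in E_iV + E_{i+1}V + \cdots + E_{d-i-1}V$.

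The main obstacle is concluding $v = 0$ from the membership $v \in F_iV \cap (E_iV + \cdots + E_{d-i-1}V)$ when $i < d/2$. I would handle this by applying the analogous eigenspace argument to the tridiagonal system $\Phi^\times = (A^*; \{E^*_{d-\ell}\}_{\ell=0}^d; A; \{E_{d-\ell}\}_{\ell=0}^d)$, which has the same shape $\{\rho_k\}_{k=0}^d$; this yields a companion constraint on $v$ in terms of $E^*$-eigenspaces. Combining the two constraints with unimodality \eqref{eq:unimodal}, the equality $\rho_i = \rho_{d-i}$, and the direct sum $V = \bigoplus_k F_kV$, a dimension count forces the intersection to vanish. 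Once the middle bijectivity for $\mathcal R$ is established, the bijectivity of the middle $\mathcal L^{d-2i}$ follows by applying the same argument to $\Phi^\times$, where the roles of $\mathcal R$ and $\mathcal L$ are interchanged, completing the proof.
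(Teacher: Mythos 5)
This lemma is not proved in the paper at all; it is quoted verbatim from \cite[Lemma~6.5]{TD00}, so the only meaningful comparison is with the argument there. Your reduction of the general case to the ``middle'' maps $\mathcal R^{d-2i}\colon F_iV \to F_{d-i}V$ by factoring through $F_jV$, and your identification of $\mathcal R^{d-2i}$ on $F_iV$ with $\prod_{\ell=i}^{d-i-1}(A-\theta_\ell I)$ via Lemma \ref{lem:RRact}, hence of the kernel with $F_iV \cap (E_iV + \cdots + E_{d-i-1}V)$ via \eqref{eq:usum2}, are correct and a sensible way to organize the problem.

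However, there is a genuine gap exactly where you flag ``the main obstacle'': you never prove that $F_iV \cap (E_iV + \cdots + E_{d-i-1}V)=0$ for $i<d/2$, and this is the entire content of the lemma --- everything else is bookkeeping. The closing sketch does not work as described, for three reasons. First, the ``companion constraint'' from $\Phi^\times$ is vacuous here: one checks that $U^\times_{d-i}=U_i=F_iV$ and that the raising map of $\Phi^\times$ is $\mathcal L$, so the analogous eigenspace argument constrains $v$ only under a hypothesis of the form $\mathcal L^m v=0$, which you do not have; the hypothesis $\mathcal R^{d-2i}v=0$ gives no information about the $E^*$-components of $v$. (The automatic constraint $v\in E^*_0V+\cdots+E^*_iV$ from the definition of $U_i$ adds nothing: intersecting it with $E_iV+\cdots+E_{d-i-1}V$ just reproduces the same subspace $F_iV\cap(E_iV+\cdots+E_{d-i-1}V)$.) Second, no dimension count can finish the job: dimension counts bound intersections of subspaces from \emph{below}, never from above, so they cannot force an intersection to vanish. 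Third, appealing to \eqref{eq:unimodal} is circular relative to the source: in \cite{TD00} the unimodality of the shape (Corollary~6.6 there) is \emph{deduced from} Lemma~6.5. The proof in \cite{TD00} closes this step by invoking the irreducibility axiom, Definition \ref{def:TDP}(iv), in an essential way: a nonzero kernel is used to generate a nonzero proper subspace invariant under both $A$ and $A^*$, a contradiction. Your argument never uses irreducibility directly, and the conclusion fails without it, so some argument of that kind cannot be avoided.
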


\section{How $R,F,L$ are related to $\mathcal R, \mathcal L$}
We continue to discuss the tridiagonal system $\Phi = ( A; \lbrace E_i \rbrace_{i=0}^d; A^*; \lbrace E^*_i \rbrace_{i=0}^d )$ on $V$.
In this section, we describe how $R,F,L$ are related to $\mathcal R, \mathcal L$.

\begin{lemma} 
\label{lem:FEstep1}
For $0 \leq i < j \leq d$,
\begin{align*}
F_i E^*_j = \frac{\mathcal L}{\theta^*_j - \theta^*_i} F_{i+1} E^*_j.
\end{align*}
\end{lemma}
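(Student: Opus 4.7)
The plan is to clear the denominator and prove the equivalent identity
\[
(\theta^*_j - \theta^*_i)\, F_i E^*_j \;=\; \mathcal L\, F_{i+1} E^*_j,
\]
which is legitimate because $i < j$ forces $\theta^*_i \neq \theta^*_j$ (the dual eigenvalues are mutually distinct, by Definition \ref{def:eigenval}).

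First I would move $\mathcal L$ through $F_{i+1}$ using Lemma \ref{lem:LLmeaning2}, which gives $\mathcal L F_{i+1} = F_i \mathcal L$. Applying this,
\[
\mathcal L F_{i+1} E^*_j \;=\; F_i\,\mathcal L\, E^*_j.
\]
Next I would expand $\mathcal L = A^* - \sum_{k=0}^d \theta^*_k F_k$ from Definition \ref{def:RRLL} and use $A^* E^*_j = \theta^*_j E^*_j$ together with the orthogonality $F_i F_k = \delta_{i,k} F_i$ from Lemma \ref{lem:factsFi}, obtaining
\[
F_i \mathcal L E^*_j \;=\; F_i A^* E^*_j - \sum_{k=0}^d \theta^*_k F_i F_k E^*_j \;=\; \theta^*_j F_i E^*_j - \theta^*_i F_i E^*_j \;=\; (\theta^*_j - \theta^*_i)\, F_i E^*_j,
\]
which is exactly what is needed. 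Dividing by the nonzero scalar $\theta^*_j - \theta^*_i$ then yields the statement.

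There is essentially no obstacle here: the argument is just a combination of the commutation $\mathcal L F_{i+1} = F_i \mathcal L$ and the definition of $\mathcal L$, with the distinctness of the dual eigenvalues ensuring the division is valid. The only point requiring a small amount of care is the direction of the commutation (one must use the index-shifted form of Lemma \ref{lem:LLmeaning2} with $i+1$ in place of $i$, which is available since $i+1 \geq 1$).
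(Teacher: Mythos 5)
Your proof is correct and follows essentially the same route as the paper: both rely on the commutation $\mathcal L F_{i+1} = F_i \mathcal L$ from Lemma \ref{lem:LLmeaning2} and then expand $\mathcal L = A^* - \sum_{\ell=0}^d \theta^*_\ell F_\ell$, using $A^* E^*_j = \theta^*_j E^*_j$ and the orthogonality of the $F_\ell$ to arrive at $(\theta^*_j - \theta^*_i) F_i E^*_j$. The attention you pay to the index range for the commutation and to the nonvanishing of $\theta^*_j - \theta^*_i$ is appropriate and matches what the paper leaves implicit.
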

\begin{proof} Observe that
\begin{align*}
\mathcal L F_{i+1} E^*_j = F_i \mathcal L E^*_j = F_i \Biggl( A^* - \sum_{\ell=0}^d \theta^*_\ell F_\ell \Biggr) E^*_j = (\theta^*_j - \theta^*_i) F_i E^*_j.
\end{align*}
The result follows.
\end{proof}

\begin{proposition}
\label{prop:FEform}
For $0 \leq i \leq j \leq d$,
\begin{align*}
F_i E^*_j = \frac{ {\mathcal L}^{j-i}}{(\theta^*_j-\theta^*_i)(\theta^*_j-\theta^*_{i+1}) \cdots (\theta^*_j - \theta^*_{j-1})} F_j E^*_j.
\end{align*}
\end{proposition}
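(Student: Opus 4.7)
The plan is to prove the proposition by induction on $j-i$, using Lemma \ref{lem:FEstep1} as the one-step relation that drives the induction.

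For the base case $j = i$, the exponent $j-i$ equals $0$, the product in the denominator is empty (and so equals $1$ by the usual convention), and $\mathcal{L}^0 = I$. The claimed identity reduces to $F_i E^*_i = F_i E^*_i$, which is trivial.

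For the inductive step, fix $i < j$ and assume the identity holds with $i$ replaced by $i+1$ (so for the pair $(i+1,j)$, where the difference is smaller). By Lemma \ref{lem:FEstep1},
\begin{align*}
F_i E^*_j = \frac{\mathcal L}{\theta^*_j - \theta^*_i}\, F_{i+1} E^*_j.
\end{align*}
Applying the inductive hypothesis to $F_{i+1} E^*_j$ and pulling the scalar denominator out past $\mathcal{L}$, we obtain
\begin{align*}
F_i E^*_j = \frac{\mathcal L}{\theta^*_j - \theta^*_i}\cdot \frac{\mathcal L^{j-i-1}}{(\theta^*_j - \theta^*_{i+1})\cdots(\theta^*_j-\theta^*_{j-1})}\, F_j E^*_j = \frac{\mathcal L^{j-i}}{(\theta^*_j-\theta^*_i)(\theta^*_j-\theta^*_{i+1})\cdots(\theta^*_j-\theta^*_{j-1})}\, F_j E^*_j,
\end{align*}
which is the desired identity.

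There is no real obstacle here; the work has already been done in Lemma \ref{lem:FEstep1}, where the fact that $\mathcal{L}$ acts on $F_{i+1}V$ as $A^* - \theta^*_{i+1}I$ (combined with $A^* E^*_j = \theta^*_j E^*_j$) produced the factor $\theta^*_j - \theta^*_i$. The only thing to verify in the write-up is that the denominators $\theta^*_j - \theta^*_k$ for $i \leq k \leq j-1$ are nonzero, which holds because the scalars $\lbrace \theta^*_\ell \rbrace_{\ell=0}^d$ are mutually distinct (Definition \ref{def:eigenval}).
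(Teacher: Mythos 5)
Your proof is correct and follows exactly the route the paper takes: the paper's proof of this proposition is literally ``By Lemma \ref{lem:FEstep1} and induction on $j-i$,'' and your write-up just supplies the routine base case and inductive step. The remark about nonvanishing denominators is a sensible addition, though note that your parenthetical explanation of where the factor $\theta^*_j-\theta^*_i$ comes from slightly misdescribes the computation in Lemma \ref{lem:FEstep1} (there it arises from $F_i A^* E^*_j = \theta^*_j F_i E^*_j$ together with $F_i \sum_\ell \theta^*_\ell F_\ell = \theta^*_i F_i$, not from the action of $\mathcal L$ on $F_{i+1}V$); this does not affect your argument, which uses only the statement of that lemma.
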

\begin{proof} By Lemma \ref{lem:FEstep1} and induction on $j-i$.
\end{proof}

\begin{lemma}
\label{lem:EFstep1}
For $0 \leq i < j \leq d$,
\begin{align*}
E^*_i F_j = E^*_i F_{j-1} \frac{ \mathcal L}{ \theta^*_i - \theta^*_j}.
\end{align*}
\end{lemma}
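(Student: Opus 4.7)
The plan is to mirror the proof of Lemma \ref{lem:FEstep1}, but push $\mathcal L$ through from the right instead of the left. Specifically, I would start by computing the product $E^*_i F_{j-1}\mathcal L$ and show it equals $(\theta^*_i-\theta^*_j) E^*_i F_j$, after which the claim follows on dividing by $\theta^*_i-\theta^*_j$, which is nonzero by Definition \ref{def:eigenval}.

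First, I would invoke Lemma \ref{lem:LLmeaning2} in the form $F_{j-1}\mathcal L = \mathcal L F_j$ (valid since $1 \leq j \leq d$) to rewrite
\begin{align*}
E^*_i F_{j-1}\mathcal L = E^*_i \mathcal L F_j.
\end{align*}
Next, I would expand $\mathcal L$ using Definition \ref{def:RRLL}:
\begin{align*}
E^*_i \mathcal L F_j = E^*_i A^* F_j - \sum_{\ell=0}^d \theta^*_\ell E^*_i F_\ell F_j.
\end{align*}
Now $E^*_i A^* = \theta^*_i E^*_i$ by \eqref{eq:AAs} and the orthogonality of the $E^*_\ell$, while $F_\ell F_j = \delta_{\ell,j} F_j$ by Lemma \ref{lem:factsFi}. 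Substituting these two facts collapses the right side to $(\theta^*_i - \theta^*_j) E^*_i F_j$, and dividing by the scalar $\theta^*_i - \theta^*_j$ yields the claimed identity.

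There is essentially no obstacle here beyond bookkeeping: the lemma is the exact left-right transpose of Lemma \ref{lem:FEstep1}, and the two ingredients (the commutation $F_{j-1}\mathcal L = \mathcal L F_j$ and the definition of $\mathcal L$) are already established. The only point to be careful about is applying $\mathcal L$ on the correct side of $F_j$ so that Lemma \ref{lem:LLmeaning2} can be used directly, rather than attempting to commute $\mathcal L$ past $E^*_i$, which would not produce a clean identity.
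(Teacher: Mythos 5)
Your proposal is correct and follows exactly the paper's own argument: the paper's proof is the single chain $E^*_i F_{j-1}\mathcal L = E^*_i \mathcal L F_j = E^*_i \bigl(A^* - \sum_{\ell=0}^d \theta^*_\ell F_\ell\bigr)F_j = (\theta^*_i - \theta^*_j)E^*_i F_j$, which is precisely your computation with the same two ingredients (Lemma \ref{lem:LLmeaning2} and Definition \ref{def:RRLL}). No discrepancies to report.
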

\begin{proof}
Observe that
\begin{align*}
E^*_i F_{j-1} \mathcal L = E^*_i \mathcal L F_j = E^*_i \Biggl( A^*- \sum_{\ell=0}^d \theta^*_\ell F_\ell \Biggr) F_j = (\theta^*_i - \theta^*_j) E^*_i F_j.
\end{align*}
The result follows.
\end{proof}

\begin{proposition} 
\label{prop:EFform}
For $0 \leq i \leq j \leq d$,
\begin{align*}
E^*_i F_j = E^*_i F_i \frac{ {\mathcal L}^{j-i}}{(\theta^*_i-\theta^*_j)(\theta^*_i - \theta^*_{j-1}) \cdots (\theta^*_i - \theta^*_{i+1})}.
\end{align*}
\end{proposition}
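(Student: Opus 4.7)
The plan is to prove this by induction on $j-i$, in complete parallel to how Proposition \ref{prop:FEform} was obtained from Lemma \ref{lem:FEstep1}. The key ingredient is Lemma \ref{lem:EFstep1}, which peels off one factor of $\mathcal L$ on the right and drops the index $j$ by one.

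For the base case $j=i$, the exponent of $\mathcal L$ is zero and the product in the denominator is empty, so both sides equal $E^*_iF_i$ and there is nothing to check. For the inductive step, assume the formula holds with $j$ replaced by $j-1$ (where $i<j\leq d$). Applying Lemma \ref{lem:EFstep1} first, then the induction hypothesis, I would compute
\begin{align*}
E^*_i F_j &= E^*_i F_{j-1}\, \frac{\mathcal L}{\theta^*_i - \theta^*_j} \\
&= E^*_i F_i\, \frac{{\mathcal L}^{j-1-i}}{(\theta^*_i-\theta^*_{j-1})(\theta^*_i - \theta^*_{j-2}) \cdots (\theta^*_i - \theta^*_{i+1})}\cdot \frac{\mathcal L}{\theta^*_i - \theta^*_j} \\
&= E^*_i F_i\, \frac{{\mathcal L}^{j-i}}{(\theta^*_i-\theta^*_j)(\theta^*_i - \theta^*_{j-1}) \cdots (\theta^*_i - \theta^*_{i+1})},
\end{align*}
which is the desired identity. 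Here the rearrangement in the last step is just commutativity of the scalar denominators.

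There is no real obstacle: the denominators are nonzero because the dual eigenvalues $\{\theta^*_\ell\}_{\ell=0}^d$ are mutually distinct (see Definition \ref{def:eigenval}), so every factor $\theta^*_i - \theta^*_k$ with $k>i$ appearing in the denominator is nonzero, and the induction carries through without incident. Essentially the entire proof reduces to ``by Lemma \ref{lem:EFstep1} and induction on $j-i$'', mirroring the one-line proof of Proposition \ref{prop:FEform}.
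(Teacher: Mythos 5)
Your proof is correct and is exactly the argument the paper intends: the paper's own proof is the one-liner ``By Lemma \ref{lem:EFstep1} and induction on $j-i$,'' and your write-up simply fills in the base case and the inductive step in the obvious way.
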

\begin{proof} By Lemma \ref{lem:EFstep1} and induction on $j-i$.
\end{proof}


\begin{theorem}
\label{prop:comp}
For $0 \leq i ,j\leq d$ we have
\begin{align*}
&F_i E^*_i A E^*_j \\
&= \sum_{i \leq s \leq j} \theta_s \frac{ \mathcal L^{s-i}}{ (\theta^*_i-\theta^*_s)(\theta^*_i-\theta^*_{s-1}) \cdots (\theta^*_i - \theta^*_{i+1})} \; 
\frac{\mathcal L^{j-s}}{(\theta^*_j - \theta^*_s) (\theta^*_j - \theta^*_{s+1})\cdots (\theta^*_j - \theta^*_{j-1})} F_j E^*_j \\
&+ \sum_{\stackrel{i \leq r \leq d,}{\stackrel{0\leq s \leq j, }{\stackrel{r-s=1}{}}}}
 \frac{ \mathcal L^{r-i}}{ (\theta^*_i-\theta^*_r)(\theta^*_i-\theta^*_{r-1})\cdots (\theta^*_i - \theta^*_{i+1})} \; \mathcal R \; \frac{\mathcal L^{j-s}}{(\theta^*_j - \theta^*_s)(\theta^*_j - \theta^*_{s+1})\cdots (\theta^*_j - \theta^*_{j-1})} F_j E^*_j.
\end{align*}
\end{theorem}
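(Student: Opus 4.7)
The plan is to insert the resolution of the identity $I = \sum_{k=0}^d F_k$ from Lemma \ref{lem:factsFi} on both sides of $A$ inside $F_i E^*_i A E^*_j$, and then use Propositions \ref{prop:EFform} and \ref{prop:FEform} to turn each resulting summand into a product of the form (power of $\mathcal L$)$\cdot$(possibly $\mathcal R$)$\cdot$(power of $\mathcal L$)$\cdot F_j E^*_j$.

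First I would write
\[
F_i E^*_i A E^*_j = \sum_{r,s=0}^d F_i E^*_i F_r A F_s E^*_j.
\]
By Lemma \ref{lem:RRmeaning2}, $F_r A F_s = 0$ unless $r-s \in \{0,1\}$; in these cases $F_s A F_s = \theta_s F_s$ and $F_{s+1} A F_s = F_{s+1} \mathcal R F_s$. This yields a ``diagonal'' sum (with factor $\theta_s$) and an ``off-diagonal'' sum (with an internal $\mathcal R$). By Lemma \ref{lem:FE}, $E^*_i F_k = 0$ for $k < i$ and $F_k E^*_j = 0$ for $k > j$; these vanishings restrict the diagonal sum to $i \le s \le j$ and the off-diagonal sum to $i \le r \le d$, $0 \le s \le j$, $r - s = 1$, matching the ranges in the statement.

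Next, on each surviving term I would apply Proposition \ref{prop:EFform} to rewrite $E^*_i F_s$ (respectively $E^*_i F_r$) as $E^*_i F_i$ times the prescribed power of $\mathcal L$ over the prescribed denominator, and apply Proposition \ref{prop:FEform} to rewrite $F_s E^*_j$ as the prescribed power of $\mathcal L$ over the prescribed denominator times $F_j E^*_j$. Lemma \ref{lem:FEF}, in the form $F_i E^*_i F_i = F_i$, then collapses the central projection. At this stage, each surviving term equals $F_i$ multiplied by exactly the summand claimed in the theorem.

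Finally I would observe that the leading $F_i$ is redundant: starting from $F_j E^*_j V \subseteq F_j V$, Lemma \ref{lem:RRmeaning} shows that successive applications of $\mathcal L^{j-s}$, optionally $\mathcal R$, and $\mathcal L^{r-i}$ (with $r \in \{s, s+1\}$) carry the image into $F_i V$, on which $F_i$ acts as the identity, so $F_i$ can be dropped. I expect the main obstacle to be purely notational bookkeeping: aligning the numerators (powers of $\mathcal L$) and denominator products with the precise indexing conventions of Propositions \ref{prop:EFform} and \ref{prop:FEform}, and verifying that the boundary cases $s=i$, $s=j$, and $r=i$ correspond to empty products equal to $1$. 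The only genuinely non-mechanical step is the absorption of the front $F_i$ described above.
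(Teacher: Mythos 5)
Your proposal is correct and follows essentially the same route as the paper: insert $I=\sum_k F_k$ on both sides of $A$, restrict the double sum via Lemmas \ref{lem:FE} and \ref{lem:RRmeaning2}, and evaluate each surviving term with Propositions \ref{prop:FEform}, \ref{prop:EFform} and Lemma \ref{lem:FEF}. The only (harmless) difference is cosmetic: the paper absorbs the leading $F_i$ by commuting it rightward through the powers of $\mathcal L$ using Lemma \ref{lem:LLmeaning2}, whereas you drop it at the end by observing that the image already lies in $F_iV$.
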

\begin{proof}
We have
\begin{align*}
F_i E^*_i A E^*_j &= F_i E^*_i \Biggl( \sum_{r=0}^d F_r \Biggr) A \Biggl( \sum_{s=0}^d F_s \Biggr)E^*_j \\
&=\sum_{r=0}^d \sum_{s=0}^d F_i E^*_i F_r A F_s E^*_j .
\end{align*}
In the previous double sum, for $0 \leq r,s\leq d$ the $(r,s)$-summand is zero unless
$i\leq r$ and $s \leq j$ and $r-s\in \lbrace 0,1\rbrace$. Therefore,
\begin{align}
F_i E^*_i A E^*_j &= \sum_{i \leq s \leq j} F_i E^*_i F_s A F_s E^*_j 
                            + \sum_{\stackrel{i \leq r \leq d,}{\stackrel{0\leq s \leq j, }{\stackrel{r-s=1}{}}}} F_i E^*_i F_r A F_s E^*_j.           \label{eq:sumsum}
\end{align}
We evaluate the terms in \eqref{eq:sumsum} using Lemmas \ref{lem:RRmeaning2}, \ref{lem:LLmeaning2}  and Propositions \ref{prop:FEform}, \ref{prop:EFform}.
For $i \leq s \leq j$ we have
\begin{align*}
&F_i E^*_i F_s A F_s E^*_j 
= \theta_s F_i E^*_i F_s E^*_j \\
&= \theta_s F_i E^*_i F_i \frac{ \mathcal L^{s-i}}{(\theta^*_i - \theta^*_s)(\theta^*_i-\theta^*_{s-1})\cdots (\theta^*_i - \theta^*_{i+1})} E^*_j \\
&= \theta_s F_i \frac{ \mathcal L^{s-i}}{(\theta^*_i - \theta^*_s)(\theta^*_i-\theta^*_{s-1})\cdots (\theta^*_i - \theta^*_{i+1})} E^*_j \\
&= \theta_s \frac{ \mathcal L^{s-i}}{(\theta^*_i - \theta^*_s)(\theta^*_i-\theta^*_{s-1})\cdots (\theta^*_i - \theta^*_{i+1})} F_s E^*_j \\
&= \theta_s \frac{ \mathcal L^{s-i}}{(\theta^*_i - \theta^*_s)(\theta^*_i-\theta^*_{s-1})\cdots (\theta^*_i - \theta^*_{i+1})}\; \frac{\mathcal L^{j-s}}{ (\theta^*_j - \theta^*_s)(\theta^*_j - \theta^*_{s+1})
\cdots (\theta^*_j - \theta^*_{j-1})}        F_j E^*_j. 
\end{align*}
For each ordered pair of integers $r,s$ such that $i \leq r \leq d$ and $0 \leq s \leq j$ and $r-s=1$, we have
\begin{align*}
&F_i E^*_i F_r A F_s E^*_j \\
&= F_i E^*_i F_i  \frac{ \mathcal L^{r-i}}{(\theta^*_i - \theta^*_r)(\theta^*_i-\theta^*_{r-1})\cdots (\theta^*_i - \theta^*_{i+1})} A F_s E^*_j \\
&= F_i  \frac{ \mathcal L^{r-i}}{(\theta^*_i - \theta^*_r)(\theta^*_i-\theta^*_{r-1})\cdots (\theta^*_i - \theta^*_{i+1})} A F_s E^*_j \\
&=  \frac{ \mathcal L^{r-i}}{(\theta^*_i - \theta^*_r)(\theta^*_i-\theta^*_{r-1})\cdots (\theta^*_i - \theta^*_{i+1})} F_r A F_s E^*_j \\
&=  \frac{ \mathcal L^{r-i}}{(\theta^*_i - \theta^*_r)(\theta^*_i-\theta^*_{r-1})\cdots (\theta^*_i - \theta^*_{i+1})} \mathcal R F_s E^*_j \\
&=  \frac{ \mathcal L^{r-i}}{(\theta^*_i - \theta^*_r)(\theta^*_i-\theta^*_{r-1})\cdots (\theta^*_i - \theta^*_{i+1})}\;\mathcal R   \;  \frac{\mathcal L^{j-s}}{ (\theta^*_j - \theta^*_s)(\theta^*_j - \theta^*_{s+1})\cdots (\theta^*_j - \theta^*_{j-1})}               F_jE^*_j.
\end{align*}
Evaluating \eqref{eq:sumsum} using these comments, we get the result.
\end{proof}


\noindent Our next general goal is to consider Theorem \ref{prop:comp} under the assumption $\vert j-i\vert \leq 1$.
There are three cases $j-i\in \lbrace -1,0,1\rbrace$. These cases are treated in the next three corollaries.

\begin{corollary} 
\label{cor:c1}
For $0 \leq j \leq d-1$,
\begin{align*}
F_{j+1} E^*_{j+1} A E^*_j = \mathcal R F_j E^*_j.
\end{align*}
\end{corollary}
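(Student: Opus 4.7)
The plan is to deduce Corollary \ref{cor:c1} by specializing Theorem \ref{prop:comp} to the substitution $i = j+1$ (in the theorem's notation, matching the corollary's $j$ to the theorem's $j$). The key observation is that, under this substitution, both sums in the theorem become trivial: the first is indexed over the empty range $j+1 \leq s \leq j$, while the constraints defining the second sum collapse to a single pair $(r,s)$.

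Carrying this out, I first dispose of the first sum, which contributes nothing since its index set is vacuous. For the second sum, the simultaneous constraints $j+1 \leq r \leq d$, $0 \leq s \leq j$, and $r - s = 1$ force $s + 1 \geq j + 1$, hence $s \geq j$; together with $s \leq j$ this pins down $s = j$ and $r = j+1$. I then evaluate the unique surviving summand by noting that both exponents $r - i$ and $j - s$ vanish, so the numerators $\mathcal{L}^{r-i}$ and $\mathcal{L}^{j-s}$ both equal $I$, and the two scalar denominator products are empty and hence equal $1$ by convention. The summand therefore collapses to $\mathcal{R} F_j E^*_j$, which is precisely the claim.

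There is essentially no obstacle: the proof is pure index bookkeeping together with one boundary convention. The only mildly delicate step is interpreting the denominators of Theorem \ref{prop:comp} at $r = i$ and $s = j$ as empty products; this is the same convention used implicitly in Propositions \ref{prop:FEform} and \ref{prop:EFform} at the boundary $j = i$. Should one wish to sidestep this convention entirely, a direct argument is equally short: Lemma \ref{lem:EAEp} and Lemma \ref{lem:FE} together give $F_{j+1} E^*_{j+1} A E^*_j = F_{j+1} A E^*_j$; inserting $I = \sum_k F_k$ between $A$ and $E^*_j$ and applying Lemma \ref{lem:RRmeaning2} and Lemma \ref{lem:FE} then yields $F_{j+1} A E^*_j = \mathcal{R} F_j E^*_j + \theta_{j+1} F_{j+1} E^*_j = \mathcal{R} F_j E^*_j$.
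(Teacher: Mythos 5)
Your proof is correct and follows the paper's own route exactly: the paper proves this corollary by the single line ``By Theorem \ref{prop:comp} with $i=j+1$,'' and your index bookkeeping (empty first sum, unique pair $(r,s)=(j+1,j)$ in the second, empty-product denominators) is precisely the computation that line leaves implicit. Your alternative direct argument via Lemmas \ref{lem:EAEp}, \ref{lem:FE}, and \ref{lem:RRmeaning2} is also valid, but the main argument already matches the paper.
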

\begin{proof} By Theorem \ref{prop:comp} with $i=j+1$.
\end{proof}

\begin{corollary}
\label{cor:c2}
For $1 \leq j \leq d-1$,
\begin{align*}
F_j E^*_j A E^*_j = \Biggl( \theta_j I +\frac{\mathcal R \mathcal L}{\theta^*_j-\theta^*_{j-1}} + \frac{ \mathcal L \mathcal R}{\theta^*_j-\theta^*_{j+1}}      \Biggr) F_j E^*_j.
\end{align*}
Moreover
\begin{align*}
F_0 E^*_0 A E^*_0 &= \Biggl( \theta_0 I + \frac{ \mathcal L \mathcal R}{\theta^*_0-\theta^*_{1}}      \Biggr) F_0 E^*_0, \\
F_d E^*_d A E^*_d &= \Biggl( \theta_d I +\frac{\mathcal R \mathcal L}{\theta^*_d-\theta^*_{d-1}}      \Biggr) F_d E^*_d.
\end{align*}
\end{corollary}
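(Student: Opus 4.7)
The plan is to derive Corollary \ref{cor:c2} directly from Theorem \ref{prop:comp} by specializing to $i=j$ and carefully tracking which terms survive in each of the three boundary cases.

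First I would set $i=j$ in the identity of Theorem \ref{prop:comp}. In the first (diagonal) sum the index $s$ must satisfy $j \leq s \leq j$, so $s=j$ is the only term; both products of $\mathcal{L}$ become empty and collapse to $\mathcal{L}^0 = I$ with unit denominators, leaving the contribution $\theta_j F_j E^*_j$. In the second sum the conditions $i \leq r \leq d$, $0 \leq s \leq j$, $r-s=1$ force $r \in \{j, j+1\}$: namely $(r,s)=(j,j-1)$ or $(r,s)=(j+1,j)$.

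Next I would work out each of these two summands. For $(r,s)=(j,j-1)$ the prefactor in $\mathcal{L}$ is empty while the postfactor is $\mathcal{L}/(\theta^*_j-\theta^*_{j-1})$, producing $\mathcal{R}\mathcal{L}/(\theta^*_j-\theta^*_{j-1}) \cdot F_j E^*_j$. For $(r,s)=(j+1,j)$ the prefactor is $\mathcal{L}/(\theta^*_j-\theta^*_{j+1})$ and the postfactor is empty, yielding $\mathcal{L}\mathcal{R}/(\theta^*_j-\theta^*_{j+1}) \cdot F_j E^*_j$. Summing the three contributions gives the main identity for $1 \leq j \leq d-1$.

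Finally I would handle the two boundary cases. When $j=0$ the pair $(r,s)=(0,-1)$ is excluded by $s \geq 0$, so only $(1,0)$ survives and the $\mathcal{R}\mathcal{L}$ term drops out. When $j=d$ the pair $(r,s)=(d+1,d)$ is excluded by $r \leq d$, so only $(d,d-1)$ survives and the $\mathcal{L}\mathcal{R}$ term drops out. This yields the two special formulas. I do not anticipate any genuine obstacle here; the only thing requiring care is the bookkeeping of the empty products (which equal $1$) and the confirmation that the $j=0,d$ boundary restrictions in Theorem \ref{prop:comp} match the claim exactly.
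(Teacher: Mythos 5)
Your proposal is correct and is exactly the paper's proof: the paper derives Corollary \ref{cor:c2} by setting $i=j$ in Theorem \ref{prop:comp}, and your accounting of the surviving pairs $(r,s)=(j,j-1)$, $(j+1,j)$ and of the boundary exclusions at $j=0$ and $j=d$ matches what that specialization yields.
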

\begin{proof} By Theorem \ref{prop:comp} with $i=j$.
\end{proof}

\begin{corollary}
\label{cor:c3}
 For $2 \leq j \leq d-1$,
\begin{align*}
&F_{j-1} E^*_{j-1} A E^*_j \\
&=\biggl(
\frac{\theta_j - \theta_{j-1}}{\theta^*_{j-1}-\theta^*_j} \mathcal L + \frac{\mathcal R \mathcal L^2}{(\theta^*_j - \theta^*_{j-1})(\theta^*_j-\theta^*_{j-2})} 
         - \frac{\mathcal L \mathcal R \mathcal L}{(\theta^*_{j-1}-\theta^*_j)^2}+ \frac{{\mathcal L}^2 \mathcal R}{(\theta^*_{j-1}-\theta^*_j)(\theta^*_{j-1}-\theta^*_{j+1})} 
         \Biggr) F_j E^*_j.
\end{align*}
For $d\geq 2$,
\begin{align*}
F_{0} E^*_{0} A E^*_1 
&=\biggl(
\frac{\theta_1 - \theta_{0}}{\theta^*_{0}-\theta^*_1} \mathcal L 
         - \frac{\mathcal L \mathcal R \mathcal L}{(\theta^*_{0}-\theta^*_1)^2}+ \frac{{\mathcal L}^2 \mathcal R}{(\theta^*_{0}-\theta^*_1)(\theta^*_{0}-\theta^*_{2})} 
         \Biggr) F_1 E^*_1, \\
F_{d-1} E^*_{d-1} A E^*_d 
&=\biggl(
\frac{\theta_d - \theta_{d-1}}{\theta^*_{d-1}-\theta^*_d} \mathcal L + \frac{\mathcal R \mathcal L^2}{(\theta^*_d - \theta^*_{d-1})(\theta^*_d-\theta^*_{d-2})} 
         - \frac{\mathcal L \mathcal R \mathcal L}{(\theta^*_{d-1}-\theta^*_d)^2}
         \Biggr) F_d E^*_d.
\end{align*}
For $d=1$,
\begin{align*}
F_{0} E^*_{0} A E^*_1 
&=\biggl(
\frac{\theta_1 - \theta_{0}}{\theta^*_{0}-\theta^*_1} \mathcal L 
         - \frac{\mathcal L \mathcal R \mathcal L}{(\theta^*_{0}-\theta^*_1)^2}
         \Biggr) F_1 E^*_1.
\end{align*}

\end{corollary}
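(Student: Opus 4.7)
The plan is to apply Theorem \ref{prop:comp} with $i = j-1$ and carefully enumerate which summands survive under the index constraints. Throughout, I will use the conventions that an empty product equals $1$ and $\mathcal{L}^0 = \mathcal{R}^0 = I$, so terms with $r = i$ or $s = j$ simplify automatically.

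For the first sum $\sum_{i \leq s \leq j}$, setting $i = j-1$ leaves exactly two indices $s \in \{j-1, j\}$. The $s = j-1$ term contributes $\theta_{j-1} \mathcal{L}/(\theta^*_j - \theta^*_{j-1})$, while the $s = j$ term contributes $\theta_j \mathcal{L}/(\theta^*_{j-1} - \theta^*_j)$. Combining gives the coefficient $(\theta_j - \theta_{j-1})/(\theta^*_{j-1} - \theta^*_j)$ times $\mathcal{L}$, matching the first term in each of the three stated formulas. This part is valid whenever $1 \leq j \leq d$, independent of boundary considerations.

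For the second sum, setting $i = j-1$ gives three candidate pairs $(r,s)$ with $r - s = 1$, namely $(j-1, j-2)$, $(j, j-1)$, $(j+1, j)$, subject to $i \leq r \leq d$ and $0 \leq s \leq j$. The pair $(j-1, j-2)$ requires $j \geq 2$ and produces $\mathcal{R}\mathcal{L}^2 / ((\theta^*_j - \theta^*_{j-1})(\theta^*_j - \theta^*_{j-2}))$ (here $r = i$ collapses the left factor to $I$). The pair $(j, j-1)$ is always admissible for $1 \leq j \leq d$ and gives $\mathcal{L}\mathcal{R}\mathcal{L}/((\theta^*_{j-1} - \theta^*_j)(\theta^*_j - \theta^*_{j-1})) = -\mathcal{L}\mathcal{R}\mathcal{L}/(\theta^*_{j-1} - \theta^*_j)^2$. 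The pair $(j+1, j)$ requires $j \leq d-1$ and yields $\mathcal{L}^2 \mathcal{R} / ((\theta^*_{j-1} - \theta^*_j)(\theta^*_{j-1} - \theta^*_{j+1}))$ (here $s = j$ collapses the right factor to $I$).

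Summing all contributions and handling the boundary cases gives the three displayed formulas: for $2 \leq j \leq d-1$ all three pairs contribute; for $j = 1$ with $d \geq 2$ the pair $(j-1, j-2) = (0, -1)$ is disallowed; for $j = d$ with $d \geq 2$ the pair $(j+1, j) = (d+1, d)$ is disallowed; and for $d = 1, j = 1$ both extreme pairs are disallowed, leaving only the middle term. There is no real obstacle to the proof—the only mild hazard is bookkeeping, in particular tracking the empty products that appear when $r = i$ or $s = j$ and getting the sign from $(\theta^*_{j-1} - \theta^*_j)(\theta^*_j - \theta^*_{j-1})$ correct.
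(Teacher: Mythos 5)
Your proposal is correct and follows exactly the paper's route: the paper's proof of this corollary is simply ``By Theorem \ref{prop:comp} with $i=j-1$,'' and you have carried out that specialization accurately, including the empty-product conventions, the sign from $(\theta^*_{j-1}-\theta^*_j)(\theta^*_j-\theta^*_{j-1})=-(\theta^*_{j-1}-\theta^*_j)^2$, and the boundary constraints $s\geq 0$ and $r\leq d$ that produce the three separate cases.
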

\begin{proof} By Theorem \ref{prop:comp} with $i=j-1$.
\end{proof} 

\noindent In the next three results, we interpret Corollaries \ref{cor:c1}--\ref{cor:c3} using commuting diagrams.

\begin{theorem}
\label{thm:m1} For $0 \leq j \leq d-1$
the following diagram commutes:
\begin{align*}
{\begin{CD}
E^*_jV @> R  >>
               E^*_{j+1}V
              \\
         @V \sum_{\ell=0}^d F_\ell E^*_\ell  VV                   @VV \sum_{\ell=0}^d F_\ell E^*_\ell V \\
         F_jV@>>\mathcal R  >
                                 F_{j+1}V
                        \end{CD}}
\end{align*}
\end{theorem}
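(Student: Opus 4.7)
The plan is to reduce the commutativity of the diagram to Corollary \ref{cor:c1}, which was set up precisely for this purpose. First I would pick an arbitrary $v \in E^*_jV$ and compute the two compositions explicitly, using the fact that the vertical arrows simplify dramatically on $E^*_j V$ and on $E^*_{j+1}V$ because $E^*_\ell E^*_k = \delta_{k,\ell} E^*_k$.

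More concretely, on the right-hand path I would first apply $R$ to $v$. By Lemma \ref{lem:RFLmeaning} we have $Rv \in E^*_{j+1}V$, so only the $\ell = j+1$ term of $\sum_{\ell=0}^d F_\ell E^*_\ell$ survives, yielding $F_{j+1} E^*_{j+1} R v = F_{j+1} E^*_{j+1} R E^*_j v$. Then I would use Lemma \ref{lem:EsRFL} to replace $R E^*_j$ by $E^*_{j+1} A E^*_j$, so the composition equals $F_{j+1} E^*_{j+1} A E^*_j v$. On the left-hand path, only the $\ell = j$ term survives, giving $\mathcal R F_j E^*_j v$.

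Thus the commutativity of the diagram reduces to the identity
\begin{align*}
F_{j+1} E^*_{j+1} A E^*_j = \mathcal R F_j E^*_j \qquad (0 \leq j \leq d-1),
\end{align*}
which is exactly Corollary \ref{cor:c1}. I expect no real obstacle here, since the diagram is essentially a restatement of that corollary in categorical language; the only point requiring care is the bookkeeping that shows each of the sums $\sum_{\ell=0}^d F_\ell E^*_\ell$ collapses to a single term when applied to a vector in $E^*_jV$ or $E^*_{j+1}V$.
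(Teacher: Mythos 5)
Your proof is correct and takes essentially the same route as the paper, which simply states that the theorem is a reformulation of Corollary \ref{cor:c1}; you have just filled in the routine bookkeeping showing that each vertical map collapses to a single term $F_j E^*_j$ or $F_{j+1}E^*_{j+1}$ on the relevant eigenspace.
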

\begin{proof} This is a reformulation of Corollary \ref{cor:c1}.
\end{proof}

\begin{theorem}
\label{thm:m2} For $1 \leq j \leq d-1$
the following diagram commutes:
\begin{align*}
{\begin{CD}
E^*_jV @> F  >>
               E^*_{j}V
              \\
         @V \sum_{\ell=0}^d F_\ell E^*_\ell  VV                   @VV \sum_{\ell=0}^d F_\ell E^*_\ell V \\
         F_jV@>>\theta_j I + \frac{\mathcal R \mathcal L}{\theta^*_j- \theta^*_{j-1}} + \frac{\mathcal L \mathcal R}{\theta^*_j-\theta^*_{j+1}} >
                                 F_{j}V
                        \end{CD}}
\end{align*}
Moreover, the following diagrams commute:
\begin{align*}
{\begin{CD}
E^*_0V @> F  >>
               E^*_{0}V
              \\
         @V \sum_{\ell=0}^d F_\ell E^*_\ell  VV                   @VV \sum_{\ell=0}^d F_\ell E^*_\ell V \\
         F_0V@>>\theta_0 I + \frac{\mathcal L \mathcal R}{\theta^*_0-\theta^*_{1}} >
                                 F_{0}V
                        \end{CD}}
\end{align*}
\begin{align*}
{\begin{CD}
E^*_dV @> F  >>
               E^*_{d}V
              \\
         @V \sum_{\ell=0}^d F_\ell E^*_\ell  VV                   @VV \sum_{\ell=0}^d F_\ell E^*_\ell V \\
         F_dV@>>\theta_d I + \frac{\mathcal R \mathcal L}{\theta^*_d- \theta^*_{d-1}}  >
                                 F_{d}V
                        \end{CD}}
\end{align*}

\end{theorem}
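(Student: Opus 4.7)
The plan is to reduce each commuting diagram to a corresponding identity already established in Corollary \ref{cor:c2}, by chasing an arbitrary $v \in E^*_j V$ around the square. Fix $j$ with $1 \leq j \leq d-1$ and let $v \in E^*_j V$. Tracing down and then right: by Lemma \ref{lem:FE}, the map $\sum_{\ell=0}^d F_\ell E^*_\ell$ acts on $E^*_j V$ as $F_j E^*_j$ (all $\ell \neq j$ contribute zero), so the bottom-left path sends $v$ to $\bigl(\theta_j I + \frac{\mathcal R \mathcal L}{\theta^*_j - \theta^*_{j-1}} + \frac{\mathcal L \mathcal R}{\theta^*_j - \theta^*_{j+1}}\bigr) F_j E^*_j v$.

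Tracing right and then down: by Lemma \ref{lem:EsRFL} we have $F E^*_j = E^*_j A E^*_j$, which already lies in $E^*_j V$ by Lemma \ref{lem:RFLmeaning}, so applying $\sum_\ell F_\ell E^*_\ell$ afterward collapses again to $F_j E^*_j$ by Lemma \ref{lem:FE}. Thus the top-right path sends $v$ to $F_j E^*_j A E^*_j v$. Equating the two paths, commutativity of the diagram is precisely the assertion
\begin{align*}
F_j E^*_j A E^*_j = \Biggl( \theta_j I + \frac{\mathcal R \mathcal L}{\theta^*_j - \theta^*_{j-1}} + \frac{\mathcal L \mathcal R}{\theta^*_j - \theta^*_{j+1}} \Biggr) F_j E^*_j,
\end{align*}
which is exactly Corollary \ref{cor:c2}.

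The two boundary diagrams are handled in the same way. For $j=0$, the map $F$ on $E^*_0 V$ equals $E^*_0 A E^*_0$, and chasing through gives $F_0 E^*_0 A E^*_0 = \bigl(\theta_0 I + \frac{\mathcal L \mathcal R}{\theta^*_0 - \theta^*_1}\bigr) F_0 E^*_0$, again supplied by the boundary case of Corollary \ref{cor:c2}. For $j=d$ one uses the analogous boundary identity. Since no new computations are needed beyond Corollary \ref{cor:c2} and the bookkeeping lemmas cited above, there is no real obstacle; the only step requiring mild care is verifying that $F E^*_j V \subseteq E^*_j V$ so that the post-composition with $\sum_\ell F_\ell E^*_\ell$ collapses cleanly to $F_j E^*_j$, which is immediate from Lemma \ref{lem:RFLmeaning}.
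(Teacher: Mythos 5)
Your proof is correct and is essentially the paper's proof: the paper simply states that the theorem is a reformulation of Corollary \ref{cor:c2}, and your diagram chase spells out exactly that reformulation. One tiny quibble: the collapse of $\sum_{\ell=0}^d F_\ell E^*_\ell$ to $F_j E^*_j$ on $E^*_jV$ follows from the orthogonality $E^*_\ell E^*_j = \delta_{\ell,j}E^*_j$ of the primitive idempotents rather than from Lemma \ref{lem:FE}, but this does not affect the argument.
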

\begin{proof}
This is a reformulation of Corollary \ref{cor:c2}.
\end{proof}

\begin{theorem}
\label{thm:m3} For $2 \leq j\leq d-1$
the following diagram commutes:
\begin{align*}
{\begin{CD}
E^*_jV @> L  >>
               E^*_{j-1}V
              \\
         @V \sum_{\ell=0}^d F_\ell E^*_\ell  VV                   @VV \sum_{\ell=0}^d F_\ell E^*_\ell V \\
         F_j V@>>\frac{\theta_j - \theta_{j-1}}{\theta^*_{j-1}-\theta^*_j} \mathcal L + \frac{\mathcal R \mathcal L^2}{(\theta^*_j - \theta^*_{j-1})(\theta^*_j-\theta^*_{j-2})} 
         - \frac{\mathcal L \mathcal R \mathcal L}{(\theta^*_{j-1}-\theta^*_j)^2}+ \frac{{\mathcal L}^2 \mathcal R}{(\theta^*_{j-1}-\theta^*_j)(\theta^*_{j-1}-\theta^*_{j+1})} >
                                 F_{j-1}V
                        \end{CD}}
\end{align*}
For $d\geq 2$ the following diagrams commute:
\begin{align*}
{\begin{CD}
E^*_1V @> L  >>
               E^*_{0}V
              \\
         @V \sum_{\ell=0}^d F_\ell E^*_\ell  VV                   @VV \sum_{\ell=0}^d F_\ell E^*_\ell V \\
         F_1 V@>>\frac{\theta_1 - \theta_{0}}{\theta^*_{0}-\theta^*_1} \mathcal L 
         - \frac{\mathcal L \mathcal R \mathcal L}{(\theta^*_{0}-\theta^*_1)^2}+ \frac{{\mathcal L}^2 \mathcal R}{(\theta^*_{0}-\theta^*_1)(\theta^*_{0}-\theta^*_{2})} >
                                 F_{0}V
                        \end{CD}}
\end{align*}
\begin{align*}
{\begin{CD}
E^*_dV @> L  >>
               E^*_{d-1}V
              \\
         @V \sum_{\ell=0}^d F_\ell E^*_\ell  VV                   @VV \sum_{\ell=0}^d F_\ell E^*_\ell V \\
         F_d V@>>\frac{\theta_d - \theta_{d-1}}{\theta^*_{d-1}-\theta^*_d} \mathcal L + \frac{\mathcal R \mathcal L^2}{(\theta^*_d - \theta^*_{d-1})(\theta^*_d-\theta^*_{d-2})} 
         - \frac{\mathcal L \mathcal R \mathcal L}{(\theta^*_{d-1}-\theta^*_d)^2} >
                                 F_{d-1}V
                        \end{CD}}
\end{align*}
For $d=1$ the following diagram commutes:
\begin{align*}
{\begin{CD}
E^*_1V @> L  >>
               E^*_{0}V
              \\
         @V \sum_{\ell=0}^d F_\ell E^*_\ell  VV                   @VV \sum_{\ell=0}^d F_\ell E^*_\ell V \\
         F_1 V@>>\frac{\theta_1 - \theta_{0}}{\theta^*_{0}-\theta^*_1} \mathcal L 
         - \frac{\mathcal L \mathcal R \mathcal L}{(\theta^*_{0}-\theta^*_1)^2} >
                                 F_{0}V
                        \end{CD}}
\end{align*}

\end{theorem}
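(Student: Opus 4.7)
The approach is to recognize that Theorem \ref{thm:m3} is a direct restatement of Corollary \ref{cor:c3} in the language of commuting diagrams, where the vertical bijections are $\sum_{\ell=0}^d F_\ell E^*_\ell$ from Lemma \ref{lem:Bij}(i). My plan is to chase a vector $v \in E^*_j V$ around the diagram, identifying each composition with one side of the corresponding equation in Corollary \ref{cor:c3}.

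For the right-then-down path, I would argue as follows. By Lemma \ref{lem:RFLmeaning}, $Lv \in E^*_{j-1}V$, so the identity $E^*_\ell E^*_{j-1} = \delta_{\ell,j-1}E^*_{j-1}$ collapses the sum $\sum_{\ell=0}^d F_\ell E^*_\ell Lv$ to $F_{j-1} E^*_{j-1} Lv = F_{j-1} Lv$. Lemma \ref{lem:EsRFL} gives $Lv = L E^*_j v = E^*_{j-1} A E^*_j v$, so the right-then-down composition applied to $v$ equals $F_{j-1} E^*_{j-1} A E^*_j v$. For the down-then-right path, the same orthogonality collapses $\sum_{\ell=0}^d F_\ell E^*_\ell v$ to $F_j E^*_j v$, and applying the bottom map yields $(\text{bottom map})\, F_j E^*_j v$.

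Thus the diagram commutes on $E^*_j V$ if and only if $F_{j-1} E^*_{j-1} A E^*_j v = (\text{bottom map})\, F_j E^*_j v$ for all $v \in E^*_j V$, which is exactly the statement of Corollary \ref{cor:c3} for the relevant value of $j$. Since both sides of that identity carry the factor $E^*_j$ on the right, they already vanish on $E^*_k V$ for $k \neq j$; combined with the decomposition $V = \sum_{k=0}^d E^*_k V$, this promotes the equality on $E^*_j V$ to equality of maps on all of $V$, and so the full diagram commutes.

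The only bookkeeping concerns matching the three boundary cases, namely the main diagram for $2 \leq j \leq d-1$, the two diagrams for $j=1$ and $j=d$ when $d\geq 2$, and the single diagram when $d=1$. Each of these mirrors, term by term, the corresponding case in Corollary \ref{cor:c3}, where certain fractions are omitted because $\theta^*_{-1}$, $\theta^*_{d+1}$, or $\theta^*_{d-2}$ would occur. The diagram-chase argument is identical in each case, so no essential obstacle arises beyond ensuring that one applies the correct subcase of Corollary \ref{cor:c3} to each diagram.
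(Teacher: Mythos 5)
Your proposal is correct and takes essentially the same route as the paper, whose proof simply states that the theorem is a reformulation of Corollary \ref{cor:c3}; your diagram chase (collapsing $\sum_{\ell}F_\ell E^*_\ell$ via $LE^*_j=E^*_{j-1}AE^*_j$ on one path and $F_jE^*_j$ on the other) just makes that reformulation explicit.
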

\begin{proof}
This is a reformulation of Corollary \ref{cor:c3}.
\end{proof}

\section{How $\mathcal R, \mathcal L$ are related}
We continue to discuss the tridiagonal system $\Phi = ( A; \lbrace E_i \rbrace_{i=0}^d; A^*; \lbrace E^*_i \rbrace_{i=0}^d )$ on $V$.
In this section we obtain some relations involving $\mathcal R$, $\mathcal L$. To this end,
we consider Theorem  \ref{prop:comp} under the assumption $j-i\geq 2$.

\begin{theorem}
\label{thm:rest}
For $0 \leq i,j\leq d$ such that $j-i \geq 2$, the following holds on $F_jV$:
\begin{align*}
0&= \sum_{i \leq s \leq j} \theta_s \frac{ \mathcal L^{s-i}}{ (\theta^*_i-\theta^*_s) (\theta^*_i - \theta^*_{s-1})\cdots (\theta^*_i - \theta^*_{i+1})} \; \frac{\mathcal L^{j-s}}{(\theta^*_j - \theta^*_s)(\theta^*_j - \theta^*_{s+1})\cdots (\theta^*_j - \theta^*_{j-1})} \\
&+ \sum_{\stackrel{i \leq r\leq d, }{\stackrel{0\leq s \leq j, }{\stackrel{r-s=1}{}}}}
 \frac{ \mathcal L^{r-i}}{ (\theta^*_i-\theta^*_r) (\theta^*_i - \theta^*_{r-1})\cdots (\theta^*_i - \theta^*_{i+1})} \; \mathcal R \; \frac{\mathcal L^{j-s}}{(\theta^*_j - \theta^*_s)(\theta^*_j - \theta^*_{s+1})\cdots (\theta^*_j - \theta^*_{j-1})}.
\end{align*}
\end{theorem}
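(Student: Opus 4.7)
The plan is to deduce Theorem~\ref{thm:rest} directly from Theorem~\ref{prop:comp}. The hypothesis $j - i \geq 2$ means $|i - j| > 1$, so Lemma~\ref{lem:EAEp}(i) gives $E^*_i A E^*_j = 0$, and therefore $F_i E^*_i A E^*_j = 0$. On the other hand, Theorem~\ref{prop:comp} expresses $F_i E^*_i A E^*_j$ in a form in which every summand on the right-hand side ends with the factor $F_j E^*_j$; thus it can be written as $\Omega \, F_j E^*_j$, where $\Omega$ is precisely the operator-valued sum appearing on the right-hand side of the identity we wish to prove. Combining the two observations yields
\begin{align*}
0 = \Omega \, F_j E^*_j.
\end{align*}

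The remaining step is to promote this to the assertion $\Omega|_{F_j V} = 0$, and for this I would verify that the image of $F_j E^*_j : V \to V$ equals $F_j V$. If $v \in E^*_k V$ with $k \neq j$, then $E^*_j v = 0$, hence $F_j E^*_j v = 0$. For $v \in E^*_j V$ one has $E^*_\ell v = \delta_{\ell, j} v$, so $\bigl( \sum_{\ell = 0}^d F_\ell E^*_\ell \bigr) v = F_j E^*_j v$; by Lemma~\ref{lem:Bij}(i) this map carries $E^*_j V$ bijectively onto $F_j V$. Consequently $F_j E^*_j$ surjects onto $F_j V$, so $\Omega \, F_j E^*_j = 0$ forces $\Omega$ to vanish on $F_j V$, which is the desired conclusion.

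I do not anticipate any genuine obstacle. Once Theorem~\ref{prop:comp} is available and the surjectivity of $F_j E^*_j$ onto $F_j V$ is noted, the statement is essentially immediate; the only bookkeeping required is confirming that after stripping off the trailing factor $F_j E^*_j$ from Theorem~\ref{prop:comp}, the remaining expression matches the one displayed in Theorem~\ref{thm:rest} term for term, which is a direct inspection.
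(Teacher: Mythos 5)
Your proposal is correct and follows essentially the same route as the paper: both deduce $0 = S_{i,j}\,F_jE^*_j$ from $E^*_iAE^*_j=0$ and Theorem~\ref{prop:comp}, then strip off the trailing factor. The only cosmetic difference is the last step, where the paper right-multiplies by $F_j$ and invokes $F_jE^*_jF_j=F_j$ (Lemma~\ref{lem:FEF}), while you instead note via Lemma~\ref{lem:Bij}(i) that $F_jE^*_j$ maps onto $F_jV$; the two justifications are equivalent.
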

\begin{proof} Let $S_{i,j}$ denote the expression on the right-hand side of the previous equation. We show that $0=S_{i,j}$ on $F_jV$. It suffices to show that $0=S_{i,j}F_j$.
Note that $E^*_i A E^*_j =0$ because $j-i \geq 2$. By this and Theorem \ref{prop:comp}, we obtain $0 = S_{i,j} F_j E^*_j$. Therefore
$0 = S_{i,j} F_j E^*_j F_j = S_{i,j} F_j$ in view of Lemma \ref{lem:FEF}.
\end{proof}

\begin{theorem}
\label{thm:restDual}
For $0 \leq i,j\leq d$ such that $j-i \geq 2$, the following holds on $F_iV$:
\begin{align*}
0&= \sum_{i \leq s \leq j} \theta^*_s 
\frac{\mathcal R^{j-s}}{(\theta_j - \theta_s)(\theta_j - \theta_{s+1}) \cdots (\theta_j - \theta_{j-1})} \;
\frac{ \mathcal R^{s-i}}{ (\theta_i-\theta_s) (\theta_i - \theta_{s-1})\cdots (\theta_i - \theta_{i+1})}
 \\
&+ \sum_{\stackrel{i \leq r\leq d, }{\stackrel{0\leq s \leq j, }{\stackrel{r-s=1}{}}}}
 \frac{\mathcal R^{j-s}}{(\theta_j - \theta_s)(\theta_j - \theta_{s+1})\cdots (\theta_j - \theta_{j-1})}
 \; \mathcal L \; 
 \frac{ \mathcal R^{r-i}}{ (\theta_i-\theta_r)(\theta_i - \theta_{r-1}) \cdots (\theta_i - \theta_{i+1})}.
\end{align*}
\end{theorem}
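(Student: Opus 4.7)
The plan is to deduce Theorem \ref{thm:restDual} from Theorem \ref{thm:rest} by applying the latter to the tridiagonal system $\Phi^\times = (A^*; \lbrace E^*_{d-\ell} \rbrace_{\ell=0}^d; A; \lbrace E_{d-\ell} \rbrace_{\ell=0}^d)$ introduced in the proof of Corollary \ref{cor:rFE}. Under this duality the roles of $A$ and $A^*$ are interchanged and the two orderings are reversed, so the eigenvalue sequence of $\Phi^\times$ is $\lbrace \theta^*_{d-\ell}\rbrace_{\ell=0}^d$ and the dual eigenvalue sequence is $\lbrace \theta_{d-\ell}\rbrace_{\ell=0}^d$.

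First I would verify how the $\Phi$-split decomposition transforms under the swap. A short calculation using \eqref{eq:usum1}, \eqref{eq:usum2} gives $U^\times_\ell = U_{d-\ell}$ for $0 \leq \ell \leq d$, so the corresponding projectors satisfy $F^\times_\ell = F_{d-\ell}$. By Definition \ref{def:RRLL} we then obtain
\begin{align*}
\mathcal R^\times = A^* - \sum_{\ell=0}^d \theta^*_{d-\ell} F_{d-\ell} = \mathcal L, \qquad \mathcal L^\times = \mathcal R.
\end{align*}
Next, given $0 \leq i,j \leq d$ with $j-i \geq 2$, I would set $i' = d-j$ and $j' = d-i$ so that $j'-i' = j-i \geq 2$, and invoke Theorem \ref{thm:rest} for $\Phi^\times$ at indices $i',j'$. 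The resulting identity holds on $F^\times_{j'}V = F_iV$. Performing the change of summation variable $s \mapsto d-s$ in the first sum, and $r \mapsto d-s$, $s \mapsto d-r$ in the second (which preserves $r-s=1$ and converts the range constraints into those of Theorem \ref{thm:restDual}), and substituting $\mathcal R^\times = \mathcal L$, $\mathcal L^\times = \mathcal R$, $\theta^\times_s = \theta^*_{d-s}$, $\theta^{*\times}_\ell = \theta_{d-\ell}$, should turn the statement into exactly Theorem \ref{thm:restDual}.

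The main obstacle is bookkeeping: one must check that each product of denominators transforms correctly under the substitution, e.g. that $(\theta^{*\times}_{i'} - \theta^{*\times}_s)\cdots(\theta^{*\times}_{i'} - \theta^{*\times}_{i'+1})$ becomes $(\theta_j - \theta_s)\cdots(\theta_j - \theta_{j-1})$, and symmetrically for the $\theta^{*\times}_{j'}$ product. One subtlety worth noting is that, because the substitution reverses the ordering of indices, the two large factors in each summand effectively swap left and right — the factor built from $i$ appears on the right of the dual identity and the factor built from $j$ on the left, which is exactly the form displayed in Theorem \ref{thm:restDual}. Once this index-chase is verified, the theorem follows at once.
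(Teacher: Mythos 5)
Your proposal is correct and coincides with the paper's own proof, which likewise obtains Theorem \ref{thm:restDual} by applying Theorem \ref{thm:rest} to the tridiagonal system $\Phi^\times = (A^*; \lbrace E^*_{d-\ell} \rbrace_{\ell=0}^d; A; \lbrace E_{d-\ell} \rbrace_{\ell=0}^d)$; your identifications $F^\times_\ell = F_{d-\ell}$, $\mathcal R^\times = \mathcal L$, $\mathcal L^\times = \mathcal R$ and the index substitution $i'=d-j$, $j'=d-i$ all check out. The paper leaves the bookkeeping implicit, so your explicit verification is simply a more detailed rendering of the same argument.
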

\begin{proof} Apply Theorem \ref{thm:rest} to the tridiagonal system
 $\Phi^\times = (A^*; \lbrace E^*_{d-\ell} \rbrace_{\ell=0}^d; A; \lbrace E_{d-\ell} \rbrace_{\ell=0}^d)$.
\end{proof}
\noindent  In \cite[Theorem~12.1]{TD00},  the tridiagonal relations are used to derive some relations involving $\mathcal R, \mathcal L$.
In the next result,  we show that the same relations involving $\mathcal R, \mathcal L$ follow from Theorems \ref{thm:rest}, \ref{thm:restDual} with $j-i=2$.

\begin{corollary}
\label{lem:RRRL}
{\rm (See \cite[Theorem~12.1]{TD00}.)}
For $2 \leq j \leq d$ the map
\begin{align} \label{eq:omega1}
\mathcal R \mathcal L^3 - (\beta+1)\mathcal L\mathcal R\mathcal L^2+(\beta+1)\mathcal L^2\mathcal R\mathcal L-\mathcal L^3\mathcal R - (\beta+1)e_j \mathcal L^2
\end{align}
vanishes on $F_{j}V$
and the map
\begin{align} \label{eq:omega2}
\mathcal R^3 \mathcal L - (\beta+1)\mathcal R^2 \mathcal L \mathcal R+(\beta+1) \mathcal R\mathcal L \mathcal R^2- \mathcal L\mathcal R^3 - (\beta+1)e_j \mathcal R^2
\end{align}
vanishes on $F_{j-2}V$, where 
\begin{align}
e_j = (\theta_{j-1}-\theta_{j-2})(\theta^*_{j-1} - \theta^*_{j-2})-(\theta_{j-1} - \theta_j)(\theta^*_{j-1} - \theta^*_j).                \label{eq:Omega}
\end{align}
\end{corollary}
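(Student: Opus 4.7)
The plan is to specialize Theorem \ref{thm:rest} to the case $j-i=2$, clear denominators, and recognize the resulting identity on $F_jV$ as \eqref{eq:omega1}; the second claim will follow by the same argument applied to Theorem \ref{thm:restDual}. With the relabeling $i=j-2$ (so that the corollary's $j$ matches the theorem's $j$), the first sum in Theorem \ref{thm:rest} contributes only scalar multiples of $\mathcal L^2$, while the second sum (indexed by $r-s=1$) contributes the four noncommutative monomials $\mathcal R\mathcal L^3$, $\mathcal L\mathcal R\mathcal L^2$, $\mathcal L^2\mathcal R\mathcal L$, $\mathcal L^3\mathcal R$, coming from $(r,s)=(j-2,j-3),(j-1,j-2),(j,j-1),(j+1,j)$ respectively. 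The boundary cases $j=2$ and $j=d$ cause the $(r,s)=(j-2,j-3)$ or $(j+1,j)$ term to drop out of the sum, but the corresponding monomial $\mathcal R\mathcal L^3$ or $\mathcal L^3\mathcal R$ in \eqref{eq:omega1} vanishes identically on $F_jV$ by Lemma \ref{lem:cRLnil} and Lemma \ref{lem:RRmeaning}, so the target identity is unaffected.

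The key step is to multiply the identity from Theorem \ref{thm:rest} by the scalar
\[
D=-(\beta+1)(\theta^*_j-\theta^*_{j-2})(\theta^*_j-\theta^*_{j-1})(\theta^*_{j-2}-\theta^*_{j-1}),
\]
and then to use Lemma \ref{lem:beta}(i), which supplies the crucial simplifications $\theta^*_j-\theta^*_{j-3}=(\beta+1)(\theta^*_{j-1}-\theta^*_{j-2})$ and $\theta^*_{j-2}-\theta^*_{j+1}=(\beta+1)(\theta^*_{j-1}-\theta^*_j)$ (valid even in the boundary cases via Definition \ref{def:extend}). A direct bookkeeping then shows that the coefficients of $\mathcal R\mathcal L^3$, $\mathcal L\mathcal R\mathcal L^2$, $\mathcal L^2\mathcal R\mathcal L$, $\mathcal L^3\mathcal R$ reduce to $1$, $-(\beta+1)$, $+(\beta+1)$, $-1$, respectively, matching \eqref{eq:omega1}.

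It remains to identify the resulting $\mathcal L^2$ coefficient as $-(\beta+1)e_j$. After multiplication by $D$, the first sum of Theorem \ref{thm:rest} contributes
\[
-(\beta+1)\bigl[\theta_{j-2}(\theta^*_{j-2}-\theta^*_{j-1})+\theta_{j-1}(\theta^*_j-\theta^*_{j-2})-\theta_j(\theta^*_j-\theta^*_{j-1})\bigr]\mathcal L^2,
\]
and a routine expansion shows that the bracketed scalar equals $e_j$ as given in \eqref{eq:Omega}. This completes the identification of \eqref{eq:omega1} on $F_jV$; the vanishing of \eqref{eq:omega2} on $F_{j-2}V$ follows by the same argument starting from Theorem \ref{thm:restDual}. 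The main obstacle is the coefficient bookkeeping rather than any conceptual difficulty: once $D$ is chosen correctly, Lemma \ref{lem:beta}(i) is precisely what is needed to collapse the four leading coefficients to the required values, and the $e_j$ identification is a short algebraic check.
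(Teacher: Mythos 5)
Your proposal is correct and follows essentially the same route as the paper: set $i=j-2$ in Theorem \ref{thm:rest}, multiply by the scalar $(\beta+1)(\theta^*_{j-2}-\theta^*_j)(\theta^*_{j-2}-\theta^*_{j-1})(\theta^*_j-\theta^*_{j-1})$ (which agrees with your $D$ up to reordering of signed factors), use \eqref{eq:beta} to collapse the four leading coefficients, and dispose of the missing boundary monomials by noting that $\mathcal R\mathcal L^3$ and $\mathcal L^3\mathcal R$ vanish on $F_jV$ when $j=2$ or $j=d$ — exactly the role of the paper's $L^\vee_j$ and $H^\vee_j$. The coefficient bookkeeping and the identification of the $\mathcal L^2$ coefficient with $-(\beta+1)e_j$ check out.
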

\begin{proof} We verify the first assertion. 
Let $\Omega_j$ denote the expression in \eqref{eq:omega1}.
 We show that $\Omega_j=0$ on $F_jV$. 
  For notational convenience, define
 \begin{align*}
  L_j = \begin{cases} \frac{\mathcal R \mathcal L^3} { (\theta^*_j-\theta^*_{j-3})(\theta^*_j - \theta^*_{j-2})(\theta^*_j - \theta^*_{j-1})} & {\rm if} \;\; j\geq 3, \\
        0 & {\rm if} \;\;j=2
 \end{cases}
 \qquad \qquad  L^\vee_j = \begin{cases} 0 & {\rm if} \;\; j\geq 3, \\
        \mathcal R \mathcal L^3  & {\rm if} \;\;j=2
 \end{cases}
 \end{align*}
  \begin{align*}
 H_j = \begin{cases} \frac{\mathcal L^3\mathcal R}{(\theta^*_{j-2} - \theta^*_{j+1})(\theta^*_{j-2} - \theta^*_j)(\theta^*_{j-2} - \theta^*_{j-1})} & {\rm if} \;\; j\leq d-1, \\
        0 & {\rm if} \;\;j=d
 \end{cases}
 \qquad   H^\vee_j = \begin{cases} 0 & {\rm if} \;\; j\leq d-1, \\
       \mathcal L^3 \mathcal R & {\rm if} \;\;j=d.
 \end{cases}
  \end{align*}
 Setting $i=j-2$ in Theorem \ref{thm:rest} we find that on $F_jV$,
\begin{align*}
0&=\theta_{j-2} \frac{\mathcal L^2}{ (\theta^*_j-\theta^*_{j-2})(\theta^*_j-\theta^*_{j-1})}
+
\theta_{j-1} \frac{\mathcal L}{\theta^*_{j-2}-\theta^*_{j-1}} \, \frac{\mathcal L}{\theta^*_j - \theta^*_{j-1}}
\\
&+
\theta_j \frac{\mathcal L^2}{(\theta^*_{j-2}-\theta^*_{j})(\theta^*_{j-2}-\theta^*_{j-1})}
+
L_j 
+
\frac{\mathcal L}{\theta^*_{j-2}-\theta^*_{j-1}} \,\mathcal R \,\frac{\mathcal L^2}{(\theta^*_j - \theta^*_{j-2})(\theta^*_j - \theta^*_{j-1})}
\\
&+
\frac{\mathcal L^2}{(\theta^*_{j-2} - \theta^*_j)(\theta^*_{j-2} - \theta^*_{j-1})}\,\mathcal R \,\frac{\mathcal L}{ \theta^*_j - \theta^*_{j-1}}
+ H_j. 
\end{align*}
Let $S_j$ denote the expression on the right-hand side of the previous equation.
We claim that
\begin{align*}
\Omega_j = S_j  (\beta+1) ( \theta^*_{j-2}-\theta^*_j)(\theta^*_{j-2}-\theta^*_{j-1}) (\theta^*_j - \theta^*_{j-1}) + L^\vee_j - H^\vee_j.
\end{align*}
To prove the claim, we break the argument into four cases
 \begin{align*}
  3 \leq j \leq d-1, \qquad 2=j \leq d-1, \qquad 3 \leq j=d, \qquad 2=j=d.
  \end{align*}
In each case, the claim is proved by direct calculation using \eqref{eq:beta}.
We now use the claim.
 On $F_jV$ we have  $S_j=0$ and $L^\vee_j=0$ and $H^\vee_j=0$. By these comments and the claim, we obtain $\Omega_j=0$ on $F_jV$.
 We have verified the first assertion. The second assertion is similarly obtained using Theorem \ref{thm:restDual}.
\end{proof}

\section{More bijections}
We continue to discuss the tridiagonal system $\Phi = ( A; \lbrace E_i \rbrace_{i=0}^d; A^*; \lbrace E^*_i \rbrace_{i=0}^d )$ on $V$.
In Lemma \ref{lem:RRbij}, we described some bijections involving $\mathcal R, \mathcal L$.
In this section, we obtain some analogous bijections involving $R, L$.

\begin{theorem}
\label{thm:bij2}
For $0 \leq i\leq j\leq d$ the following {\rm (i), (ii)} hold.
\begin{enumerate}
\item[\rm (i)] The map ${R}^{j-i}: E^*_iV \to E^*_jV$ is injective if $i+j\leq d$, bijective if $i+j=d$, and surjective if $i+j\geq d$.
\item[\rm (ii)]  The map ${L}^{j-i}: E^*_jV \to E^*_iV$ is injective if $i+j\geq d$, bijective if $i+j=d$, and surjective if $i+j\leq d$.
\end{enumerate}
\end{theorem}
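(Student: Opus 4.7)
The plan is to prove (i) by transferring the analogous statement about $\mathcal R$ (Lemma \ref{lem:RRbij}(i)) through the commuting diagram of Theorem \ref{thm:m1}, and then to deduce (ii) from (i) by passing to the tridiagonal system obtained by reversing the ordering of the primitive idempotents of $A^*$.

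For (i), I would set $\pi = \sum_{\ell=0}^d F_\ell E^*_\ell$. By Lemma \ref{lem:Bij}, $\pi$ is a bijection on $V$ that restricts to a bijection $E^*_k V \to F_k V$ for each $k$. Theorem \ref{thm:m1} asserts $\pi R = \mathcal R \pi$ as maps $E^*_j V \to F_{j+1} V$ for $0 \le j \le d-1$. A short induction on $j-i$, using that $R$ sends $E^*_k V$ into $E^*_{k+1} V$, then yields
\begin{align*}
\pi \circ R^{j-i} = \mathcal R^{j-i} \circ \pi \qquad \text{as maps } E^*_i V \to F_j V
\end{align*}
for all $0 \le i \le j \le d$. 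Since $\pi$ restricts to bijections on both the source and the target, $R^{j-i}:E^*_i V \to E^*_j V$ and $\mathcal R^{j-i}:F_i V \to F_j V$ have the same kernel dimension and the same image dimension; in particular, one is injective (resp.\ surjective, resp.\ bijective) if and only if the other is. Part (i) is then immediate from Lemma \ref{lem:RRbij}(i).

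For (ii), I would consider the sequence $\Phi' = (A; \lbrace E_i\rbrace_{i=0}^d; A^*; \lbrace E^*_{d-i}\rbrace_{i=0}^d)$. Since the reversed ordering $\lbrace E^*_{d-i}\rbrace_{i=0}^d$ of the primitive idempotents of $A^*$ is again standard (as recalled in Section 3), $\Phi'$ is a tridiagonal system on $V$. Writing $\hat E^*_i = E^*_{d-i}$, the $\Phi'$-raising map of Definition \ref{def:RFL} is
\begin{align*}
\hat R = \sum_{i=0}^{d-1} \hat E^*_{i+1} A \hat E^*_i = \sum_{i=0}^{d-1} E^*_{d-i-1} A E^*_{d-i} = \sum_{k=1}^d E^*_{k-1} A E^*_k = L.
\end{align*}
Applying (i) to $\Phi'$ yields the stated properties of $\hat R^{j-i} = L^{j-i}: \hat E^*_i V \to \hat E^*_j V$, i.e., of $L^{j-i}: E^*_{d-i} V \to E^*_{d-j} V$. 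Reindexing $m = d-j$, $n = d-i$ (so $0 \le m \le n \le d$) converts the conditions $i+j \le d$, $i+j = d$, $i+j \ge d$ into $m+n \ge d$, $m+n = d$, $m+n \le d$ respectively, which is exactly (ii).

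The only real obstacle is bookkeeping: verifying that the individual diagrams of Theorem \ref{thm:m1} chain together into the single conjugacy $\pi R^{j-i} = \mathcal R^{j-i} \pi$ on $E^*_i V$, and that $\pi$ restricts compatibly to bijections $E^*_k V \to F_k V$ for each intermediate $k$ so that kernel and image dimensions transfer. Once this is in hand, both parts reduce cleanly to Lemma \ref{lem:RRbij}(i), without any further recourse to the tridiagonal relations.
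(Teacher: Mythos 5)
Your proposal is correct and follows essentially the same route as the paper: part (i) is obtained by conjugating Lemma \ref{lem:RRbij}(i) through the commuting diagram of Theorem \ref{thm:m1} via the bijection $\sum_{\ell=0}^d F_\ell E^*_\ell$, and part (ii) by applying (i) to the tridiagonal system $\Phi^\downarrow=(A;\lbrace E_\ell\rbrace_{\ell=0}^d;A^*;\lbrace E^*_{d-\ell}\rbrace_{\ell=0}^d)$, exactly as in the paper's proof. Your write-up simply makes explicit the chaining of the diagrams and the reindexing that the paper leaves to the reader.
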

\begin{proof} (i) By Lemma \ref{lem:RRbij}(i) and Theorem \ref{thm:m1}. \\
\noindent (ii) Apply (i) above to the tridiagonal system $\Phi^\downarrow = (A; \lbrace E_{\ell} \rbrace_{\ell=0}^d; A^*; \lbrace E^*_{d-\ell} \rbrace_{\ell=0}^d)$.
\end{proof}

\begin{corollary}
\label{cor:bij3}
For $0 \leq i\leq j\leq d$,
\begin{align}
{\rm rank} \,E^*_i A^{j-i} E^*_j = {\rm rank} \,E^*_j A^{j-i} E^*_i = \hbox{\rm min} \lbrace \rho_i, \rho_j \rbrace = \begin{cases} \rho_i & {\rm if}\; i+j\leq d; \\
                                                                                                                                                               \rho_j  &{\rm if}\;  i + j \geq d
                                                                                                                                                               \end{cases}
                                                                                                                                                               \label{eq:rank1}
                                                                                                                                                               \end{align}
 and also
 \begin{align}
{\rm rank} \,E_i (A^*)^{j-i} E_j = {\rm rank} \,E_j (A^*)^{j-i} E_i = \hbox{\rm min} \lbrace \rho_i, \rho_j \rbrace = \begin{cases} \rho_i & {\rm if}\; i+j\leq d; \\
                                                                                                                                                               \rho_j  &{\rm if}\;  i + j \geq d.
                                                                                                                                                               \end{cases}
                                                                                                                                                               \label{eq:rank2}
                                                                                                                                                               \end{align}
                                                                                                                                                               \end{corollary}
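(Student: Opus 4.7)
The plan is to reduce each of the four rank computations to the rank of one of the maps $R^{j-i}$ or $L^{j-i}$ whose injectivity/surjectivity profile is already controlled by Theorem \ref{thm:bij2}.

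First I would use the quantum decomposition $A=R+F+L$ from Lemma \ref{lem:ARFL} together with the level-shifting properties of $R,F,L$ in Lemma \ref{lem:RFLmeaning}. Expanding $A^{j-i}$ as a sum of words of length $j-i$ in the letters $R,F,L$, the net $E^*$-level change of each word equals (number of $R$'s) minus (number of $L$'s). To send $E^*_j V$ into $E^*_i V$ with $i \leq j$, the net change must be $-(j-i)$, which forces every letter to be $L$; all other words die after the projection $E^*_i$. This gives
\begin{align*}
E^*_i A^{j-i} E^*_j = L^{j-i} E^*_j, \qquad E^*_j A^{j-i} E^*_i = R^{j-i} E^*_i,
\end{align*}
so the ranks in \eqref{eq:rank1} are exactly the ranks of the restricted maps $L^{j-i}\colon E^*_jV \to E^*_iV$ and $R^{j-i}\colon E^*_iV \to E^*_jV$.

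Next I invoke Theorem \ref{thm:bij2}. If $i+j \leq d$, then $R^{j-i}$ is injective and $L^{j-i}$ is surjective, so both ranks equal $\dim E^*_iV=\rho_i$; if $i+j \geq d$, then $R^{j-i}$ is surjective and $L^{j-i}$ is injective, so both ranks equal $\dim E^*_jV=\rho_j$. To identify these values with $\min\{\rho_i,\rho_j\}$, I use the unimodality statement \eqref{eq:unimodal} together with $\rho_i = \rho_{d-i}$: when $i \leq j$ and $i+j \leq d$, a short case split (according as $j \leq d/2$ or $j > d/2$, in the latter case rewriting $\rho_j = \rho_{d-j}$ with $i \leq d-j \leq d/2$) yields $\rho_i \leq \rho_j$; the symmetric case $i+j \geq d$ gives $\rho_j \leq \rho_i$. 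This establishes \eqref{eq:rank1}.

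Finally, for \eqref{eq:rank2} I apply \eqref{eq:rank1} to the dual tridiagonal system $\Phi^* = (A^*; \lbrace E^*_\ell \rbrace_{\ell=0}^d; A; \lbrace E_\ell \rbrace_{\ell=0}^d)$, which has the same shape $\lbrace \rho_i\rbrace_{i=0}^d$ since the dimensions of the $E_iV$ and $E^*_iV$ agree. The main obstacle is the bookkeeping in the first step — verifying that every word in $R,F,L$ of length $j-i$ other than $L^{j-i}$ (respectively $R^{j-i}$) is killed upon sandwiching between $E^*_i$ and $E^*_j$ — but this is immediate from Lemma \ref{lem:RFLmeaning}, so the argument is essentially a clean two-line reduction followed by an application of Theorem \ref{thm:bij2}.
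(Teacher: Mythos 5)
Your proposal is correct and follows essentially the same route as the paper: the paper's (very terse) proof likewise derives \eqref{eq:rank1} from Theorem \ref{thm:bij2} together with the unimodality \eqref{eq:unimodal}, and obtains \eqref{eq:rank2} by applying \eqref{eq:rank1} to $\Phi^*$. Your word-expansion of $A^{j-i}$ in $R,F,L$ simply makes explicit the reduction $E^*_iA^{j-i}E^*_j = L^{j-i}E^*_j$ that the paper leaves implicit.
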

\begin{proof}
Assertion \eqref{eq:rank1} follows from \eqref{eq:unimodal} and Theorem \ref{thm:bij2}. To prove  \eqref{eq:rank2}, apply
\eqref{eq:rank1} to the tridiagonal system $\Phi^* = (A^*; \lbrace E^*_\ell \rbrace_{\ell=0}^d; A; \lbrace E_\ell \rbrace_{\ell=0}^d)$.
\end{proof}

\section{Assuming $\Phi$ is a Leonard system}
We continue to discuss the tridiagonal system $\Phi = ( A; \lbrace E_i \rbrace_{i=0}^d; A^*; \lbrace E^*_i \rbrace_{i=0}^d )$ on $V$.
Recall the shape $\lbrace \rho_i \rbrace_{i=0}^d$ of $\Phi$.
We call $\Phi$ a {\it Leonard system} whenever $\rho_i = 1$ for $0 \leq i \leq d$. In this case ${\rm dim}\, V=d+1$. See 
\cite{smLP, 2lintrans, ter24, terCanForm,ter2004, ter2005, ter2005b, madrid, LSnotes, vidunas} for some basic facts about Leonard systems.
Throughout this section, we assume that $\Phi$ is a Leonard system. Our goal is to describe what our earlier results become
 under this assumption.
 
 \begin{lemma} 
 \label{lem:bij2}
 For $0 \leq i\leq j \leq d$ the following {\rm (i)--(iv)} hold:
 \begin{enumerate}
 \item[\rm (i)]  the map $\mathcal R^{j-i}: F_iV \to F_j V$ is a bijection;
 \item[\rm (ii)] the map $\mathcal L^{j-i} : F_j V \to F_iV$ is a bijection;
 \item[\rm (iii)]  the map $ R^{j-i}: E^*_iV \to E^*_j V$ is a bijection;
 \item[\rm (iv)]  the map $ L^{j-i}: E^*_jV \to E^*_i V$ is a bijection.
 \end{enumerate}
 \end{lemma}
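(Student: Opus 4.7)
The plan is to deduce the four bijection statements directly from the trichotomies already established in Lemma~\ref{lem:RRbij} and Theorem~\ref{thm:bij2}, using the fact that the Leonard hypothesis collapses all the relevant subspaces to dimension one.

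The first step is to record the dimension information. By the Leonard system assumption $\rho_i=1$ for $0\leq i \leq d$. Recall that $\dim F_iV = \rho_i$ (from the paragraph preceding Definition~\ref{def:Fi}) and $\dim E^*_iV = \rho_i$ (from the definition of shape in Section~3). Hence $\dim F_iV = \dim F_jV = \dim E^*_iV = \dim E^*_jV = 1$ for $0 \leq i \leq j \leq d$. Between finite-dimensional spaces of equal dimension, a linear map is injective if and only if it is surjective if and only if it is bijective.

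The second step is to invoke the two trichotomies. For (i), Lemma~\ref{lem:RRbij}(i) says $\mathcal R^{j-i} \colon F_iV \to F_jV$ is injective when $i+j \leq d$, bijective when $i+j=d$, and surjective when $i+j \geq d$; combining either alternative with the previous paragraph yields bijectivity in every case. The same argument applied to Lemma~\ref{lem:RRbij}(ii) proves (ii). For (iii) and (iv), the identical reasoning applies to Theorem~\ref{thm:bij2}(i),(ii), since the source and target $E^*_iV$, $E^*_jV$ again both have dimension one.

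There is essentially no obstacle here; the entire content of the lemma is a dimension-counting observation packaged with a citation of Lemma~\ref{lem:RRbij} and Theorem~\ref{thm:bij2}. The only care needed is to notice that the three alternatives (injective, bijective, surjective) in each trichotomy all coincide with bijectivity once every space involved is one-dimensional, so no separate verification is required for the ranges $i+j<d$ and $i+j>d$.
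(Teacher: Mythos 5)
Your proof is correct and follows exactly the paper's own argument: cite the injective/surjective/bijective trichotomies of Lemma \ref{lem:RRbij} and Theorem \ref{thm:bij2}, then observe that since every $F_\ell V$ and $E^*_\ell V$ is one-dimensional under the Leonard hypothesis, each alternative forces bijectivity. No issues.
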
 
 \begin{proof} (i), (ii) By Lemma \ref{lem:RRbij} and since $F_\ell V$ has dimension 1 for $0 \leq \ell \leq d$. \\
 (iii), (iv) By Theorem \ref{thm:bij2} and since $E^*_\ell V$ has dimension 1 for $0 \leq \ell \leq d$.
 \end{proof}
\noindent We bring in some notation. Let ${\rm Mat}_{d+1}(\mathbb F)$ denote the algebra 
consisting of the $d+1$ by $d+1$ matrices that have all entries in $\mathbb F$. We index the rows and columns by $0,1,\ldots, d$.
Let $\lbrace v_i \rbrace_{i=0}^d$ denote a basis for $V$. For $X \in {\rm Mat}_{d+1}(\mathbb F)$ and $Y \in {\rm End}(V)$, we say that
{\it $X$ represents $Y$ with respect to $\lbrace v_i \rbrace_{i=0}^d$} whenever $Y v_j = \sum_{i=0}^d X_{i,j} v_i$ for
$0 \leq j \leq d$.
\medskip

\noindent We recall from \cite[Section~7]{ter2004}  the scalars $\lbrace a_i \rbrace_{i=0}^d$, $\lbrace x_i \rbrace_{i=1}^d$. Pick $0 \not=\zeta \in E^*_0V$.
By Lemma \ref{lem:bij2}(iii), for $0 \leq i \leq d$ the vector $R^i \zeta$ is a basis for $E^*_iV$. Thus the vectors $\lbrace R^i \zeta\rbrace_{i=0}^d$
form a basis for $V$. With respect to this basis we have the following matrix representations in ${\rm Mat}_{d+1}(\mathbb F)$.
For $0 \leq i \leq d$ we have
\begin{align*}
E^*_i:\quad \hbox{\rm diag} ( 0, \ldots, 0,1,0,\ldots, 0)
\end{align*}
where the $1$ is in the $(i,i)$-entry. Using Lemma \ref{lem:EAEp}(i) and  \eqref{eq:AAs} 
 we obtain
\begin{align*}
A: \quad \begin{pmatrix}
a_0 & x_1 & &&& {\bf 0} \\
1 &a_1 &x_2 &&& \\
&1&\cdot &\cdot &&\\
&&\cdot &\cdot &\cdot &\\
&&&\cdot &\cdot & x_d\\
{\bf 0}&&&&1 &a_d 
\end{pmatrix},
\qquad \qquad
A^*:\quad \hbox{\rm diag} ( \theta^*_0, \theta^*_1, \ldots, \theta^*_d)
\end{align*}
where $x_i \not=0$  $(1 \leq i \leq d)$. By Definition \ref{def:RFL}  we have
\begin{align*}
&R: \quad \begin{pmatrix}
0 &  & &&& {\bf 0} \\
1 &0 & &&& \\
&1&\cdot &&&\\
&&\cdot &\cdot & &\\
&&&\cdot &\cdot & \\
{\bf 0}&&&&1 &0 
\end{pmatrix},
\qquad \qquad
L: \quad \begin{pmatrix}
0 & x_1 & &&& {\bf 0} \\
 &0 &x_2 &&& \\
&&\cdot &\cdot &&\\
&& &\cdot &\cdot &\\
&&&&\cdot & x_d\\
{\bf 0}&&&&&0
\end{pmatrix},
\end{align*}
\begin{align*}
F: \quad {\rm diag} (a_0, a_1, \ldots, a_d).
\end{align*}

\noindent
Using the above matrix representations (or \cite[Lemma~7.5]{ter2004}) we obtain
\begin{align}
E^*_i A E^*_i &= a_i E^*_i \qquad \qquad (0 \leq i \leq d),     
  \label{eq:EAEai}     \\
  E^*_i A E^*_{i-1} A E^*_i &= x_i  E^*_i \qquad \qquad (1 \leq i \leq d),
  \label{eq:EAEAEcibi}\\
   E^*_i A E^*_{i+1} A E^*_i &= x_{i+1} E^*_i \qquad \qquad (0 \leq i \leq d-1).
  \label{eq:EAEAEbici}
  \end{align}
  
\noindent  Next, we describe Theorem \ref{thm:ggee} under the assumption that $\Phi$ is a Leonard system.

\begin{theorem} 
\label{thm:TDaixi}
The following {\rm (i), (ii)} hold:
\begin{enumerate}
\item[\rm (i)]  For $2 \leq i \leq d$ we have
\begin{align*}
g^-_i a_{i-2} + a_{i-1} + g^+_i a_i = \gamma,
\end{align*}
where $g^+_i$, $g^-_i$ are from Theorem \ref{thm:ggee}(i).
\item[\rm (ii)] For $1 \leq i \leq d$ we have
\begin{align*}
e^-_i x_{i-1} + (\beta+2) x_i + e^+_i x_{i+1}
+ a^2_i - \beta a_{i-1} a_i + a_{i-1}^2 = \gamma (a_i + a_{i-1}) + \varrho,
\end{align*}
where $e^+_i, e^-_i$ are from Theorem \ref{thm:ggee}(ii) and $x_0=0=x_{d+1}$.
\end{enumerate}
\end{theorem}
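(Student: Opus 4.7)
The plan is to apply the operator identities of Theorem \ref{thm:ggee} to a distinguished basis of $V$ and read off scalar relations. Fix a nonzero vector $\zeta\in E^*_0V$. By Lemma \ref{lem:bij2}(iii), the vectors $\zeta_i := R^i\zeta$ $(0\leq i\leq d)$ form a basis of $V$ with $\zeta_i$ spanning the one-dimensional space $E^*_iV$. The matrix representations displayed just before Theorem \ref{thm:TDaixi} give the rules
\begin{align*}
R\zeta_i=\zeta_{i+1},\qquad L\zeta_i=x_i\zeta_{i-1},\qquad F\zeta_i=a_i\zeta_i,
\end{align*}
valid uniformly with the conventions $\zeta_{-1}=0=\zeta_{d+1}$ and $x_0=0=x_{d+1}$.

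For part (i), I would apply the identity $g^-_iFL^2+LFL+g^+_iL^2F=\gamma L^2$ from Theorem \ref{thm:ggee}(i) to the basis vector $\zeta_i$ for $2\leq i\leq d$. A direct calculation using the three rules above yields $FL^2\zeta_i=x_ix_{i-1}a_{i-2}\zeta_{i-2}$, $LFL\zeta_i=x_ix_{i-1}a_{i-1}\zeta_{i-2}$, $L^2F\zeta_i=x_ix_{i-1}a_i\zeta_{i-2}$, and $L^2\zeta_i=x_ix_{i-1}\zeta_{i-2}$. Dividing out the common nonzero scalar $x_ix_{i-1}$ gives the stated identity.

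For part (ii), I would apply the equation in Theorem \ref{thm:ggee}(ii) to $\zeta_i$ for $1\leq i\leq d$. Each of the monomials in $R,F,L$ on either side sends $\zeta_i$ to a scalar multiple of $\zeta_{i-1}$, and in every case that scalar carries a factor of $x_i$. Specifically, $RL^2\zeta_i=x_ix_{i-1}\zeta_{i-1}$, $LRL\zeta_i=x_i^2\zeta_{i-1}$, $L^2R\zeta_i=x_ix_{i+1}\zeta_{i-1}$, $LF^2\zeta_i=x_ia_i^2\zeta_{i-1}$, $FLF\zeta_i=x_ia_{i-1}a_i\zeta_{i-1}$, $F^2L\zeta_i=x_ia_{i-1}^2\zeta_{i-1}$, $LF\zeta_i=x_ia_i\zeta_{i-1}$, $FL\zeta_i=x_ia_{i-1}\zeta_{i-1}$, and $L\zeta_i=x_i\zeta_{i-1}$. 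Dividing by $x_i$ then yields the claimed relation.

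The only subtle point is the behavior at the endpoints $i=1$ and $i=d$, where $e^-_1$ and $e^+_d$ are indeterminate. At $i=1$ one has $L^2\zeta_1=x_1L\zeta_0=0$, so the term $e^-_1RL^2\zeta_1$ vanishes regardless of the value of $e^-_1$, consistent with $x_0=0$ in the claimed formula; an analogous observation handles $e^+_d$ at $i=d$ via $R\zeta_d=0$ and $x_{d+1}=0$. Apart from this bookkeeping, the entire proof is a routine evaluation on the basis $\{\zeta_i\}_{i=0}^d$.
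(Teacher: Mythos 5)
Your proposal is correct and is exactly the argument the paper intends: the paper's proof consists of the single instruction ``Evaluate Theorem \ref{thm:ggee} using the matrix representations below Lemma \ref{lem:bij2},'' and those representations are precisely the rules $R\zeta_i=\zeta_{i+1}$, $L\zeta_i=x_i\zeta_{i-1}$, $F\zeta_i=a_i\zeta_i$ on the basis $\{R^i\zeta\}_{i=0}^d$ that you use. Your explicit evaluation of each monomial, the division by the nonzero scalars $x_ix_{i-1}$ (resp.\ $x_i$), and the endpoint bookkeeping for the indeterminates $e^-_1$, $e^+_d$ are all accurate, so there is nothing to add.
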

\begin{proof} Evaluate Theorem \ref{thm:ggee} using the matrix representations below Lemma \ref{lem:bij2}.
  \end{proof}

\noindent We recall from \cite[Definition~3.10]{2lintrans} the first split sequence of $\Phi$. Let $\lambda$ denote an indeterminate, and let $\mathbb F\lbrack \lambda \rbrack$
denote the algebra of polynomials in $\lambda$ that have all coefficients in $\mathbb F$.
For $0 \leq i \leq d$ define $\tau_i, \tau^*_i \in {\mathbb F}\lbrack \lambda \rbrack$ by
\begin{align*}
\tau_i  &= (\lambda - \theta_0) (\lambda-\theta_1) \cdots (\lambda-\theta_{i-1}), \\
\tau^*_i &= (\lambda - \theta^*_0) (\lambda-\theta^*_1) \cdots (\lambda-\theta^*_{i-1}).
\end{align*}
The polynomials $\tau_i, \tau^*_i$ have degree $i$ for $0 \leq i \leq d$. Recall the vector $0 \not= \zeta \in E^*_0V$. By \eqref{eq:usum1} and the construction,
$E^*_0V=U_0=F_0V$. For the moment, let $0 \leq i \leq d$. By Lemmas \ref{lem:RRact}, \ref{lem:RRmeaning} 
we have $\tau_i(A) \zeta = \mathcal R^i \zeta$, and by Lemma \ref{lem:bij2}(i)
the vector $\mathcal R^i \zeta $ is a basis for $F_iV$. Thus the vectors $\lbrace \mathcal R^i \zeta \rbrace_{i=0}^d$ form a basis for $V$.
With respect to this basis, we have the following matrix representations in  ${\rm Mat}_{d+1} (\mathbb F)$.
For $0 \leq i \leq d$ we have
\begin{align*}
F_i : \quad {\rm diag}(0,\ldots, 0,1,0,\ldots, 0)
\end{align*}
where the $1$ is in the $(i,i)$-entry.
By the construction (or \cite[Theorem ~3.2]{2lintrans}) we have
\begin{align*}
A: \quad \begin{pmatrix}
\theta_0 &  & &&& {\bf 0} \\
1 &\theta_1 & &&& \\
&1&\theta_2 &&&\\
&&\cdot &\cdot &&\\
&&&\cdot &\cdot & \\
{\bf 0}&&&&1&\theta_d 
\end{pmatrix},
\qquad \qquad
A^*:\quad 
\begin{pmatrix}
\theta^*_0 & \varphi_1 & &&& {\bf 0} \\
 &\theta^*_1 &\varphi_2 &&& \\
&&\theta^*_2  &\cdot &&\\
&&&\cdot &\cdot &\\
&&&&\cdot & \varphi_d\\
{\bf 0}&&&& &\theta^*_d 
\end{pmatrix},
\end{align*}
where $\varphi_i \not=0$ $(1 \leq i \leq d)$.
The sequence $\lbrace \varphi_i \rbrace_{i=1}^d$ is called the {\it first split sequence} of $\Phi$.
By Definition \ref{def:RRLL} we have
\begin{align*}
\mathcal R: \quad \begin{pmatrix}
0 &  & &&& {\bf 0} \\
1 &0 & &&& \\
&1&0 &&&\\
&&\cdot &\cdot &&\\
&&&\cdot &\cdot & \\
{\bf 0}&&&&1&0 
\end{pmatrix},
\qquad \qquad
\mathcal L:\quad 
\begin{pmatrix}
0 & \varphi_1 & &&& {\bf 0} \\
 &0 &\varphi_2 &&& \\
&&0  &\cdot &&\\
&&&\cdot &\cdot &\\
&&&&\cdot & \varphi_d\\
{\bf 0}&&&& &0 
\end{pmatrix}.
\end{align*}
Using the above matrix representations, we obtain
\begin{align}
&F_i \mathcal R \mathcal L = \mathcal R F_{i-1} \mathcal L = \mathcal R \mathcal L F_i = \varphi_i F_i \qquad \qquad (1 \leq i \leq d),  \label{eq:FRL} \\
& F_i \mathcal L \mathcal R = \mathcal L F_{i+1} \mathcal R = \mathcal L \mathcal R F_i = \varphi_{i+1} F_i \qquad \qquad (0 \leq i \leq d-1). \label{eq:FLR}
\end{align}

\noindent The next result is well known; our goal is to derive it from Corollary \ref{cor:c2}.
\begin{proposition}
\label{prop:3termai}
{\rm (See \cite[Lemma~5.1]{2lintrans}.)}
For $1 \leq i \leq d-1$, 
\begin{align}
a_i &= \theta_i + \frac{\varphi_i}{\theta^*_i - \theta^*_{i-1}} + \frac{\varphi_{i+1}}{\theta^*_i - \theta^*_{i+1}}.
\label{eq:aiSUM}
\end{align}
Moreover,
\begin{align}
a_0 = \theta_0+ \frac{\varphi_{1}}{\theta^*_0 - \theta^*_{1}}, \qquad \qquad 
a_d = \theta_d + \frac{\varphi_d}{\theta^*_d - \theta^*_{d-1}}.
\label{eq:a0d}
\end{align}
\end{proposition}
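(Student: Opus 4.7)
The plan is to read off Proposition \ref{prop:3termai} by comparing two expressions for the same operator $F_j E^*_j A E^*_j$: one coming from \eqref{eq:EAEai}, the other from Corollary \ref{cor:c2}. The Leonard hypothesis will allow us to cancel the nonzero rank-$1$ operator $F_j E^*_j$ and conclude that the scalar coefficients agree.

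First, for $1 \leq j \leq d-1$, I would left-multiply the identity $E^*_j A E^*_j = a_j E^*_j$ from \eqref{eq:EAEai} by $F_j$ to obtain $F_j E^*_j A E^*_j = a_j F_j E^*_j$. On the other hand, Corollary \ref{cor:c2} gives
\begin{align*}
F_j E^*_j A E^*_j = \Bigl(\theta_j I + \frac{\mathcal R \mathcal L}{\theta^*_j-\theta^*_{j-1}} + \frac{\mathcal L \mathcal R}{\theta^*_j-\theta^*_{j+1}}\Bigr) F_j E^*_j.
\end{align*}
Next I would simplify the right-hand side using $\mathcal R \mathcal L F_j = \varphi_j F_j$ and $\mathcal L \mathcal R F_j = \varphi_{j+1} F_j$ from \eqref{eq:FRL}, \eqref{eq:FLR}, converting the bracketed operator into the scalar $\theta_j + \varphi_j/(\theta^*_j-\theta^*_{j-1}) + \varphi_{j+1}/(\theta^*_j-\theta^*_{j+1})$ times $F_j E^*_j$.

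Finally I would invoke Corollary \ref{cor:rFE}: since $\Phi$ is a Leonard system we have $\rho_j = 1$, so $F_j E^*_j$ has rank $1$ and is in particular nonzero. Equating the two expressions for $F_j E^*_j A E^*_j$ therefore forces the scalar coefficients to coincide, which is precisely \eqref{eq:aiSUM}. The two boundary formulas in \eqref{eq:a0d} follow by the same argument applied to the $j=0$ and $j=d$ displays in Corollary \ref{cor:c2}; the missing terms there correspond exactly to $\mathcal L F_0 = 0$ and $\mathcal R F_d = 0$ (see Lemma \ref{lem:RRmeaning}), so no $\varphi_0$ or $\varphi_{d+1}$ appears.

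There is no real obstacle here: all the substantive work was done in Corollary \ref{cor:c2} and in the eigenvalue-type relations \eqref{eq:FRL}, \eqref{eq:FLR}. The only delicate point is verifying that $F_j E^*_j \neq 0$ so that scalar cancellation is legitimate, and this is immediate from the Leonard assumption via Corollary \ref{cor:rFE}.
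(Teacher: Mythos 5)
Your proposal is correct and follows essentially the same route as the paper: both compare $F_iE^*_iAE^*_i = a_i F_iE^*_i$ (from \eqref{eq:EAEai}) with the expression from Corollary \ref{cor:c2}, reduce the operator coefficient to a scalar via \eqref{eq:FRL}, \eqref{eq:FLR}, and cancel using $F_iE^*_i\neq 0$. Your explicit appeal to Corollary \ref{cor:rFE} for the nonvanishing of $F_iE^*_i$ is a reasonable way to justify the cancellation step that the paper states without citation.
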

\begin{proof} We first verify \eqref{eq:aiSUM}.
Using Corollary \ref{cor:c2} and  \eqref{eq:EAEai}, \eqref{eq:FRL}, \eqref{eq:FLR} we obtain
\begin{align*}
a_i F_i E^*_i &= F_i E^*_i A E^*_i \\
&= \biggl(  \theta_i I + \frac{ \mathcal R \mathcal L}{ \theta^*_i - \theta^*_{i-1}} + \frac{\mathcal L \mathcal R}{\theta^*_i - \theta^*_{i+1}}           \biggr) F_i E^*_i \\
&= \biggl(  \theta_i  + \frac{\varphi_i }{ \theta^*_i - \theta^*_{i-1}} + \frac{\varphi_{i+1} }{\theta^*_i - \theta^*_{i+1}}           \biggr) F_i E^*_i .
\end{align*}
By these comments and $F_i E^*_i \not=0$, we obtain \eqref{eq:aiSUM}. The assertions \eqref{eq:a0d} are similarly obtained.
\end{proof}

\noindent
Next, we describe Corollary \ref{cor:c3} under the assumption that $\Phi$ is a Leonard system.
\begin{proposition}
\label{prop:m4}
For $2 \leq i \leq d-1$,
\begin{align} \label{eq:first}
&\frac{(\theta^*_{i-1} - \theta^*_i)^2 x_i}{\varphi_i}
 =\varphi_{i-1} \frac{\theta^*_i - \theta^*_{i-1}}{\theta^*_i - \theta^*_{i-2}} - \varphi_i + \varphi_{i+1} \frac{\theta^*_{i-1} - \theta^*_i}{\theta^*_{i-1} - \theta^*_{i+1}}
- (\theta_{i-1}-\theta_i)(\theta^*_{i-1}-\theta^*_i).
\end{align}
For $d\geq 2$,
\begin{align} \label{eq:second}
&\frac{(\theta^*_{0} - \theta^*_1)^2 x_1}{\varphi_1}
 =      - \varphi_1+ \varphi_{2} \frac{\theta^*_{0} - \theta^*_1}{\theta^*_{0} - \theta^*_{2}}
- (\theta_{0}-\theta_1)(\theta^*_{0}-\theta^*_1), \\
&\frac{(\theta^*_{d-1} - \theta^*_d)^2 x_d}{\varphi_d}
 =\varphi_{d-1} \frac{\theta^*_d - \theta^*_{d-1}}{\theta^*_d - \theta^*_{d-2}} - \varphi_d 
- (\theta_{d-1}-\theta_d)(\theta^*_{d-1}-\theta^*_d). \label{eq:third}
\end{align}
For $d=1$,
\begin{align} \label{eq:forth}
\frac{(\theta^*_{0} - \theta^*_1)^2 x_1}{\varphi_1}
 =      - \varphi_1
- (\theta_{0}-\theta_1)(\theta^*_{0}-\theta^*_1).
\end{align}

\end{proposition}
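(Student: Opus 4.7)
My plan is to follow the strategy used in the proof of Proposition \ref{prop:3termai}: apply Corollary \ref{cor:c3}, collapse the right-hand side into a single scalar multiple of $\mathcal L F_j E^*_j$ using \eqref{eq:FRL} and \eqref{eq:FLR}, and then pin down the scalar in terms of $x_j$ by evaluating the left-hand side a second way via \eqref{eq:EAEAEbici}.

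First I would note that on $F_j V$, each of the operators $\mathcal R\mathcal L^2$, $\mathcal L\mathcal R\mathcal L$, $\mathcal L^2\mathcal R$ appearing in Corollary \ref{cor:c3} reduces to a scalar multiple of $\mathcal L F_j$. Using $\mathcal L F_j V \subseteq F_{j-1}V$ (Lemma \ref{lem:RRmeaning}) together with \eqref{eq:FRL}, \eqref{eq:FLR}, one verifies
\begin{align*}
\mathcal R \mathcal L^2 F_j = \varphi_{j-1}\, \mathcal L F_j, \qquad \mathcal L \mathcal R \mathcal L F_j = \varphi_j\, \mathcal L F_j, \qquad \mathcal L^2 \mathcal R F_j = \varphi_{j+1}\, \mathcal L F_j,
\end{align*}
with the understanding that any term involving $\varphi_0$ or $\varphi_{d+1}$ is simply omitted in the boundary cases (matching the three variants of Corollary \ref{cor:c3}). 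Thus Corollary \ref{cor:c3} becomes $F_{j-1} E^*_{j-1} A E^*_j = C_j\, \mathcal L F_j E^*_j$, where
\begin{align*}
C_j = \frac{\theta_j - \theta_{j-1}}{\theta^*_{j-1}-\theta^*_j} + \frac{\varphi_{j-1}}{(\theta^*_j - \theta^*_{j-1})(\theta^*_j-\theta^*_{j-2})} - \frac{\varphi_j}{(\theta^*_{j-1}-\theta^*_j)^2} + \frac{\varphi_{j+1}}{(\theta^*_{j-1}-\theta^*_j)(\theta^*_{j-1}-\theta^*_{j+1})},
\end{align*}
subject to the same boundary conventions.

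Next I would multiply this identity on the right by $E^*_j A E^*_{j-1}$. On the left, \eqref{eq:EAEAEbici} yields $E^*_{j-1} A E^*_j A E^*_{j-1} = x_j E^*_{j-1}$, producing $x_j F_{j-1} E^*_{j-1}$. On the right, Corollary \ref{cor:c1} rewrites $F_j E^*_j A E^*_{j-1}$ as $\mathcal R F_{j-1} E^*_{j-1}$, and then \eqref{eq:FLR} gives $\mathcal L \mathcal R F_{j-1} = \varphi_j F_{j-1}$, so the right-hand side collapses to $C_j \varphi_j F_{j-1} E^*_{j-1}$. Since $F_{j-1} E^*_{j-1} \neq 0$ by Lemma \ref{lem:FEF} (indeed $F_{j-1} E^*_{j-1} F_{j-1} = F_{j-1} \neq 0$), comparing coefficients yields $C_j = x_j/\varphi_j$. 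Multiplying through by $(\theta^*_{j-1}-\theta^*_j)^2$ and rearranging gives \eqref{eq:first}; the boundary assertions \eqref{eq:second}, \eqref{eq:third}, \eqref{eq:forth} follow from the identical argument in the truncated ranges of Corollary \ref{cor:c3}.

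I do not anticipate a serious obstacle. The collapse of the $\mathcal R,\mathcal L$-words to scalars times $\mathcal L F_j$ is mechanical in the split basis (each word is just a product of shifts weighted by $\varphi_i$), and right-multiplying by $E^*_j A E^*_{j-1}$ to surface $x_j$ is the direct analogue of how $a_i$ was extracted in the proof of Proposition \ref{prop:3termai}. The only care required is the bookkeeping for the regimes $j=1$, $j=d$, and $d=1$, where certain terms of $C_j$ are absent, but in each regime the same four tools---\eqref{eq:EAEAEbici}, Corollary \ref{cor:c1}, \eqref{eq:FLR}, and Lemma \ref{lem:FEF}---close the argument uniformly.
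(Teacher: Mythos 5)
Your proposal is correct and is essentially the paper's own argument in mirror image: the paper computes $F_iE^*_iAE^*_{i-1}AE^*_i=x_iF_iE^*_i$ by applying Corollary \ref{cor:c1} on the left and then Corollary \ref{cor:c3}, while you apply Corollary \ref{cor:c3} first and then multiply on the right by $E^*_jAE^*_{j-1}$, using the same collapse identities \eqref{eq:FRL}, \eqref{eq:FLR} and the nonvanishing of $F_\ell E^*_\ell$ and $\varphi_\ell$. The difference is purely one of which side the extra factor of $A$ is attached to, so no substantive comparison is needed.
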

\begin{proof} We first verify \eqref{eq:first}.
Using in order   \eqref{eq:EAEAEcibi}, Corollary \ref{cor:c1}, Corollary \ref{cor:c3}, Lemma \ref{lem:LLmeaning2}, \eqref{eq:FRL}, \eqref{eq:FLR}
we obtain
\begin{align*}
&(\theta^*_{i-1}-\theta^*_{i})^2 x_i F_i E^*_i \\
&\quad = (\theta^*_{i-1}-\theta^*_{i})^2 F_i E^*_i A E^*_{i-1} A E^*_i \\
&\quad = (\theta^*_{i-1}-\theta^*_{i})^2 \mathcal R F_{i-1}E^*_{i-1} A E^*_i \\
&\quad= \mathcal R \biggl(
(\theta_i - \theta_{i-1})(\theta^*_{i-1}- \theta^*_{i}) \mathcal L 
+
\mathcal R \mathcal L^2  \frac{\theta^*_i - \theta^*_{i-1}}{\theta^*_i - \theta^*_{i-2}} -\mathcal L \mathcal R \mathcal L+ \mathcal L^2 \mathcal R \frac{\theta^*_{i-1} - \theta^*_i}{\theta^*_{i-1} - \theta^*_{i+1}}
\biggr) F_i E^*_i \\
&\quad= \mathcal R \biggl(
\mathcal R \mathcal L^2  \frac{\theta^*_i - \theta^*_{i-1}}{\theta^*_i - \theta^*_{i-2}} -\mathcal L \mathcal R \mathcal L+ \mathcal L^2 \mathcal R \frac{\theta^*_{i-1} - \theta^*_i}{\theta^*_{i-1} - \theta^*_{i+1}}
-
(\theta_{i-1} - \theta_{i})(\theta^*_{i-1}- \theta^*_{i}) \mathcal L 
\biggr) F_i E^*_i \\
&\quad= \varphi_i  \biggl(
\varphi_{i-1}  \frac{\theta^*_i - \theta^*_{i-1}}{\theta^*_i - \theta^*_{i-2}} -\varphi_i+  \varphi_{i+1}  \frac{\theta^*_{i-1} - \theta^*_i}{\theta^*_{i-1} - \theta^*_{i+1}}
-(\theta_{i-1} - \theta_{i})(\theta^*_{i-1}- \theta^*_{i}) 
\biggr) F_i E^*_i.
\end{align*}
Assertion  \eqref{eq:first} follows from these comments along with $F_i E^*_i \not=0$ and $\varphi_i \not=0$. The assertions \eqref{eq:second}--\eqref{eq:forth}
are similarly obtained.
\end{proof}

\noindent We recall from \cite[Section~11]{ter2004}  the scalars $\lbrace b_i \rbrace_{i=0}^{d-1}$, $\lbrace c_i \rbrace_{i=1}^d$. Pick $0 \not=\xi \in E_0V$. By \cite[Lemma~10.2]{ter2004}, for $0 \leq i \leq d$ the vector
$E^*_i \xi $ is nonzero and hence a basis for $E^*_iV$. Thus the vectors $\lbrace E^*_i \xi\rbrace_{i=0}^d$ form a basis for $V$.
With respect to this basis, we have the following matrix representations in ${\rm Mat}_{d+1}(\mathbb F)$.
 For $0 \leq i \leq d$ we have
\begin{align*}
E^*_i:\quad \hbox{\rm diag} ( 0, \ldots, 0,1,0,\ldots, 0)
\end{align*}
where the $1$ is in the $(i,i)$-entry.
By construction (or \cite[Lemmas~10.7, 11.3]{ter2004}) we have
\begin{align*}
A: \quad \begin{pmatrix}
a_0 & b_0 & &&& {\bf 0} \\
c_1 &a_1 &b_1 &&& \\
&c_2&\cdot &\cdot &&\\
&&\cdot &\cdot &\cdot &\\
&&&\cdot &\cdot & b_{d-1}\\
{\bf 0}&&&&c_d &a_d 
\end{pmatrix},
\qquad \qquad
A^*:\quad \hbox{\rm diag} ( \theta^*_0, \theta^*_1, \ldots, \theta^*_d)
\end{align*}
where $c_i \not=0$ $(1 \leq i \leq d)$ and $b_i \not=0$ $(0 \leq i \leq d-1)$. 
By Definition \ref{def:RFL}  we have
\begin{align*}
&R: \quad \begin{pmatrix}
0 &  & &&& {\bf 0} \\
c_1 &0 & &&& \\
&c_2&\cdot &&&\\
&&\cdot &\cdot & &\\
&&&\cdot &\cdot & \\
{\bf 0}&&&&c_d &0 
\end{pmatrix},
\qquad \qquad
L: \quad \begin{pmatrix}
0 & b_0 & &&& {\bf 0} \\
 &0 &b_1 &&& \\
&&\cdot &\cdot &&\\
&& &\cdot &\cdot &\\
&&&&\cdot & b_{d-1}\\
{\bf 0}&&&&&0
\end{pmatrix},
\end{align*}
\begin{align*}
F: \quad {\rm diag} (a_0, a_1, \ldots, a_d).
\end{align*}
\noindent
By the construction (or \cite[Lemma~11.2(ii)]{ter2004}) we have
\begin{align*}
c_i + a_i + b_i = \theta_0 \qquad \qquad (0 \leq i \leq d),
\end{align*}
where $c_0=0$ and $b_d=0$. 
\noindent By   \eqref{eq:EAEAEcibi} (or
\cite[Lemma~11.2(i)]{ter2004}) we have
\begin{align*}
x_i &= c_i b_{i-1} \qquad \qquad (1 \leq i \leq d).
\end{align*}
By this and
 \cite[Theorem~17.7(i)]{ter2004} we obtain
\begin{align*}
b_i &= \varphi_{i+1} \frac{\tau^*_i(\theta^*_i)}{\tau^*_{i+1}(\theta^*_{i+1})} \qquad \qquad (0 \leq i \leq d-1), \\
c_i &= \frac{x_i}{\varphi_i} \, \frac{\tau^*_i(\theta^*_i)}{\tau^*_{i-1}(\theta^*_{i-1})} \qquad \qquad (1 \leq i \leq d).
\end{align*}

\noindent The next result is a variation on Proposition \ref{prop:m4}.
\begin{theorem}
\label{thm:m4}
For $2 \leq i \leq d-1$,
\begin{align*}
&c_i (\theta^*_{i-1} - \theta^*_i)^2\frac{\tau^*_{i-1} (\theta^*_{i-1})}{\tau^*_i(\theta^*_i)}  \\
&\qquad =\varphi_{i-1} \frac{\theta^*_i - \theta^*_{i-1}}{\theta^*_i - \theta^*_{i-2}} - \varphi_i + \varphi_{i+1} \frac{\theta^*_{i-1} - \theta^*_i}{\theta^*_{i-1} - \theta^*_{i+1}}
- (\theta_{i-1}-\theta_i)(\theta^*_{i-1}-\theta^*_i).
\end{align*}
For $d\geq 2$,
\begin{align*}
&c_1 (\theta^*_{1} - \theta^*_0)  = - \varphi_1 + \varphi_{2} \frac{\theta^*_{0} - \theta^*_1}{\theta^*_{0} - \theta^*_{2}}
- (\theta_{0}-\theta_1)(\theta^*_{0}-\theta^*_1), \\
&c_d (\theta^*_{d-1} - \theta^*_d)^2\frac{\tau^*_{d-1} (\theta^*_{d-1})}{\tau^*_d(\theta^*_d)}  
 =\varphi_{d-1} \frac{\theta^*_d - \theta^*_{d-1}}{\theta^*_d - \theta^*_{d-2}} - \varphi_d - (\theta_{d-1}-\theta_d)(\theta^*_{d-1}-\theta^*_d).
\end{align*}
For $d=1$,
\begin{align*}
c_1 (\theta^*_{1} - \theta^*_0)  = - \varphi_1 
- (\theta_{0}-\theta_1)(\theta^*_{0}-\theta^*_1).
\end{align*}
\end{theorem}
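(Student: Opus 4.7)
The plan is to derive Theorem \ref{thm:m4} as a direct algebraic reformulation of Proposition \ref{prop:m4}, using the scalar identities linking $x_i, b_i, c_i$ to $\varphi_i$ and the polynomials $\tau^*_j$ that are recorded in the paragraph immediately before the theorem. Since Proposition \ref{prop:m4} is already proved and expresses the quantity $(\theta^*_{i-1}-\theta^*_i)^2 x_i/\varphi_i$ precisely in the form of the right-hand side of Theorem \ref{thm:m4}, the entire task reduces to rewriting the left-hand side in terms of $c_i$.

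First I will recall the formulas $x_i = c_i b_{i-1}$ for $1 \leq i \leq d$ and $b_{i-1} = \varphi_i \tau^*_{i-1}(\theta^*_{i-1})/\tau^*_i(\theta^*_i)$ for $1 \leq i \leq d$, both of which are stated just before the theorem. Combining these two identities gives
\begin{align*}
\frac{x_i}{\varphi_i} = c_i \, \frac{\tau^*_{i-1}(\theta^*_{i-1})}{\tau^*_i(\theta^*_i)} \qquad (1 \leq i \leq d),
\end{align*}
so multiplying both sides by $(\theta^*_{i-1}-\theta^*_i)^2$ yields the quantity appearing in Theorem \ref{thm:m4}. Substituting this expression for the left-hand side of Proposition \ref{prop:m4} produces the main identity in the range $2 \leq i \leq d-1$.

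For the boundary cases I will treat $i=1$ and $i=d$ separately. For $i=1$ note that $\tau^*_0(\theta^*_0) = 1$ (empty product) and $\tau^*_1(\theta^*_1) = \theta^*_1 - \theta^*_0$, so $x_1/\varphi_1 = c_1/(\theta^*_1 - \theta^*_0)$, whence $(\theta^*_0-\theta^*_1)^2 x_1/\varphi_1 = c_1(\theta^*_1 - \theta^*_0)$; this gives the $d\geq 2$, $i=1$ identity as well as the $d=1$ identity after substitution into the corresponding cases of Proposition \ref{prop:m4}. The $i=d$ case with $d\geq 2$ follows by substituting the general formula for $x_d/\varphi_d$ into the third boundary equation of Proposition \ref{prop:m4}.

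There is no substantive obstacle here; the only thing to take care with is sign tracking, since $(\theta^*_{i-1}-\theta^*_i)^2 = (\theta^*_i-\theta^*_{i-1})^2$ while the factor $\tau^*_i(\theta^*_i)/\tau^*_{i-1}(\theta^*_{i-1})$ contains an odd (namely one) power of $\theta^*_i-\theta^*_{i-1}$, so cancellation leaves a single signed factor on the right-hand side in the $i=1$ boundary case. Once the signs are handled, each case is an immediate rewriting of Proposition \ref{prop:m4}.
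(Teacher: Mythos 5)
Your proposal is correct and follows the paper's own route exactly: Theorem \ref{thm:m4} is obtained from Proposition \ref{prop:m4} by eliminating $x_i$ via $x_i = c_i b_{i-1}$ and $b_{i-1} = \varphi_i \tau^*_{i-1}(\theta^*_{i-1})/\tau^*_i(\theta^*_i)$, with the boundary cases handled by the same substitution. Your explicit sign check in the $i=1$ case (where $\tau^*_1(\theta^*_1) = \theta^*_1 - \theta^*_0$ cancels one factor of $(\theta^*_0-\theta^*_1)^2$ up to sign) is the only delicate point, and you have handled it correctly.
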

\begin{proof} In Proposition \ref{prop:m4},
eliminate $x_i$ using $x_i=c_i b_{i-1}$ and
\begin{align*}
b_{i-1} = \varphi_i \frac{ \tau^*_{i-1} (\theta^*_{i-1})}{\tau^*_i(\theta^*_i)}.
\end{align*}
The result follows.
\end{proof}

\noindent Next, we describe Theorem \ref{thm:rest} under the assumption that $\Phi$ is a Leonard system.
\begin{theorem}
\label{thm:rest2}
For $0 \leq i,j\leq d$ such that $j-i \geq 2$, 
\begin{align*}
0&= \sum_{i \leq s \leq j} \theta_s \frac{ 1}{ (\theta^*_i-\theta^*_s)(\theta^*_i - \theta^*_{s-1})\cdots (\theta^*_i - \theta^*_{i+1})} \; \frac{1}{(\theta^*_j - \theta^*_s)(\theta^*_j - \theta^*_{s+1}) \cdots (\theta^*_j - \theta^*_{j-1})} \\
&+ \sum_{\stackrel{i \leq r\leq d, }{\stackrel{0\leq s \leq j, }{\stackrel{r-s=1}{}}}}
 \frac{1}{ (\theta^*_i-\theta^*_r)(\theta^*_i - \theta^*_{r-1}) \cdots (\theta^*_i - \theta^*_{i+1})} \; \varphi_r \; \frac{1}{(\theta^*_j - \theta^*_s)(\theta^*_j -\theta^*_{s+1})\cdots (\theta^*_j - \theta^*_{j-1})}.
\end{align*}
\end{theorem}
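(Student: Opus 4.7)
The plan is to read Theorem \ref{thm:rest} as an operator identity on the one-dimensional subspace $F_jV$ and convert it into a scalar equation. Since $\Phi$ is a Leonard system, every $F_kV$ has dimension $1$; I pick a nonzero $\zeta\in E^*_0V=F_0V$ and, by Lemma \ref{lem:bij2}(i), $v_k:=\mathcal R^k\zeta$ is a basis of $F_kV$ for $0\leq k\leq d$.

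The matrix representation of $\mathcal L$ displayed just before \eqref{eq:FRL} gives $\mathcal L v_k=\varphi_k v_{k-1}$ for $1\leq k\leq d$ and $\mathcal L v_0=0$; telescoping yields $\mathcal L^m v_k=\bigl(\prod_{\ell=k-m+1}^{k}\varphi_\ell\bigr)\,v_{k-m}$ whenever $0\leq m\leq k$. The matrix of $\mathcal R$ displayed there also gives $\mathcal R v_s=v_{s+1}$ for $0\leq s\leq d-1$. I would now apply Theorem \ref{thm:rest} to $v_j$ and simplify each summand using these two identities. In the first sum, $\mathcal L^{s-i}\mathcal L^{j-s}v_j=\mathcal L^{j-i}v_j=\bigl(\prod_{\ell=i+1}^{j}\varphi_\ell\bigr)v_i$, independent of $s$. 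In the second sum, for a pair $(r,s)$ with $r=s+1$, the chain $v_j\mapsto \mathcal L^{j-s}v_j\mapsto \mathcal R\mathcal L^{j-s}v_j\mapsto \mathcal L^{r-i}\mathcal R\mathcal L^{j-s}v_j$ produces $\bigl(\prod_{\ell=s+1}^{j}\varphi_\ell\bigr)\bigl(\prod_{\ell=i+1}^{r}\varphi_\ell\bigr)v_i$, which (using $r=s+1$) collapses to $\varphi_r\bigl(\prod_{\ell=i+1}^{j}\varphi_\ell\bigr)v_i$.

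Therefore Theorem \ref{thm:rest}, evaluated at $v_j$, reads $0=\bigl(\prod_{\ell=i+1}^{j}\varphi_\ell\bigr)\,\Xi\, v_i$, where $\Xi$ is precisely the scalar on the right-hand side of the statement of Theorem \ref{thm:rest2}. Because every $\varphi_\ell\neq 0$ and $v_i\neq 0$, we conclude $\Xi=0$, which is the claimed identity. The only mild subtlety is the boundary bookkeeping when $s=i$ or $s=j$, in which case one of the exponents of $\mathcal L$ vanishes and the corresponding denominator product in the formula is empty; these are absorbed by the standard convention that empty products equal $1$, so the derivation is uniform. I do not expect any serious obstacle: the entire argument is bookkeeping once the elementary actions of $\mathcal R$ and $\mathcal L$ on $\{v_k\}$ are in hand.
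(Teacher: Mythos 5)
Your proposal is correct and is essentially the paper's own argument: the paper's proof simply says to express Theorem \ref{thm:rest} in the matrix representations with respect to the basis $\lbrace \mathcal R^k \zeta\rbrace_{k=0}^d$ (where $\mathcal R$ is a subdiagonal of $1$'s and $\mathcal L$ a superdiagonal of $\varphi$'s) and cancel the nonzero factor $\varphi_{i+1}\cdots\varphi_j$, which is exactly the computation you carry out, boundary cases included.
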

\begin{proof} Express Theorem \ref{thm:rest} using the matrix representations  below Proposition \ref{thm:TDaixi}, and simplify
the result using $\varphi_\ell \not=0$ $(1 \leq \ell \leq d)$.    
\end{proof}

\noindent Next, we describe Theorem \ref{thm:restDual} under the assumption that $\Phi$ is a Leonard system.
\begin{theorem}
\label{thm:rest3}
For $0 \leq i,j\leq d$ such that $j-i \geq 2$, 
\begin{align*}
0&= \sum_{i \leq s \leq j} \theta^*_s 
\frac{1}{(\theta_j - \theta_s)(\theta_j - \theta_{s+1})\cdots (\theta_j - \theta_{j-1})} \;
\frac{1}{ (\theta_i-\theta_s)(\theta_i - \theta_{s-1}) \cdots (\theta_i - \theta_{i+1})}
 \\
&+ \sum_{\stackrel{i \leq r\leq d, }{\stackrel{0\leq s \leq j, }{\stackrel{r-s=1}{}}}}
 \frac{1}{(\theta_j - \theta_s)(\theta_j - \theta_{s+1})\cdots (\theta_j - \theta_{j-1})}
 \; \varphi_r \; 
 \frac{1}{ (\theta_i-\theta_r)(\theta_i - \theta_{r-1}) \cdots (\theta_i - \theta_{i+1})}.
\end{align*}
\end{theorem}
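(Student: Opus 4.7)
The plan is to mimic the proof of Theorem \ref{thm:rest2}: express the operator equation of Theorem \ref{thm:restDual} using the matrix representations of $\mathcal R, \mathcal L$ in the basis $\lbrace \mathcal R^\ell \zeta\rbrace_{\ell=0}^d$ introduced just below Theorem \ref{thm:TDaixi}, where $0 \neq \zeta \in E^*_0V = F_0V$. Since $\Phi$ is a Leonard system, each $F_iV$ is one-dimensional and spanned by $\mathcal R^i \zeta$ by Lemma \ref{lem:bij2}(i), so the operator equation on $F_iV$ reduces to a single scalar equation obtained by applying it to $\mathcal R^i\zeta$.

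First I would record that $\mathcal R \cdot \mathcal R^\ell \zeta = \mathcal R^{\ell+1}\zeta$ by construction, and that
\begin{align*}
\mathcal L \mathcal R^\ell \zeta = \varphi_\ell \, \mathcal R^{\ell-1}\zeta \qquad (1 \leq \ell \leq d),
\end{align*}
which follows from \eqref{eq:FRL} applied at $\mathcal R^\ell \zeta$ together with the injectivity of $\mathcal R : F_{\ell-1}V \to F_\ell V$.

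Next, I would apply Theorem \ref{thm:restDual} to $\mathcal R^i \zeta$. In the first sum, $\mathcal R^{s-i}\mathcal R^i\zeta = \mathcal R^s\zeta$ and then $\mathcal R^{j-s}\mathcal R^s\zeta = \mathcal R^j\zeta$, so the $s$-summand becomes $\theta^*_s$ divided by the appropriate product of differences of $\theta$'s, times $\mathcal R^j\zeta$. In the second sum, with $r = s+1$, the identity above yields $\mathcal L \mathcal R^{r-i} \mathcal R^i \zeta = \mathcal L \mathcal R^r \zeta = \varphi_r \mathcal R^s \zeta$, after which $\mathcal R^{j-s}$ sends this to $\varphi_r \mathcal R^j \zeta$. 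Every term is therefore a scalar multiple of the common nonzero vector $\mathcal R^j\zeta$, and dividing out this factor produces the claimed identity.

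There is essentially no obstacle beyond careful bookkeeping. The key observation, distinguishing this computation from the proof of Theorem \ref{thm:rest2}, is that $\mathcal L$ appears only once in the second sum of Theorem \ref{thm:restDual}, so exactly one factor of $\varphi_r$ arises in each summand --- producing $\varphi_r$ in the numerator, exactly as required. Alternatively, one could attempt to deduce the result by applying Theorem \ref{thm:rest2} to the dual tridiagonal system $\Phi^\times = (A^*; \lbrace E^*_{d-\ell}\rbrace_{\ell=0}^d; A; \lbrace E_{d-\ell}\rbrace_{\ell=0}^d)$ in the style of the proof of Theorem \ref{thm:restDual}, but this route requires first identifying the first split sequence of $\Phi^\times$, whereas the direct approach sketched above avoids that issue.
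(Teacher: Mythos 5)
Your proposal is correct and follows essentially the same route as the paper: the paper's proof is exactly to express Theorem \ref{thm:restDual} in the matrix representations with respect to the basis $\lbrace \mathcal R^\ell \zeta\rbrace_{\ell=0}^d$ (equivalently, to apply the operator identity to $\mathcal R^i\zeta$ and extract the coefficient of $\mathcal R^j\zeta$). Your side remark that here, unlike in Theorem \ref{thm:rest2}, no cancellation of common $\varphi$-factors is needed is accurate but does not change the argument.
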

\begin{proof} Express Theorem \ref{thm:restDual} using the matrix representations  below Proposition \ref{thm:TDaixi}, and simplify
the result using $\varphi_\ell \not=0$ $(1 \leq \ell \leq d)$.    
\end{proof}

\noindent Next, we describe Corollary \ref{lem:RRRL} under the assumption that $\Phi$ is a Leonard system.
\begin{theorem}
\label{thm:m5}
For $2 \leq j \leq d$,
\begin{align*}
&\varphi_{j-2} - (\beta+1)\varphi_{j-1} + (\beta+1) \varphi_j- \varphi_{j+1} \\
&\quad =
(\beta+1) \Bigl( (\theta_{j-1}-\theta_{j-2})(\theta^*_{j-1}-\theta^*_{j-2})- (\theta_{j-1} - \theta_j)(\theta^*_{j-1}-\theta^*_j)       \Bigr),
\end{align*}
where $\varphi_0=0$ and $\varphi_{d+1}=0$.
\end{theorem}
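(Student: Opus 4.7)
The plan is to read off the desired identity from Corollary \ref{lem:RRRL} by applying the first vanishing map in \eqref{eq:omega1} to the (one-dimensional) subspace $F_j V$, using the explicit matrix representations of $\mathcal R, \mathcal L$ recorded just before Proposition \ref{prop:3termai}.

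First I would fix $0 \neq \zeta \in E^*_0 V = F_0 V$ and work in the basis $v_i := \mathcal R^i \zeta$ of $V$ (so $v_i$ spans $F_i V$, by Lemma \ref{lem:bij2}(i)). From the matrix forms of $\mathcal R, \mathcal L$ one has
\begin{align*}
\mathcal R v_i = v_{i+1} \quad (0 \leq i \leq d-1), \qquad \mathcal R v_d = 0,\\
\mathcal L v_i = \varphi_i v_{i-1} \quad (1 \leq i \leq d), \qquad \mathcal L v_0 = 0,
\end{align*}
which, with the conventions $\varphi_0 = 0$ and $\varphi_{d+1} = 0$, subsume the boundary cases.

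Next I would compute, for $2 \leq j \leq d$, the action of each of the five monomials in \eqref{eq:omega1} on $v_j$. Iterating the two recursions above, each monomial sends $v_j$ to a scalar multiple of $v_{j-2}$; explicitly,
\begin{align*}
\mathcal R \mathcal L^3 v_j &= \varphi_j \varphi_{j-1} \varphi_{j-2}\, v_{j-2}, & \mathcal L \mathcal R \mathcal L^2 v_j &= \varphi_j \varphi_{j-1}^2\, v_{j-2}, \\
\mathcal L^2 \mathcal R \mathcal L v_j &= \varphi_j^2 \varphi_{j-1}\, v_{j-2}, & \mathcal L^3 \mathcal R v_j &= \varphi_{j+1} \varphi_j \varphi_{j-1}\, v_{j-2}, \\
\mathcal L^2 v_j &= \varphi_j \varphi_{j-1}\, v_{j-2}.
\end{align*}
Substituting these into \eqref{eq:omega1} (which vanishes on $F_j V$) and using $v_{j-2} \neq 0$ yields
\begin{align*}
\varphi_j \varphi_{j-1} \Bigl( \varphi_{j-2} - (\beta+1)\varphi_{j-1} + (\beta+1)\varphi_j - \varphi_{j+1} - (\beta+1) e_j \Bigr) = 0.
\end{align*}
Since $2 \leq j \leq d$, both $\varphi_j$ and $\varphi_{j-1}$ are nonzero, so I may divide them out; recalling the formula \eqref{eq:Omega} for $e_j$ gives exactly the claimed identity.

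The only delicate points are verifying that the boundary values $\varphi_0 = 0$ and $\varphi_{d+1} = 0$ correctly capture the cases $j=2$ and $j=d$ (which they do, because $\mathcal L v_0 = 0$ and $\mathcal R v_d = 0$), and ensuring that the factor $\varphi_j \varphi_{j-1}$ by which one divides is indeed nonzero throughout the allowed range of $j$. Both are immediate from the assumptions already in force, so no genuine obstacle remains.
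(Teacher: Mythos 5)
Your proposal is correct and follows essentially the same route as the paper, which simply says to express Corollary \ref{lem:RRRL} in the matrix representations (equivalently, the basis $\{\mathcal R^i\zeta\}_{i=0}^d$) and cancel the nonzero factor $\varphi_j\varphi_{j-1}$; your computation of the five monomials acting on $v_j$ and the handling of the boundary cases $j=2$, $j=d$ via $\varphi_0=0$, $\varphi_{d+1}=0$ are exactly the details the paper leaves implicit.
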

\begin{proof} Express Corollary \ref{lem:RRRL} using the matrix representations below Proposition \ref{thm:TDaixi},
and simplify
the result using $\varphi_\ell \not=0$ $(1 \leq \ell \leq d)$. 
\end{proof}

\section{Assuming $\Phi$ has Krawtchouk type}
We continue to discuss the tridiagonal system $\Phi = ( A; \lbrace E_i \rbrace_{i=0}^d; A^*; \lbrace E^*_i \rbrace_{i=0}^d )$ on $V$.
Recall the eigenvalue sequence $\lbrace \theta_i \rbrace_{i=0}^d$ and dual eigenvalue sequence $\lbrace \theta^*_i \rbrace_{i=0}^d$.
We say that $\Phi$ has {\it Krawtchouk type} whenever
\begin{align}
 \theta_i = d-2i, \qquad \qquad \theta^*_i = d-2i, \qquad \qquad (0 \leq i \leq d).
 \label{eq:Kraw}
  \end{align}
See \cite{jtgo, hartwig, ItoKraw, ItoTet, nomKraw}  for some basic facts about tridiagonal systems of Krawtchouk type. 
Throughout this section we assume that $\Phi$ has Krawtchouk type.
Our goal is to describe what our earlier results become under this assumption.
\medskip

\noindent By \eqref{eq:Kraw} and since $\lbrace \theta_i \rbrace_{i=0}^d$ are mutually distinct, the characteristic ${\rm Char}(\mathbb F)$ is either 0 or
an odd prime greater than $d$.
Evaluating Lemma \ref{lem:beta}  using \eqref{eq:Kraw}, we may assume
\begin{align}
\beta = 2, \qquad \gamma=0, \qquad \gamma^*=0, \qquad \varrho=4, \qquad \varrho^*=4. \label{eq:gamData}
\end{align}
Evaluating the tridiagonal relations \eqref{eq:TD1}, \eqref{eq:TD2} using \eqref{eq:gamData}, we obtain
\begin{align}
\lbrack A, \lbrack A, \lbrack A, A^* \rbrack \rbrack \rbrack &= 4 \lbrack A, A^* \rbrack, \label{eq:DG1} \\
\lbrack A^*, \lbrack A^*, \lbrack A^*, A \rbrack \rbrack \rbrack &= 4 \lbrack A^*, A \rbrack. \label{eq:DG2}
\end{align}
The relations \eqref{eq:DG1}, \eqref{eq:DG2} are called the Dolan/Grady relations; see for example \cite[Example~3.2]{qSerre}.
\medskip

\noindent 
Recall the maps $R,F,L$ from Definition \ref{def:RFL}.
\begin{lemma}
\label{lem:RFLKpre}
We have
\begin{align*}
\lbrack A^*, L \rbrack = 2 L, \qquad \qquad \lbrack A^*, F \rbrack = 0, \qquad \qquad \lbrack A^*, R \rbrack=-2R.
\end{align*}
\end{lemma}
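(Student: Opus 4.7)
The plan is to prove the three commutator identities by direct computation from Definition \ref{def:RFL}, using only the expansion $A^* = \sum_{i=0}^d \theta^*_i E^*_i$ from \eqref{eq:AAs}, the orthogonality $E^*_i E^*_j = \delta_{i,j} E^*_i$, and the Krawtchouk values $\theta^*_i = d-2i$ which give $\theta^*_{i+1}-\theta^*_i = -2$ and $\theta^*_{i-1}-\theta^*_i = 2$.

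For $R$, multiply $A^*$ against $R = \sum_{i=0}^{d-1} E^*_{i+1} A E^*_i$ on the left and on the right. Using orthogonality of the $E^*_j$, the left multiplication collapses each summand to $\theta^*_{i+1} E^*_{i+1} A E^*_i$, while the right multiplication collapses each summand to $\theta^*_i E^*_{i+1} A E^*_i$. Subtracting gives
\[
[A^*, R] = \sum_{i=0}^{d-1} (\theta^*_{i+1} - \theta^*_i) E^*_{i+1} A E^*_i = -2\sum_{i=0}^{d-1} E^*_{i+1} A E^*_i = -2R.
\]
An entirely analogous computation for $L = \sum_{i=1}^d E^*_{i-1} A E^*_i$ yields
\[
[A^*, L] = \sum_{i=1}^d (\theta^*_{i-1}-\theta^*_i) E^*_{i-1} A E^*_i = 2L.
\]
For $F = \sum_{i=0}^d E^*_i A E^*_i$, both $A^* F$ and $F A^*$ collapse to $\sum_{i=0}^d \theta^*_i E^*_i A E^*_i$, so $[A^*, F] = 0$.

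There is no real obstacle here; the step that requires a word of justification is the orthogonality-based collapse of the products $E^*_j E^*_{i+1}$ and $E^*_i E^*_j$, which is immediate from the primitive idempotent relations recalled in Section~2. The Krawtchouk hypothesis enters only through the explicit value of the differences $\theta^*_{i\pm 1} - \theta^*_i = \mp 2$; the structural identities themselves hold for any tridiagonal system, with the scalars $\mp 2$ replaced by the corresponding eigenvalue gaps.
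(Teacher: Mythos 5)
Your computation is correct and is essentially the paper's own argument in different notation: the paper verifies the same identities by restricting to each eigenspace $E^*_iV$ (via Lemma \ref{lem:RFLmeaning}) and using $\theta^*_{i\pm 1}-\theta^*_i=\mp 2$, which is exactly the idempotent collapse you carry out with $A^*=\sum_i\theta^*_iE^*_i$. Nothing further is needed.
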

\begin{proof}
We invoke Lemma \ref{lem:RFLmeaning}. For 
$0 \leq i \leq d$ the following holds on $E^*_iV$:
\begin{align*}
A^* L-L A^* - 2 L = \theta^*_{i-1} L - \theta^*_i L - 2L = (\theta^*_{i-1} - \theta^*_i -2 ) L = 0 L = 0.
\end{align*}
Therefore $\lbrack A^*, L \rbrack = 2 L$. The remaining assertions  are similarly obtained.
\end{proof}

\begin{lemma}
\label{lem:RFLK}
We have
\begin{align*}
R &= \frac{ \lbrack A^*, \lbrack A^*, A \rbrack \rbrack - 2 \lbrack A^*, A \rbrack}{8},  \\
F &= A - \frac{ \lbrack A^*, \lbrack A^*, A \rbrack \rbrack }{4},\\
L &= \frac{ \lbrack A^*, \lbrack A^*, A \rbrack \rbrack + 2 \lbrack A^*, A \rbrack}{8}.
\end{align*}
\end{lemma}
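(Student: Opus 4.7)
The plan is to use the decomposition $A = R + F + L$ from Lemma \ref{lem:ARFL} together with the commutator formulas in Lemma \ref{lem:RFLKpre} to compute $[A^*,A]$ and $[A^*,[A^*,A]]$ as explicit $\mathbb F$-linear combinations of $R, F, L$, then invert the resulting linear system.

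First I would expand
\[
[A^*,A] = [A^*,R] + [A^*,F] + [A^*,L] = -2R + 0 + 2L = 2L - 2R,
\]
using Lemma \ref{lem:RFLKpre}. Applying the same lemma a second time gives
\[
[A^*,[A^*,A]] = 2[A^*,L] - 2[A^*,R] = 4L + 4R.
\]
At this point the three quantities $A = R+F+L$, $[A^*,A] = 2L-2R$, and $[A^*,[A^*,A]] = 4L+4R$ form a linear system in the unknowns $R,F,L$ over $\mathbb F$ (with invertible coefficient matrix, since $\mathrm{Char}(\mathbb F)$ is $0$ or an odd prime greater than $d$, hence $2$ is invertible).

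Solving this system is immediate: adding and subtracting the last two equations gives
\[
\tfrac{1}{8}\bigl([A^*,[A^*,A]] - 2[A^*,A]\bigr) = \tfrac{1}{8}(4L+4R - 4L+4R) = R,
\]
\[
\tfrac{1}{8}\bigl([A^*,[A^*,A]] + 2[A^*,A]\bigr) = \tfrac{1}{8}(4L+4R + 4L-4R) = L,
\]
and then
\[
F = A - R - L = A - \tfrac{1}{4}[A^*,[A^*,A]].
\]
These are the three asserted formulas.

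There is no real obstacle here: once Lemma \ref{lem:RFLKpre} is in hand, the whole argument is a two-step commutator calculation followed by elementary linear algebra. The only mild thing to keep in mind is that we divide by $8$, which is legitimate because $\mathrm{Char}(\mathbb F)$ is $0$ or an odd prime exceeding $d\geq 1$.
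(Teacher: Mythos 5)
Your proposal is correct and follows essentially the same route as the paper: both derive $[A^*,A]=2L-2R$ and $[A^*,[A^*,A]]=4L+4R$ from Lemmas \ref{lem:ARFL} and \ref{lem:RFLKpre}, then solve the resulting linear system for $R,F,L$. Your write-up merely makes explicit the "routine" inversion step (and the invertibility of $8$) that the paper leaves to the reader.
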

\begin{proof}  By Lemmas \ref{lem:ARFL} and \ref{lem:RFLKpre},
\begin{align*}
\lbrack A^*, A \rbrack = 2L-2R, \qquad \qquad  \lbrack A^*, \lbrack A^*, A \rbrack \rbrack= 4L+4R.
\end{align*}
Combining these equations with Lemma  \ref{lem:ARFL}, we routinely obtain the result.
\end{proof}

\noindent Next, we describe Theorem \ref{thm:ggee}  under the assumption that $\Phi$ has Krawtchouk type.
\begin{theorem}
\label{thm:RFLKrel}
We have
\begin{align*}
&\lbrack L, \lbrack L, F\rbrack \rbrack=0, \qquad \qquad  \lbrack R, \lbrack R, F\rbrack \rbrack=0, \\
& \lbrack F, \lbrack F, L \rbrack \rbrack - 2\lbrack L, \lbrack L, R \rbrack \rbrack = 4L, \\
& \lbrack F, \lbrack F, R \rbrack \rbrack - 2\lbrack R, \lbrack R, L \rbrack \rbrack = 4R, \\
& \lbrack F, \lbrack L, R \rbrack \rbrack=0.
\end{align*}
\end{theorem}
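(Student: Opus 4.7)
The plan is to read all five identities directly off Theorem \ref{thm:ggee} after specializing the coefficients $g^\pm_i$, $e^\pm_i$ and $\beta,\gamma,\varrho$ to their Krawtchouk values. From \eqref{eq:gamData} we already have $\beta = 2$, $\gamma = 0$, $\varrho = 4$; substituting $\theta^*_j = d-2j$ into \eqref{eq:gpm} and \eqref{eq:epm}, a direct computation gives $g^+_i = g^-_i = -\tfrac12$ and $e^+_i = e^-_i = -2$ throughout their defined ranges. We also set the indeterminates $e^+_d$ and $e^-_1$ equal to $-2$, which is legitimate because Theorem \ref{thm:ggee}(ii) asserts the relevant equations hold for every value of these indeterminates.

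Substituting into Theorem \ref{thm:ggee}(i), the equation \eqref{eq:LLF} becomes $-\tfrac12 FL^2 + LFL - \tfrac12 L^2 F = 0$ on $E^*_iV$ for $2 \leq i \leq d$; multiplying by $-2$ this is $L^2F - 2LFL + FL^2 = [L,[L,F]] = 0$. Similarly \eqref{eq:RRF} produces $[R,[R,F]] = 0$ on $E^*_jV$ for $0 \leq j \leq d-2$. In Theorem \ref{thm:ggee}(ii), using $\beta + 2 = 4$, the first equation becomes $-2(RL^2 - 2LRL + L^2 R) + (LF^2 - 2FLF + F^2 L) = 4L$ on $E^*_iV$ for $1 \leq i \leq d$, which is exactly $[F,[F,L]] - 2[L,[L,R]] = 4L$; the second yields $[F,[F,R]] - 2[R,[R,L]] = 4R$ on $E^*_jV$ for $0 \leq j \leq d-1$. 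For Theorem \ref{thm:ggee}(iii), on each $E^*_iV$ we have $\theta^*_i - \theta^*_{i+1} = \theta^*_{i-1} - \theta^*_i = 2$, so the relation collapses to $[F, LR] = [F, RL]$, i.e.\ $[F,[L,R]] = 0$, on $E^*_iV$ for $0 \leq i \leq d$.

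Since $V = \sum_{i=0}^d E^*_iV$, it remains to extend each identity to the $E^*_iV$ not yet covered: $E^*_0V, E^*_1V$ for $[L,[L,F]] = 0$; $E^*_{d-1}V, E^*_dV$ for $[R,[R,F]] = 0$; $E^*_0V$ for the third identity; and $E^*_dV$ for the fourth. This is the main (and only) subtlety, but Lemma \ref{lem:RFLmeaning} makes it routine: since $L E^*_0V = 0$ and $R E^*_dV = 0$, a term-by-term check shows that on each such boundary subspace every summand of the commutator expression, together with the right-hand side, vanishes trivially. No new algebraic input is needed, and the argument amounts to careful bookkeeping of a finite list of degenerate cases.
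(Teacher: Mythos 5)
Your proposal is correct and follows exactly the paper's route: the paper's entire proof is ``Evaluate Theorem \ref{thm:ggee} using \eqref{eq:Kraw}, \eqref{eq:gamData},'' and you have simply carried out that evaluation in full, including the correct values $g^{\pm}_i=-\tfrac12$, $e^{\pm}_i=-2$, the freedom in the indeterminates $e^+_d, e^-_1$, and the vanishing of all terms on the boundary subspaces via Lemma \ref{lem:RFLmeaning}.
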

\begin{proof} Evaluate Theorem \ref{thm:ggee} using \eqref{eq:Kraw}, \eqref{eq:gamData}.
\end{proof}

\noindent 
Next, we describe the bijections in Lemma \ref{lem:Bij} under assumption that $\Phi$ has Krawtchouk type. We will use the
exponential function 
\begin{align*}
{\rm exp}\, z = \sum_{n =0}^\infty \frac{z^n}{n!}.
\end{align*}
\noindent Recall the maps $\mathcal R, \mathcal L$ from Definition \ref{def:RRLL}.
\begin{proposition}
\label{prop:exp} We have
\begin{align*}
\sum_{\ell =0}^d F_\ell E^*_\ell= {\rm exp} ( \mathcal L/2), \qquad \qquad \sum_{\ell=0}^d E^*_\ell F_\ell = {\rm exp} (-\mathcal L/2). 
\end{align*}
\end{proposition}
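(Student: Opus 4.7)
The plan is to derive the first identity from Proposition \ref{prop:EFform}, after specializing the denominator product to its Krawtchouk value, and then to obtain the second identity by inversion via Lemma \ref{lem:Bij}(iii). First I would observe that, under $\theta^*_i = d - 2i$, one has $\theta^*_\ell - \theta^*_k = 2(k-\ell)$, so for $0 \leq \ell \leq j \leq d$ the denominator in Proposition \ref{prop:EFform} collapses to $\prod_{k=\ell+1}^{j}2(k-\ell) = 2^{j-\ell}(j-\ell)!$. Consequently
\[
E^*_\ell F_j = E^*_\ell F_\ell\,\frac{\mathcal L^{j-\ell}}{2^{j-\ell}(j-\ell)!}\qquad(0 \leq \ell \leq j \leq d).
\]

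Next I would insert $I = \sum_{j=0}^d F_j$ on the right of $F_\ell E^*_\ell$. Lemma \ref{lem:FE} kills the terms with $j < \ell$, and Lemma \ref{lem:FEF} simplifies $F_\ell E^*_\ell F_\ell$ to $F_\ell$, producing $F_\ell E^*_\ell F_j = F_\ell \mathcal L^{j-\ell}/(2^{j-\ell}(j-\ell)!)$ for $\ell \leq j$. To move $F_\ell$ past $\mathcal L^{j-\ell}$, I would verify the commutation $F_\ell \mathcal L = \mathcal L F_{\ell+1}$: Lemma \ref{lem:LLmeaning2} gives $F_\ell A^* = \sum_j F_\ell A^* F_j = \theta^*_\ell F_\ell + \mathcal L F_{\ell+1}$ (only the $j=\ell,\ell+1$ terms survive), and combining this with $\mathcal L = A^* - \sum_k \theta^*_k F_k$ yields the claim. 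Iterating supplies $F_\ell \mathcal L^n = \mathcal L^n F_{\ell+n}$.

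Then I would sum over $\ell$ and reindex with $k = \ell + n$:
\[
\sum_{\ell=0}^d F_\ell E^*_\ell = \sum_{n=0}^d \frac{\mathcal L^n}{2^n n!}\sum_{k=n}^{d} F_k.
\]
Since $\mathcal L^n F_k = 0$ for $k < n$ by Lemma \ref{lem:cRLnil}, the inner sum may be extended to $\sum_{k=0}^d F_k = I$, and the nilpotency $\mathcal L^{d+1}=0$ ensures that the outer series is just the polynomial $\exp(\mathcal L/2)$. For the second identity, Lemma \ref{lem:Bij}(iii) asserts that $\sum_\ell E^*_\ell F_\ell$ is the two-sided inverse of $\sum_\ell F_\ell E^*_\ell$, and since $\mathcal L$ commutes with itself this inverse is $\exp(-\mathcal L/2)$. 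The main obstacle is the double-sum bookkeeping in the reindexing; the commutation $F_\ell \mathcal L^n = \mathcal L^n F_{\ell+n}$ is the only ingredient not already in the excerpt, but it is short.
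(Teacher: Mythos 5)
Your proof is correct and takes essentially the same route as the paper: the paper expands $I=\bigl(\sum_i F_i\bigr)\bigl(\sum_j E^*_j\bigr)$ and applies Proposition \ref{prop:FEform} (whose $\mathcal L$-powers already sit on the left, so no commutation step is needed) to obtain $I=\exp(-\mathcal L/2)\sum_j F_j E^*_j$, then invokes Lemma \ref{lem:Bij}(iii) exactly as you do. Your variant via Proposition \ref{prop:EFform}, Lemma \ref{lem:FEF}, and the commutation $F_\ell \mathcal L^{n}=\mathcal L^{n}F_{\ell+n}$ (which is just Lemma \ref{lem:LLmeaning2} iterated) is equally valid and only reshuffles the bookkeeping.
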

\begin{proof} Using in order Lemma \ref{lem:FE}, Proposition \ref{prop:FEform}, \eqref{eq:Kraw}, Lemma \ref{lem:cRLnil} we obtain
\begin{align*}
I &= \Biggl( \sum_{i=0}^d F_i \Biggr) \Biggl( \sum_{j=0}^d E^*_j \Biggr) 
=\sum_{j=0}^d \sum_{i=0}^j F_i E^*_j \\
&=\sum_{j=0}^d \sum_{i=0}^j \frac{\mathcal L^{j-i} }{(\theta^*_j - \theta^*_i)(\theta^*_j - \theta^*_{i+1})\cdots (\theta^*_j - \theta^*_{j-1})} F_j E^*_j 
=\sum_{j=0}^d \sum_{i=0}^j \frac{(-1)^{j-i}\mathcal L^{j-i} }{(j-i)! 2^{j-i}} F_j E^*_j  \\
&= \sum_{j=0}^d {\rm exp}(-\mathcal L/2) F_j E^*_j 
= {\rm exp}(-\mathcal L/2) \sum_{j=0}^d F_j E^*_j.
\end{align*}
The result follows from this and Lemma  \ref{lem:Bij}(iii).
\end{proof}

\noindent Next, we describe Corollary \ref{cor:c1}  under the assumption that $\Phi$ has Krawtchouk type.
\begin{theorem}
\label{cor:c1K}
We have
\begin{align*}
{\rm exp}(\mathcal L/2) R = \mathcal R \, {\rm exp}(\mathcal L/2).
\end{align*}
\end{theorem}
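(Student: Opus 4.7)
The plan is to derive this identity as an immediate consequence of Theorem \ref{thm:m1} together with Proposition \ref{prop:exp}. Specifically, Theorem \ref{thm:m1} asserts that, for $0 \leq j \leq d-1$, the identity
\begin{align*}
\Bigl( \sum_{\ell=0}^d F_\ell E^*_\ell \Bigr) R = \mathcal{R} \Bigl( \sum_{\ell=0}^d F_\ell E^*_\ell \Bigr)
\end{align*}
holds when applied to any vector in $E^*_j V$. To upgrade this to an equation in ${\rm End}(V)$, I would check the boundary index $j=d$ separately: by Lemma \ref{lem:RFLmeaning} we have $R E^*_d = 0$, while by Lemma \ref{lem:RRmeaning} we have $\mathcal{R} F_d = 0$, so both sides of the putative identity vanish on $E^*_d V$. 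Since $V = \bigoplus_{j=0}^d E^*_j V$, the identity therefore holds globally on $V$.

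The second step is to invoke Proposition \ref{prop:exp}, which, under the Krawtchouk assumption \eqref{eq:Kraw}, supplies the equality $\sum_{\ell=0}^d F_\ell E^*_\ell = {\rm exp}(\mathcal L/2)$. Substituting this into the identity obtained in the previous step yields
\begin{align*}
{\rm exp}(\mathcal L/2)\, R = \mathcal R \, {\rm exp}(\mathcal L/2),
\end{align*}
which is the desired conclusion.

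This proof is essentially a one-line combination of two previously established results, so there is no serious obstacle. The only point requiring a modicum of care is the bookkeeping at the boundary $j=d$, which is not covered by the commuting diagram of Theorem \ref{thm:m1} but is easily handled via the vanishing statements in Lemmas \ref{lem:RFLmeaning} and \ref{lem:RRmeaning}.
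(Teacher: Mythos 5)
Your proof is correct and follows essentially the same route as the paper, which cites Corollary \ref{cor:c1} (of which Theorem \ref{thm:m1} is the stated reformulation) together with Proposition \ref{prop:exp}. Your explicit check of the boundary case $j=d$ via $RE^*_d=0$ and $\mathcal R F_d=0$ is a welcome bit of care that the paper leaves implicit.
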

\begin{proof} By Corollary \ref{cor:c1} and Proposition \ref{prop:exp}.
\end{proof}

\noindent Next, we describe Corollary \ref{cor:c2} under the assumption that $\Phi$ has Krawtchouk type.
\begin{theorem}
\label{cor:c2K}
We have
\begin{align*}
{\rm exp}(\mathcal L/2) F = \Bigl(A-\mathcal R + \lbrack \mathcal L, \mathcal R\rbrack/2 \Bigr)   {\rm exp}(\mathcal L/2).
\end{align*}
\end{theorem}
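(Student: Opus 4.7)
The approach mirrors the proof of Theorem \ref{cor:c1K}: combine Corollary \ref{cor:c2} with Proposition \ref{prop:exp}. The first task is to unify the three cases of Corollary \ref{cor:c2} into a single formula valid for all $0 \leq j \leq d$ under the Krawtchouk assumption. Using $\theta^*_i = d-2i$, we have $\theta^*_j - \theta^*_{j-1} = -2$ and $\theta^*_j - \theta^*_{j+1} = 2$, so for $1 \leq j \leq d-1$ the main case reads
\begin{align*}
F_j E^*_j A E^*_j = \Bigl( \theta_j I - \tfrac{1}{2}\mathcal R \mathcal L + \tfrac{1}{2}\mathcal L \mathcal R \Bigr) F_j E^*_j = \Bigl( \theta_j I + \tfrac{1}{2}[\mathcal L, \mathcal R] \Bigr) F_j E^*_j.
\end{align*}

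The mild subtlety is checking that the same unified formula holds at the boundary. For $j=0$, Lemma \ref{lem:RRmeaning} gives $\mathcal L F_0 = 0$, hence $\mathcal R \mathcal L F_0 E^*_0 = 0$, so $[\mathcal L, \mathcal R] F_0 E^*_0 = \mathcal L \mathcal R F_0 E^*_0$, matching the boundary formula in Corollary \ref{cor:c2}. Symmetrically, $\mathcal R F_d = 0$ yields $[\mathcal L, \mathcal R] F_d E^*_d = -\mathcal R \mathcal L F_d E^*_d$, matching the other boundary formula.

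With the unified identity in hand, I would then sum over $j$. On the left, Lemma \ref{lem:EsRFL} gives $E^*_j A E^*_j = F E^*_j$, so using $\sum_j E^*_j = I$ and $E^*_\ell E^*_j = \delta_{\ell j}E^*_j$ together with Proposition \ref{prop:exp} yields
\begin{align*}
\sum_{j=0}^d F_j E^*_j A E^*_j = \sum_{j=0}^d \Bigl( \sum_{\ell=0}^d F_\ell E^*_\ell\Bigr) F E^*_j = \text{exp}(\mathcal L/2) \, F.
\end{align*}
On the right, Definition \ref{def:RRLL} gives $A - \mathcal R = \sum_\ell \theta_\ell F_\ell$, so $(A - \mathcal R) F_j = \theta_j F_j$ by orthogonality of the $F_\ell$. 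Thus
\begin{align*}
\sum_{j=0}^d \Bigl( \theta_j I + \tfrac{1}{2}[\mathcal L, \mathcal R] \Bigr) F_j E^*_j = \Bigl( A - \mathcal R + \tfrac{1}{2}[\mathcal L, \mathcal R] \Bigr) \sum_{j=0}^d F_j E^*_j = \Bigl( A - \mathcal R + \tfrac{1}{2}[\mathcal L, \mathcal R] \Bigr) \text{exp}(\mathcal L/2),
\end{align*}
which is the desired identity.

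The main obstacle is purely bookkeeping: reconciling the boundary cases of Corollary \ref{cor:c2} with the unified interior formula. No tridiagonal relation or new computation is required beyond the Krawtchouk evaluations $\theta^*_j - \theta^*_{j\pm 1} = \mp 2$ and the vanishing $\mathcal L F_0 = \mathcal R F_d = 0$.
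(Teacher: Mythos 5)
Your proposal is correct and follows the paper's own (one-line) proof exactly: evaluate Corollary \ref{cor:c2} under \eqref{eq:Kraw}, absorb the boundary cases via $\mathcal L F_0=0$ and $\mathcal R F_d=0$, and sum against $\sum_\ell F_\ell E^*_\ell=\exp(\mathcal L/2)$ from Proposition \ref{prop:exp}. The bookkeeping with $E^*_jAE^*_j=FE^*_j$ and $A-\mathcal R=\sum_\ell\theta_\ell F_\ell$ is exactly the right way to assemble the two sides.
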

\begin{proof} By Corollary \ref{cor:c2}, \eqref{eq:Kraw}, and Proposition \ref{prop:exp}.
\end{proof}

\noindent Next, we describe Corollary \ref{cor:c3} under the assumption that $\Phi$ has Krawtchouk type.
\begin{theorem}
\label{cor:c3K}
We have
\begin{align*}
{\rm exp}(\mathcal L/2) L = \Bigl(-\mathcal L + \lbrack \mathcal L, \lbrack \mathcal L, \mathcal R\rbrack \rbrack/8 \Bigr)   {\rm exp}(\mathcal L/2).
\end{align*}
\end{theorem}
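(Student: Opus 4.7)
The plan is to mirror the proofs of Theorems \ref{cor:c1K} and \ref{cor:c2K}: start from the corresponding corollary of Section 8, here Corollary \ref{cor:c3}, specialize the scalar coefficients using the Krawtchouk relations \eqref{eq:Kraw}, and then identify the sum $\sum_{\ell=0}^d F_\ell E^*_\ell$ with $\mathrm{exp}(\mathcal L/2)$ via Proposition \ref{prop:exp}.

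For the main case $2 \leq j \leq d-1$, the four scalar coefficients in Corollary \ref{cor:c3} reduce under \eqref{eq:Kraw} to $-1$, $1/8$, $-1/4$, $1/8$, so the parenthesized operator collapses to
\[
-\mathcal L + \tfrac{1}{8}\bigl(\mathcal R \mathcal L^2 - 2\,\mathcal L \mathcal R \mathcal L + \mathcal L^2 \mathcal R\bigr) \;=\; -\mathcal L + \tfrac{1}{8}\lbrack \mathcal L, \lbrack \mathcal L, \mathcal R \rbrack \rbrack .
\]
Combining this with $L E^*_j = E^*_{j-1} A E^*_j$ from Lemma \ref{lem:EsRFL}, I obtain
\[
F_{j-1} L E^*_j \;=\; \Bigl(-\mathcal L + \tfrac{1}{8}\lbrack \mathcal L, \lbrack \mathcal L, \mathcal R \rbrack \rbrack\Bigr) F_j E^*_j \qquad (1 \leq j \leq d).
\]
Using the identity $F_\ell L E^*_{\ell+1} = F_\ell E^*_\ell L$ (and $E^*_d L = 0$), summing over $j$ reindexes the left-hand side to $\bigl(\sum_{\ell=0}^d F_\ell E^*_\ell\bigr) L$, and Proposition \ref{prop:exp} delivers the desired equation.

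The main obstacle will be the boundary cases. The compact bracket form arose from the generic $2 \leq j \leq d-1$ branch of Corollary \ref{cor:c3}, so the same identity must be verified in the three exceptional branches ($j=1$ with $d \geq 2$, $j=d$ with $d \geq 2$, and $j=d=1$), in each of which a summand of the parenthesized operator is missing from the original statement. One checks that each missing summand acts as zero on $F_j V$ anyway: $\mathcal R \mathcal L^2$ annihilates $F_1 V$ because $\mathcal L F_0 = 0$, while $\mathcal L^2 \mathcal R$ annihilates $F_d V$ because $\mathcal R F_d = 0$, and both vanish when $d=1$. One must also verify $\bigl(-\mathcal L + \tfrac{1}{8}\lbrack \mathcal L, \lbrack \mathcal L, \mathcal R \rbrack \rbrack\bigr) F_0 = 0$ in order to extend the right-hand summation to include $\ell = 0$; this follows by expanding the double bracket and applying $\mathcal L F_0 = 0$ together with $\mathcal R F_0 V \subseteq F_1 V$.
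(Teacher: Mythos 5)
Your proposal is correct and follows exactly the route of the paper's (very terse) proof, which simply cites Corollary \ref{cor:c3}, the Krawtchouk eigenvalue data \eqref{eq:Kraw}, and Proposition \ref{prop:exp}. The details you supply — the reduction of the four coefficients to $-1,\,1/8,\,-1/4,\,1/8$, the reindexing via $E^*_\ell L = E^*_\ell A E^*_{\ell+1}$ and $E^*_dL=0$, and the checks that the terms missing from the boundary branches of Corollary \ref{cor:c3} (and the $\ell=0$ summand on the right) act as zero — are all accurate and are precisely what the paper leaves implicit.
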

\begin{proof} By Corollary \ref{cor:c3},  \eqref{eq:Kraw}, and Proposition \ref{prop:exp}.
\end{proof}

\noindent Next, we describe Theorems \ref{thm:rest}, \ref{thm:restDual}  under the assumption that $\Phi$ has Krawtchouk type.
\begin{theorem}
\label{thm:restK}
For $\ell \geq 2$ we have
\begin{align*}
({\rm ad}\, \mathcal L)^{\ell + 1} (\mathcal R) = 0, \qquad \qquad ({\rm ad}\, \mathcal R)^{\ell + 1} (\mathcal L) = 0.
\end{align*}
We are using the adjoint notation  $({\rm ad}\,X) (Y) = \lbrack X,Y\rbrack$.
\end{theorem}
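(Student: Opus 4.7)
The plan is to reduce both identities to the base-case operator equations $({\rm ad}\, \mathcal L)^3(\mathcal R) = 0$ and $({\rm ad}\, \mathcal R)^3(\mathcal L) = 0$ on $V$, and then iterate. Once $({\rm ad}\, \mathcal L)^3(\mathcal R) = 0$ is established, applying ${\rm ad}\, \mathcal L$ a further $\ell - 2$ times yields $({\rm ad}\, \mathcal L)^{\ell+1}(\mathcal R) = 0$ for every $\ell \geq 2$; the dual identity is handled symmetrically.

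To obtain the base case, I would invoke Corollary \ref{lem:RRRL}. With $\beta = 2$ from \eqref{eq:gamData} and the Krawtchouk eigenvalues $\theta_i = \theta^*_i = d-2i$, the scalar $e_j$ in \eqref{eq:Omega} evaluates to $(-2)(-2) - (2)(2) = 0$ for $2 \leq j \leq d$. Thus \eqref{eq:omega1} collapses to $\mathcal R \mathcal L^3 - 3\mathcal L \mathcal R \mathcal L^2 + 3\mathcal L^2 \mathcal R \mathcal L - \mathcal L^3 \mathcal R$, which is the negative of the binomial expansion of $({\rm ad}\, \mathcal L)^3(\mathcal R)$. Hence $({\rm ad}\, \mathcal L)^3(\mathcal R)$ vanishes on $F_j V$ for $2 \leq j \leq d$. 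An analogous argument using \eqref{eq:omega2} shows that $({\rm ad}\, \mathcal R)^3(\mathcal L)$ vanishes on $F_k V$ for $0 \leq k \leq d-2$.

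The remaining step is to handle the edge subspaces not directly covered by Corollary \ref{lem:RRRL}, namely $F_0V, F_1V$ for the first identity and $F_{d-1}V, F_d V$ for the second. For these I would use the nilpotency relations from Lemma \ref{lem:cRLnil} ($\mathcal L F_0 = 0$, $\mathcal L^2 F_1 = 0$, $\mathcal R F_d = 0$, $\mathcal R^2 F_{d-1} = 0$) together with the grading in Lemma \ref{lem:RRmeaning}. For each of the four monomials $\mathcal L^a \mathcal R \mathcal L^{3-a}$ applied to $F_0V$ or $F_1V$, I would trace the successive images through the chain of $F_\ell V$ and observe that some power of $\mathcal L$ inevitably acts on a vector in $F_0V$ or $F_1V$ beyond its nilpotency index, forcing vanishing; the dual case is treated symmetrically. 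Combining this with $V = \sum_\ell F_\ell V$ completes the base case, and the iteration then yields the result for all $\ell \geq 2$. I expect the only mildly fiddly part to be this edge-case bookkeeping; the rest is a direct specialization of Corollary \ref{lem:RRRL}.
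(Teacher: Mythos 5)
Your proof is correct, but it takes a different route from the paper. The paper obtains the identity for each $\ell$ directly by specializing Theorems \ref{thm:rest}, \ref{thm:restDual} with $j-i=\ell$ to the Krawtchouk eigenvalues \eqref{eq:Kraw}: under that substitution the first (pure $\mathcal L^{j-i}$) sum vanishes by binomial identities, and the second sum becomes, up to a nonzero scalar, the binomial expansion of $({\rm ad}\,\mathcal L)^{\ell+1}(\mathcal R)$ restricted to $F_jV$ (the boundary terms excluded by the index ranges die by Lemma \ref{lem:cRLnil}), with the subspaces $F_jV$, $j<\ell$, handled by the same grading argument you use for your edge cases. You instead establish only the base case $({\rm ad}\,\mathcal L)^3(\mathcal R)=0$ — which is in fact Theorem \ref{lem:RRRLK}, proved later in the paper from Corollary \ref{lem:RRRL} — and then observe that $({\rm ad}\,\mathcal L)^{\ell+1}(\mathcal R)=({\rm ad}\,\mathcal L)^{\ell-2}\bigl(({\rm ad}\,\mathcal L)^{3}(\mathcal R)\bigr)=0$ for all $\ell\geq 2$. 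Your computation $e_j=(-2)(-2)-(2)(2)=0$ and the identification of \eqref{eq:omega1} with $-({\rm ad}\,\mathcal L)^3(\mathcal R)$ are right, and your edge-case bookkeeping on $F_0V,F_1V$ (resp. $F_{d-1}V,F_dV$) is both correct and necessary, since Corollary \ref{lem:RRRL} only covers $F_jV$ for $2\leq j\leq d$. What your approach buys is economy and a clear logical point: the general-$\ell$ statement is a trivial consequence of the $\ell=2$ case, so only one structural input is needed. What the paper's approach buys is the interpretation it advertises: Theorem \ref{thm:restK} is exhibited as the Krawtchouk-type specialization of the full family of relations in Theorems \ref{thm:rest}, \ref{thm:restDual}, one relation for each value of $j-i=\ell$, rather than as a formal corollary of the $\ell=2$ member of that family.
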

\begin{proof} By Theorems \ref{thm:rest}, \ref{thm:restDual} and \eqref{eq:Kraw}.
\end{proof}

\noindent Next, we describe Corollary \ref{lem:RRRL} under the assumption that $\Phi$ as Krawtchouk type.
\begin{theorem}
\label{lem:RRRLK}
We have
\begin{align*}
\lbrack \mathcal L, \lbrack \mathcal L, \lbrack \mathcal L, \mathcal R \rbrack \rbrack \rbrack=0, \qquad \qquad
\lbrack \mathcal R, \lbrack \mathcal R, \lbrack \mathcal R, \mathcal L \rbrack \rbrack \rbrack=0.
\end{align*}
\end{theorem}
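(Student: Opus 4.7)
The plan is to derive Theorem \ref{lem:RRRLK} by specializing Corollary \ref{lem:RRRL} to the Krawtchouk setting. Under \eqref{eq:Kraw} the eigenvalue and dual eigenvalue sequences have uniform consecutive differences, namely $\theta_{j-1}-\theta_{j-2} = \theta^*_{j-1}-\theta^*_{j-2} = -2$ and $\theta_{j-1}-\theta_j = \theta^*_{j-1}-\theta^*_j = 2$, so the scalar $e_j$ from \eqref{eq:Omega} collapses to
$$e_j = (-2)(-2) - (2)(2) = 0 \qquad (2 \leq j \leq d).$$
Inserting this together with $\beta+1 = 3$ from \eqref{eq:gamData} into \eqref{eq:omega1}, \eqref{eq:omega2}, the standard binomial expansion of an iterated commutator yields
$$\mathcal R \mathcal L^3 - 3\mathcal L\mathcal R\mathcal L^2 + 3\mathcal L^2\mathcal R\mathcal L - \mathcal L^3\mathcal R = -[\mathcal L,[\mathcal L,[\mathcal L,\mathcal R]]],$$
and analogously the second expression becomes $-[\mathcal R,[\mathcal R,[\mathcal R,\mathcal L]]]$.

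By Corollary \ref{lem:RRRL}, it follows that $[\mathcal L,[\mathcal L,[\mathcal L,\mathcal R]]]$ vanishes on $F_jV$ for $2 \leq j \leq d$, while $[\mathcal R,[\mathcal R,[\mathcal R,\mathcal L]]]$ vanishes on $F_jV$ for $0 \leq j \leq d-2$. Since $V = \sum_{i=0}^d F_iV$ is a direct sum, to finish the proof it suffices to check the first identity on the two missing components $F_0V, F_1V$ and the second on $F_{d-1}V, F_dV$.

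This boundary check is immediate from Lemma \ref{lem:RRmeaning} and is purely a matter of tracking indices. For the first identity, on $F_0V$ the relation $\mathcal L F_0V = 0$ kills each of the four monomials $\mathcal R\mathcal L^3, \mathcal L\mathcal R\mathcal L^2, \mathcal L^2\mathcal R\mathcal L, \mathcal L^3\mathcal R$ (the rightmost two $\mathcal L$'s always encounter $F_0V$ or $F_{-1}V$); on $F_1V$ the stronger observation $\mathcal L^2 F_1V = 0$, combined with the fact that $\mathcal R$ raises the index by exactly one, again annihilates every monomial in turn. The cases $F_{d-1}V, F_dV$ for the second identity are handled by the mirror argument using $\mathcal R F_dV = 0$ and $\mathcal R^2 F_{d-1}V \subseteq F_{d+1}V = 0$.

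No real obstacle is expected: the only conceptual step is recognizing that the polynomial identities of Corollary \ref{lem:RRRL} literally become triple commutators once the Krawtchouk specialization forces $e_j = 0$ and $\beta+1 = 3$, and that the ``missing'' $F_iV$ at the two ends of the split decomposition are handled trivially by the nilpotence boundary conditions for $\mathcal R, \mathcal L$ recorded in Lemma \ref{lem:RRmeaning}.
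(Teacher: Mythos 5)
Your argument is correct and is exactly the paper's (one-line) proof made explicit: specialize Corollary \ref{lem:RRRL} via $e_j=0$ and $\beta+1=3$, then handle the boundary components $F_0V,F_1V$ (resp.\ $F_{d-1}V,F_dV$) that the corollary does not cover, which you do correctly using Lemma \ref{lem:RRmeaning}. One harmless slip: with $\beta+1=3$ and $e_j=0$ the expression \eqref{eq:omega2} equals $+\lbrack \mathcal R,\lbrack \mathcal R,\lbrack \mathcal R,\mathcal L\rbrack\rbrack\rbrack$ rather than its negative (only \eqref{eq:omega1} acquires the minus sign), but since both expressions are being shown to vanish this does not affect the conclusion.
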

\begin{proof}
By Corollary \ref{lem:RRRL} and \eqref{eq:Kraw}.
\end{proof}

\noindent For the rest of this section, we assume that $\Phi$ is both a Leonard system and has Krawtchouk type.
The following result is well known, see for example \cite[Example~20.11]{LSnotes}.
\begin{lemma} 
\label{thm:LSK}
There exists
$p \in \mathbb F \backslash \lbrace 0, 1\rbrace$ such that
\begin{align*}
b_i = 2p(d-i), \qquad \qquad c_i = 2(1-p)i, \qquad \qquad a_i = (1-2p)(d-2i)
\end{align*} 
for $0 \leq i \leq d$ and
\begin{align*}
x_i = 4p(1-p)i(d-i+1), \qquad \qquad \varphi_i = 4p i (i-d-1)
\end{align*}
for $1 \leq i \leq d$.
\end{lemma}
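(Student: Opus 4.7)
The plan is to extract the parameter $p$ from the first split sequence $\lbrace \varphi_i \rbrace_{i=1}^d$ using Theorem \ref{thm:m5}, then derive the remaining quantities $a_i, b_i, c_i, x_i$ one at a time from the identities already recorded in Section 11.

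First, I would specialize Theorem \ref{thm:m5} to Krawtchouk type. Because $\theta_i = \theta^*_i = d-2i$, the two products $(\theta_{j-1}-\theta_{j-2})(\theta^*_{j-1}-\theta^*_{j-2})$ and $(\theta_{j-1}-\theta_j)(\theta^*_{j-1}-\theta^*_j)$ both equal $4$, so the right-hand side of Theorem \ref{thm:m5} vanishes. With $\beta+1=3$, what remains is the third-order linear recurrence $\varphi_{j-2} - 3\varphi_{j-1} + 3\varphi_j - \varphi_{j+1} = 0$ for $2 \leq j \leq d$, subject to the boundary conditions $\varphi_0 = 0$ and $\varphi_{d+1} = 0$. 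Its characteristic polynomial $(r-1)^3$ has $r=1$ as a triple root, so the general solution is a polynomial in $j$ of degree at most two. The condition $\varphi_0 = 0$ kills the constant term, and $\varphi_{d+1} = 0$ kills one more degree of freedom, leaving $\varphi_i = 4pi(i-d-1)$ for some unique scalar $p \in \mathbb F$. Since $\varphi_i \neq 0$ for $1 \leq i \leq d$ (as $\Phi$ is a Leonard system), we get $p \neq 0$; the exclusion $p \neq 1$ will drop out once $c_i$ is computed.

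The scalar $a_i$ then follows directly from Proposition \ref{prop:3termai}: using $\theta^*_i-\theta^*_{i-1}=-2$ and $\theta^*_i - \theta^*_{i+1} = 2$ one finds $a_i = (d-2i) + (\varphi_{i+1}-\varphi_i)/2$, and a short expansion gives $a_i = (1-2p)(d-2i)$; the endpoint cases $i=0,d$ are handled analogously via \eqref{eq:a0d}. For $b_i$, I would use the identity $b_i = \varphi_{i+1}\,\tau^*_i(\theta^*_i)/\tau^*_{i+1}(\theta^*_{i+1})$ from the formulas just before Theorem \ref{thm:m4}. A direct computation gives $\tau^*_i(\theta^*_i) = (-2)^i\, i!$, whence the ratio equals $-1/(2(i+1))$ and so $b_i = 2p(d-i)$. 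Using $c_i + a_i + b_i = \theta_0 = d$ yields $c_i = 2(1-p)i$, and now $p \neq 1$ follows from $c_i \neq 0$ for $1 \leq i \leq d$ (a defining feature of the Leonard tridiagonal representation). Finally, $x_i = c_i b_{i-1} = 4p(1-p)i(d-i+1)$, with the boundary conventions $c_0 = 0$ and $b_d = 0$ automatically satisfied.

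The main obstacle is purely mechanical: recognizing the triple root of the characteristic polynomial and solving the third-order recurrence for $\varphi_i$ with the two boundary conditions, then verifying $p \notin \lbrace 0,1\rbrace$ via the required nonvanishings of $\varphi_i, b_i, c_i$. Once the closed form for $\varphi_i$ is in hand, everything else is direct substitution into the identities established earlier in the paper.
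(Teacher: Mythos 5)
Your route is genuinely different from the paper's: the paper proves this lemma by a one-line citation to \cite[Example~20.11]{LSnotes} (with $\theta_0=d$, $\theta^*_0=d$, $s=s^*=-2$, $r=4p$), whereas you give a self-contained derivation from results already established in the text, extracting $p$ from the recurrence of Theorem \ref{thm:m5} and then reading off $a_i, b_i, c_i, x_i$ from Proposition \ref{prop:3termai}, the formula $b_i=\varphi_{i+1}\tau^*_i(\theta^*_i)/\tau^*_{i+1}(\theta^*_{i+1})$, the sum rule $c_i+a_i+b_i=\theta_0$, and $x_i=c_ib_{i-1}$. All of those downstream computations check out (e.g.\ $\tau^*_i(\theta^*_i)=(-2)^i\,i!$ and $\varphi_{i+1}-\varphi_i=-4p(d-2i)$ are correct), and the nonvanishing of $\varphi_i$ and $c_i$ does give $p\notin\lbrace 0,1\rbrace$. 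This is a nice internal consistency check that the citation does not provide.

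There is, however, one step that can fail. After reducing to ``$\varphi$ is a quadratic in $j$ vanishing at $j=0$,'' you use $\varphi_{d+1}=0$ to kill the remaining linear term. For a quadratic $f(j)=aj^2+bj$ the condition $f(d+1)=0$ reads $(d+1)\bigl(a(d+1)+b\bigr)=0$, which forces $b=-a(d+1)$ only when $d+1\neq 0$ in $\mathbb F$. The standing hypotheses allow ${\rm Char}(\mathbb F)=d+1$ whenever $d+1$ is an odd prime (e.g.\ $d=2$ over a field of characteristic $3$, where $\beta+1=3=0$ and the single recurrence equation becomes $0=0$); in that case the recurrence together with the two boundary conditions leaves a two-parameter family of quadratics, and your argument does not single out $\varphi_i=4pi(i-d-1)$. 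Such Leonard systems do exist, so the case is not vacuous. To close the gap you would need an additional relation among the $\varphi_i$ — for instance one of the identities in Theorem \ref{thm:m4} or Proposition \ref{prop:m4}, or the parameter-array relations from the Leonard system classification — or simply restrict to ${\rm Char}(\mathbb F)\nmid d+1$ and defer the exceptional case to the cited reference.
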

 \begin{proof} This is \cite[Example~20.11]{LSnotes} with 
 \begin{align*}
 \theta_0 = d, \qquad \theta^*_0=d, \qquad s=-2, \qquad s^*=-2, \qquad r=4p.
 \end{align*}
\end{proof}
\noindent 
It is routine to check that the equations in Theorems \ref{thm:TDaixi}, \ref{thm:m4}--\ref{thm:m5} and Propositions \ref{prop:3termai}, \ref{prop:m4} are
satisfied by the values given in Lemma \ref{thm:LSK}.

\section{Acknowledgements} The author thanks Kazumasa Nomura for reading the manuscript carefully and offering valuable comments.

\bigskip


\noindent Paul Terwilliger \hfil\break
\noindent Department of Mathematics \hfil\break
\noindent University of Wisconsin \hfil\break
\noindent 480 Lincoln Drive \hfil\break
\noindent Madison, WI 53706-1388 USA \hfil\break
\noindent email: {\tt terwilli@math.wisc.edu }\hfil\break

\section{Statements and Declarations}

\noindent {\bf Funding}: The author declares that no funds, grants, or other support were received during the preparation of this manuscript.
\medskip

\noindent  {\bf Competing interests}:  The author  has no relevant financial or non-financial interests to disclose.
\medskip

\noindent {\bf Data availability}: All data generated or analyzed during this study are included in this published article.

\end{document}